\newtheorem{theorem}{Theorem}[section]
\newtheorem{lemma}[theorem]{Lemma}
\newtheorem{proposition}[theorem]{Proposition}
\newtheorem{corollary}[theorem]{Corollary}
\theoremstyle{definition}
\newtheorem{definition}[theorem]{Definition}
\newtheorem{example}[theorem]{Example}
\newtheorem{remark}[theorem]{Remark}
\numberwithin{equation}{section}
   \DeclareMathOperator{\Sphere}{S}
\newcommand{\bc}{{\mathbb C}}
\newcommand{\bz}{{\mathbb Z}}
\begin{document}
\title [Parabolic representations of 2-bridge Links] {Symplectic quandles and parabolic representations of 2-bridge Knots and Links}
\author{Kyeonghee Jo and Hyuk Kim}
\subjclass[2010]{57M25, 57M27} \keywords{2-bridge links, parabolic representations, symplectic quandles}
  \address{Division of Liberal Arts and sciences,
  Mokpo National Maritime University, Mokpo, Chonnam, 530-729, Korea  }
\email{khjo@mmu.ac.kr}
\address{Department of Mathematical Sciences, Seoul National University, Seoul, 08826, Korea}
\email{hyukkim@snu.ac.kr}
\thanks {The second author was supported 
 by the National Research Foundation of Korea(NRF) grant funded by the Korea government(MSIT) (NRF-2018R1A2B6005691)}

\begin{abstract}
In this paper we study the  parabolic representations of 2-bridge links by finiding arc coloring vectors on the Conway diagram. The method we use is to convert the system of conjugation quandle equations to that of symplectic quandle equations. 
In this approach, 
we have an integer coefficient monic polynomial $P_K(u)$ for each 2-bridge link $K$, and each zero of this polynomial gives a set of arc coloring vectors on the diagram of $K$ satisfying  the system of symplectic quandle equations, which gives an explicit formula for a parabolic representation of $K$. We then explain how these arc coloring vectors give us the closed form formulas of the complex volume and the cusp shape of the representation. As other  applications of this method, we show some interesting arithmetic properties of the Riley polynomial and of the trace field, and also describe a necessary and sufficient condition for the existence of epimorphisms between 2-bridge link groups in terms of divisibility of the corresponding Riley polynomials.
\end{abstract}
\maketitle
\tableofcontents
\section{Introduction}
Since the volume conjecture connects the quantum invariants of a knot and the hyperbolic geometry of the knot complement, it has attracted a lot of attentions in the past two decades. (See for instance a recent book by Murakami and Yokota  \cite{MY}  for an introduction to the subject.) Also it was further generalized by Gukov, using $SL(2,\mathbb C)$ Chern-Simons theory, into the form of an asymptotic expansion of the colored Jones polynomial whose leading term is the complex volume and a subleading term is essentially the Reidemeister torsion \cite{G}. The real part of the complex volume is the hyperbolic volume and the imaginary part is the Chern-Simons invariant, which are very important invariants of hyperbolic 3-manifolds but in general difficult to compute. The simplicial formula of these invariants using ideal triangulations of hyperbolic 3-manifolds  was set by Neumann \cite{N}, and then more efficient way in the cusped case was given by Zickert using a relative version of the earlier theory \cite{Zickert}.

The method of Neumann and Zickert can be applied to the link complement, and a diagrammatic method using quandle homology was studied by Inoue and Kabaya \cite{IK}. And then Cho and Murakami introduced a combinatorial way of computing the complex volume motivated from the volume conjecture \cite{Cho2, Cho-M} following Yokota's work \cite{Yokota}. In fact they explicitly expressed the complex volume formula in the form of a state sum for a given link diagram, signifying its origin in quantum invariants, using region variables (or called “$w$-variables”), which can be obtained from quandle coloring vectors. This formula is by far the simplest way of describing the complex volume from a diagram of a link in a closed form formula in terms of region variables.  The notion of complex volume was defined for hyperbolic manifolds, but can be generalized to any   $SL(2,\mathbb C)$ representation and we are using this generalized version in this paper. (See \cite{Zickert} for the complex volume of a representation and \cite{GTZ} for the $SL(n,\mathbb C)$ case.) 
For actual computations of the volume, one has to solve a system of algebraic equations, which essentially corresponds to the hyperbolicity equations, to get the region variables \cite{KKY1}. But this system of equations is not easy to solve in general, and instead, one solves for the quandle equations to get the coloring vectors in $\mathbb C^2$, which then gives the region variables immediately by taking the determinant of the coloring vectors  with a generic fixed vector \cite{Cho2, Cho-M}. 
Therefore having an explicit volume formula is essentially the same as an explicit formula for the arc coloring vectors by Cho-Murakami's result, and we describe how to get all these for 2-bridge links in this paper.

Also this coloring vector is defined on each arc of the link diagram and is nothing but a short hand notation for a parabolic element in $SL(2,\mathbb C)$ (see Section 3), giving an explicit description of a parabolic representation of Wirtinger generators, and the volume above is the volume of this representation.  Once we have a $PSL(2,\mathbb C)$ representation, we have a pseudo-hyperbolic structure \cite{KKY1, ST} and can talk about hyperbolic invariants such as complex volume, and another such invariant is a cusp shape. This of course can be obtained by calculating the longitude algebraically, but can also be obtained using $w$-variables in the form of an  explicit state sum formula just like the complex volume. And hence the cusp shape of a  parabolic representation also can be obtained once we have a set of arc coloring vectors. (See Section 6.)

Therefore the problem of computing all these quantities boils down to computing arc coloring vectors, which essentially is to find an efficient way of solving the system of conjugation equations determined by Wirtinger relators. 
In general the computation for solving conjugation quandle is very complicated; even the cases for 8-crossing knots do not seem to be completely settled.
And one of the main purpose of this paper is to find an algorithm by ``linearizing'' the system of conjugation equations. If we inspect carefully, we can see that solving this conjugation equation reduces 
to solving  a symplectic quandle equation, which is semi-linear in the sense that linear in one variable and quadratic in the other variable at each crossing.
The notion of the symplectic quandle was  introduced in \cite{Nelson} and \cite{Yetter}, but the relation between conjugation and symplectic quandle  doesn't seem to be considered before.
This change gives us much advantage in carrying out the actual computations as well as conceptual approaches. In this quandle formulation, the determinant of two coloring vectors, called a symplectic form, appears naturally, and this plays an important role in this paper (and this also appears in other papers in different contexts) and will be called a “$u$-variable” in contrast to a $w$-variable.

When we apply the above argument to 2-bridge links, especially in the Conway form, we can solve the symplectic quandle equation and obtain an explicit formula for the arc coloring vectors in terms of $u:=\langle a,b\rangle:=\det(a,b)$, where $a$ and $b$ are two initial coloring vectors at the two bridges. It turns out that the solution $u$ is obtained as  zeros of one single integer coefficient monic polynomial $P(u)$, called the ``rep-polynomial'' in this paper following Riley \cite{Riley1}. Then the solution gives the arc coloring vectors and hence the region variables immediately, and then the complex volume from the formula of Cho-Murakami and the cusp shape from Kim-Kim-Yoon \cite{KKY2}. Here everything is concrete and explicit and can be given in an exact form. 

Needless to say, this polynomial $P(u)$ should be related to the well known Riley polynomial $R(y)$, and indeed it turns out that $\frac{1}{u}P(u)=\pm R(u^2)$. By deriving $P(u)$ from the symplectic quandle, we do not just recover the famous old result back and the coloring vectors, but also we found some interesting unknown arithmetic properties of the Riley polynomial and of the trace field: Namely splitting of the Riley polynomial for the knot case, $R(u^2)=\pm g(u)g(-u)$, for some integral coefficient polynomial $g(u)$ (this remarkable property reminds us Hirasawa-Murasugi conjecture \cite{HM2}) as well as the fact that the trace field is generated by $u$ instead of $y$. Note that $u$ is a square root of $y$.

One good point of using Conway form instead of Schubert form is to turn the diagram upside down to see another two generators of the knot group $G(K)$. We can get the corresponding rep-polynomial $P'(u)$ which has an equal right as $P(u)$, and this observation gives us an interesting application in the epimorphism problem.  In the knot theory, the epimorphism problem has been studied quite extensively  and for the 2-bridge knot case, the problem of characterizing the epimorphism pair, $G(K) \rightarrow G(K')$ in terms of Conway form is essentially settled down by  Ohtsuki-Riley-Sakuma \cite{ORS}
and Aimi-Lee-Sakai-Sakuma \cite{ALS}. (See also    \cite{Suzuki}.)  Recently Kitano and Morifuji proved the existence of an epimorphism when the Riley polynomial, (and hence the rep-ploynomial) of $K'$ divides that of $K$ \cite{KM}. And we show that the converse also holds if we allow both rep-polynomials $P(u)$ and $P'(u)$ in the divisibility, and show that we can generalize to a similar statement for the link case also. (See Section 7.)

The paper is organized as follows. We first setup the symplectic quandle equations and describe the arc coloring vectors as solutions of the equations in Sections 2 and 3, and then discuss $u$-variables and rep-polynomials in Section 4. Then as applications of this approach, we discuss the arithmetic properties mentioned above in Section 5, the complex volume and cusp shape in Section 6, and then the epimorphism problem in Section 7. In appendix we present some explicit formulas of the arc coloring vectors using Chebyshev polynomials.
\section{Preliminaries}
\subsection{2-bridge Knots and Links}
There are  two famous descriptions for 2-bridge knots or links, Scubert's normal form and Conway's  normal form. Together, a knot and link are called a link in this paper, unless we need to specify it.

\noindent {\bf Scubert's normal form} Each 2-bridge link has an associated pair of coprime integers $(\alpha,\beta)$ where $\alpha$ is positive and $\beta$ is an odd integer such that $-\alpha<\beta<\alpha$. We denote the knot or link by  $S(\alpha,\beta)$ and  call it $Schubert's$ $normal$ $form$.  
The followings are well-known facts (see \cite{BZ} or \cite{Kawauchi} for more details).
\begin{itemize}
    \item 
$S(\alpha,\beta)$ is a knot if $\alpha$ is odd and a 2-component link if $\alpha$ is even.
\item
The  mirror of $S(\alpha,\beta)$ is equivalent to $S(\alpha,-\beta)$.
\item
$S(\alpha,\beta)$ and $S(\alpha',\beta')$ are equivalent as  unoriented knots or  links if and only if 
$$\alpha=\alpha', \ \beta'\equiv \beta^{\pm1}\quad (\text{mod} \ \alpha).$$
\end{itemize}
Note that  if we consider a knot (or  a link) and its mirror as equivalent, $S(\alpha,\beta)$ and $S(\alpha',\beta')$ are equivalent knots or  links if and only if 
$$\alpha=\alpha', \ \beta'\equiv \pm \beta^{\pm1}\quad (\text{mod} \ \alpha). $$ 
\noindent {\bf Conway's normal form}
Each 2-bridge link has an associated sequence of non-zero integers $n_1,n_2,\cdots,n_k$, as indicated in  Figure \ref{u-poly},
where $|n_j|$ is the number of crossing contained in the $j$th block and its sign $\epsilon_j=\frac{n_j}{|n_j|}$  is defined as follows : $\epsilon_{2i+1}=1$ or $\epsilon_{2i}=-1$ if the twists of the block are right-handed and $\epsilon_{2i+1}=-1$ or $\epsilon_{2i}=1$ if they are left-handed. We denote the unoriented  2-bridge link with this regular projection by $C[n_1,n_2,\cdots,n_k]$, which is called $Conway's$ $normal$ $form$.  
For example, $C[3,2,3]$ is shown in Figure \ref{PA-Link-diagram}. (See \cite{Conway} or \cite{KL1}.)

Note that such a diagram of a 2-bridge link $C[n_1,n_2,\cdots,n_k]$ corresponds to a  $rational$ $tangle$ with $slope$
$$[n_1,n_2,\cdots,n_k]:=\frac{1}{n_1+\frac{1}{n_2+\frac{1}{\ddots\frac{1}{n_{k-1}+\frac{1}{n_k}}}}}.$$   

Let $\alpha (>0)$ and $\beta$ be coprime integers obtained by the slope of $C[n_1,n_2,\cdots,n_k]$, that is, $$\frac{\beta}{\alpha}=[n_1,n_2,\cdots,n_k].$$
Then it is well-known that  
$C[n_1,n_2,\cdots,n_k]$ is  equivalent to $S(\alpha,\beta)$. Furthermore 
each 2-bridge link $S(\alpha, \beta)$ in Schubert's normal form can be deformed into Conway's normal form $C[n_1,n_2,\cdots,n_k]$  uniquely, if we require that all the $n_i's$ are either positive or negative and $|n_k|\neq 1$.  We will call the unique Conway's normal form the $cannonical$  Conway's normal form of $S(\alpha, \beta)$. 


The following notation will also be  used in this paper : $$J(n_1,n_2,n_3,\cdots,n_k)=:C[n_1,-n_2,n_3,\cdots,(-1)^{k+1}n_k].$$ That is, $n_i$ is always positive whether $i$ is odd or not if the twists of the $i$-th block are right-handed, and negative if the twists are left-handed in $J(n_1,n_2,n_3,\cdots,n_k)$-notation.

Notice that $C[-n_1,-n_2,\cdots,-n_k]$ is  the mirror of $C[n_1,n_2,\cdots,n_k]$ and $$C[(-1)^{k+1}n_k,(-1)^{k+1}n_{k-1},\cdots,(-1)^{k+1}n_1]$$ is  the upside-down of $C[n_1,n_2,\cdots,n_k]$, i.e., the diagram obtained by a half rotation with respect to the horizontal axis. It is well-known that if $\frac{\beta}{\alpha}=[n_1,n_2,\cdots,n_k]$, then 
$$[(-1)^{k+1}n_k,(-1)^{k+1}n_{k-1},\cdots,(-1)^{k+1}n_1]=\frac{\beta'}{\alpha}, \,\, \beta\beta'\equiv 1 \,\,(\text{mod} \ \alpha).$$

\subsection{Riley polynomials of 2-bridge links} 

For a knot or link $K$, the fundamental group of the complement  is called the $knot$ $group$ or the $link$ $group$ and is denoted by $G(K)$. 

The knot group of  a 2-bridge knot $K=S(\alpha,\beta)$ always has a presentation of the form 
\begin{equation}\label{knot-group}
G(K)=\pi_1(\Sphere^3 \setminus K)=\langle a,b \,|\,wa=bw \rangle,
\end{equation}
where $w$ is of the form
\begin{equation}\label{presentation1}
w=a^{\epsilon_1}b^{\epsilon_2}a^{\epsilon_3}b^{\epsilon_4}\cdots a^{\epsilon_{\alpha-2}}b^{\epsilon_{\alpha-1}}
\end{equation}
with each $\epsilon_i=-(-1)^{[i\frac{\beta}{\alpha}]}$.\footnote{This definition coincides with Riley's definition in \cite{Riley1}. } Note that  $\epsilon_i=\epsilon_{\alpha-i}$ and the generators $a$ and $b$ come from the two bridges and represent the meridians.

Suppose that $\rho : G(K)  \rightarrow SL(2,\bc)$ is a non-abelian $parabolic$ representation, i.e., the trace of $\rho$-image of any meridian is 2. Then after conjugating if necessary, we may assume 
\begin{equation}\label{parabolic-rep}
\rho(a)=\begin{pmatrix}
1& 1\\
0 & 1 
 \end{pmatrix} \quad \text{and}\quad
\rho(b)=\begin{pmatrix}
1& 0\\
-y & 1 
 \end{pmatrix}.
\end{equation}
 Riley had shown in \cite{Riley1} that $y$ determines a non-abelian parabolic representation if and only if  $W_{11}=0$. Here $W_{ij}$ is the ($i,j$)-element of 
\begin{equation}\label{parabolic-rep2}
\begin{split}
W&=\rho(w)=\rho(a)^{\epsilon_1}\rho(b)^{\epsilon_2}\rho(a)^{\epsilon_3}\rho(b)^{\epsilon_4}\cdots \rho(a)^{\epsilon_{\alpha-2}}\rho((b)^{\epsilon_{\alpha-1}}\\
&=\begin{pmatrix}
1& 1\\
0 & 1 
 \end{pmatrix}^{\epsilon_1}\begin{pmatrix}
1& 0\\
-y & 1 
 \end{pmatrix}^{\epsilon_2}\begin{pmatrix}
1& 1\\
0 & 1 
 \end{pmatrix}^{\epsilon_3}\cdots \begin{pmatrix}
1& 1\\
0 & 1 
 \end{pmatrix}^{\epsilon_{\alpha-2}}\begin{pmatrix}
1& 0\\
-y & 1 
 \end{pmatrix}^{\epsilon_{\alpha-1}},
\end{split}
\end{equation}
for $i,j=1,2$.
Furthermore $W_{11}$ has no repeated roots and the non-abelian parabolic representations bijectively correspond to the roots of the polynomial $W_{11}\in \bz[y]$, the $Riley$ $polynomial$. Note that the degree of $W_{11}$ is $\frac{1}{2}(\alpha-1)$.

For  the case of 2-bridge link $K=S(\alpha,\beta)$,
\begin{equation}\label{link-group}
G(K)=\langle a,b \,|\,wb=bw, w^*a=aw^* \rangle,
\end{equation}
where $w$ is given as
\begin{equation}\label{presentation2}
w=a^{\epsilon_1}b^{\epsilon_2}a^{\epsilon_3}b^{\epsilon_4}\cdots b^{\epsilon_{\alpha-2}}a^{\epsilon_{\alpha-1}}
\end{equation}
and
$w^*$ as
\begin{equation}\label{presentation3}
w^*=b^{\epsilon_1}a^{\epsilon_2}b^{\epsilon_3}a^{\epsilon_4}\cdots a^{\epsilon_{\alpha-2}}b^{\epsilon_{\alpha-1}}.
\end{equation}
 Then by following Riley's argument about the knot cases, one can prove that  $W_{12}=-\frac{1}{y}W_{21}^*$, and the non-zero roots of $W_{12}$ correspond to non-abelian parabolic representations of $K$ from
$\rho(w)\rho(b)=\rho(b)\rho(w)$ and $\rho(w^*)\rho(a)=\rho(a)\rho(w^*)$. 
For this reason, we will call the polynomial $W_{12}\in \bz[y]$ the $Riley$ $polynomial$ of a link $K$ in this paper.
Note that the degree of $W_{12}\in \bz[y]$  is $\frac{1}{2}(\alpha-2)$.

We use the notation $\mathcal{R}(y)$ to denote the Riley polynomial so that $\mathcal{R}(y)=W_{11}(y)$ if $K$ is a knot and  $\mathcal{R}(y)=W_{12}(y)$ if $K$ is a link. 
See also section \ref{na-rep}.

\subsection{Chebyshev polynomials}

$Chebyshev$ $polynomials$, which are defined by a three-term recursion
$$g_{n+1}(t)=tg_n(t)-g_{n-1}(t),$$ can be used to describe some properties and characteristics of knots and links. We denote the Chebyshev polynomials $g_n(t)$ with the intial condition 
$g_0(t)=a, g_1(t)=b$ by $Ch_n^t(a,b)$, which clearly depends on the initial condition linearly.  And the following properties are also obvious:
\begin{equation*}
\begin{split}
Ch_n^t(a,b)&=Ch_{n-k}^t(Ch_k^t(a,b),Ch_{k+1}^t(a,b))\\
Ch_n^t(a,b)&=aCh_n^t(1,0)+bCh_n^t(0,1)
\end{split}
\end{equation*}
If we denote a particular Chebyshev polynomials $Ch_n^t(0,1)$ by $p_n(t)$, then 
$$Ch_n^t(1,0)=Ch_{n-1}^t(0,-1)=-Ch_{n-1}^t(0,1)=-p_{n-1}(t),$$

 and arbitrary Chebyshev polynomials are expressed as  linear combinations of $p_{n-1}(t)$ and   $p_n(t)$ as follows :
 $$Ch_n^t(a,b)=-ap_{n-1}(t)+bp_n(t)$$


Sometimes we will call such Chebyshev polynomials 
$t$-$Chebyshev$ $polynomials$ 
when it is needed to specify the variable $t$. 
The following Chebyshev polynomials 
are frequently used:
\begin{equation*}
\begin{split}
f_n(t)&:=p_{n+1}(t)-p_n(t)=Ch_{n+1}^t(1,1)=Ch_n^t(1,t-1)\\
v_n(t)&:=p_{n+1}(t)-p_{n-1}(t)=Ch_n^t(2,t)\\
\end{split}
\end{equation*}
The following properties for Chebyshev polynomials are well-known or easily proved. See \cite{Hsiao, Rivlin, Yamagishi} for references.
\begin{lemma}\label{cheby-prop}
 Let $p_n, f_n, v_n$ be as above. Then  
\begin{enumerate}
\item [\rm (i)] $p_{-n}(t)=-p_n(t),  \quad p_n(-t)=(-1)^{n+1}p_n(t)$
\item [\rm (ii)] $p_{n}(2)=n,  \quad p_{2n}(0)=0, \quad p_{2n+1}(0)=(-1)^n$
\item [\rm (iii)] $p_{2n+1}(t)=p_n(v_{2}(t))+p_{n+1}(v_{2}(t))=p_{n+1}(t)^2-p_{n}(t)^2$
\item [\rm (iv)] $p_{n}(t)^2-p_{n-1}(t)p_{n+1}(t)=1$
\item [\rm (v)] $p_{n+1}(t)^2-tp_{n}(t)p_{n+1}(t)+p_{n}(t)^2=1$
\item [\rm (vi)] $p_{nm}(t)=p_{n}(v_m(t))p_m(t)$
\item [\rm (vii)] $(-1)^nf_{n}(-t)=p_n(t)+p_{n+1}(t)$
\item [\rm (viii)] $f_{n}(-v_{2k}(t))=f_{n}(v_k(t))f_{n}(-v_k(t))$
\end{enumerate}
\end{lemma}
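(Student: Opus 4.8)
The plan is to reduce every one of the eight identities to an elementary manipulation of Laurent polynomials by means of a single uniformizing substitution. Writing $t=\lambda+\lambda^{-1}$, I would first check, by induction on the defining recursion $g_{n+1}=tg_n-g_{n-1}$, the closed forms
\[
p_n(t)=\frac{\lambda^{n}-\lambda^{-n}}{\lambda-\lambda^{-1}},\qquad v_n(t)=\lambda^{n}+\lambda^{-n}.
\]
Both are immediate: the right-hand sides satisfy the recursion by the one-line telescoping $(\lambda+\lambda^{-1})\lambda^{\pm n}=\lambda^{\pm n+1}+\lambda^{\pm n-1}$, and they match the initial data $p_0=0,\,p_1=1$ and $v_0=2,\,v_1=t$. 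Since the claims are polynomial identities in $t$ and the values $t=\lambda+\lambda^{-1}$ range over an infinite set as $\lambda$ varies, any Laurent-polynomial identity in $\lambda$ that I verify upgrades automatically to the asserted identity in $\bz[t]$.

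With this dictionary most items are one-liners. For (i), $p_{-n}=-p_n$ is the antisymmetry of $\lambda^{n}-\lambda^{-n}$, and $\lambda\mapsto-\lambda$ sends $t\mapsto-t$ and produces the factor $(-1)^{n+1}$. Items (ii) follow by specializing $\lambda$: letting $\lambda\to1$ and evaluating the resulting $0/0$ limit gives $p_n(2)=n$, while $\lambda=i$ gives $p_{2n}(0)=0$ and $p_{2n+1}(0)=(-1)^n$. Items (iii)--(v) become cancellations after expanding; for instance (iv) is exactly the Laurent identity
\[
(\lambda^n-\lambda^{-n})^2-(\lambda^{n-1}-\lambda^{-n+1})(\lambda^{n+1}-\lambda^{-n-1})=(\lambda-\lambda^{-1})^2,
\]
so dividing by $(\lambda-\lambda^{-1})^2$ yields $1$; (v) then follows purely algebraically from (iv) via $tp_n=p_{n+1}+p_{n-1}$, with no further substitution, and both forms of (iii) collapse the same way. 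For (vii) I would simply combine $f_n=p_{n+1}-p_n$ with part (i).

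The two composition identities (vi) and (viii) are the only places requiring care, and (viii) is what I expect to be the main obstacle. The key observation making both tractable is that $v_m(t)=\lambda^{m}+\lambda^{-m}$ is itself of the form $\Lambda+\Lambda^{-1}$ with $\Lambda=\lambda^{m}$, so the closed form nests: $p_n(v_m(t))=(\lambda^{mn}-\lambda^{-mn})/(\lambda^{m}-\lambda^{-m})$, and multiplying by $p_m(t)$ telescopes to $p_{mn}(t)$, proving (vi). For (viii) I would first record, using (vii), closed forms for $f_n$ at $\pm v_k$, noting that $-v_k(t)$ corresponds to replacing $\Lambda=\lambda^{k}$ by $-\Lambda$; this yields one "even/cosine-type" and one "odd/sine-type" combination of $\lambda^{\pm k}$ whose product, after the doubling $\Lambda^{2}=\lambda^{2k}$, collapses exactly to $f_n(-v_{2k})$. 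The one delicate point is the bookkeeping of the $(-1)^n$ signs generated by the substitution $\Lambda\mapsto-\Lambda$ at each of the three evaluations; everything else is routine telescoping. As a safeguard I would verify the degrees and the small case $n=k=1$ directly before trusting the sign computation.
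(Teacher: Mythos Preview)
Your approach is correct and complete. The uniformizing substitution $t=\lambda+\lambda^{-1}$ with the closed forms $p_n(t)=(\lambda^n-\lambda^{-n})/(\lambda-\lambda^{-1})$ and $v_n(t)=\lambda^n+\lambda^{-n}$ is the standard way to handle such identities, and each of (i)--(viii) reduces, as you describe, to a short Laurent-polynomial computation; in particular your handling of (viii) via $\Lambda=-\mu^2$ with $\mu=\lambda^k$ goes through cleanly and the $(-1)^n$ bookkeeping checks out.

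There is nothing to compare against: the paper does not prove this lemma but simply declares the identities ``well-known or easily proved'' and cites \cite{Hsiao, Rivlin, Yamagishi}. Your argument supplies exactly the kind of self-contained verification the paper omits, and the exponential-substitution method is in fact what one finds in those references. One small remark: for (ii) you can avoid the $0/0$ limit at $\lambda=1$ by instead checking $p_n(2)=n$ directly from the recursion $p_{n+1}(2)=2p_n(2)-p_{n-1}(2)$ with $p_0(2)=0$, $p_1(2)=1$; this keeps the whole argument purely algebraic in $\bz[t]$ without any limiting procedure.
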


\begin{lemma} \label{SL-Chebyshev}
Let $M\in SL(2,\bc)$ and $t=tr M$ . Then  
\begin{enumerate}
\item [\rm (i)] $M^n=Ch_n^t(I,M)=p_n(t)M-p_{n-1}(t)I$
\item [\rm (ii)] $tr(M^n)=v_n(t)$
\end{enumerate}
\end{lemma}

\subsection{Quandle}
\begin{definition}
A set $X$ that has a binary operation $\rhd : X \times X \rightarrow X$ is called a $quandle$ if the following three axioms hold:
\begin{enumerate}
\item [\rm (i)]  for any $a \in X$, $a\rhd a=a$;
\item [\rm (ii)]  for each $a, b \in X$, there is a unique $ c \in X$ such that $c \rhd b=a$;
\item [\rm (iii)]  for each $a, b,c \in X$, $(a\rhd b) \rhd c=(a \rhd c)\rhd (b \rhd c)$.
\end{enumerate}
\end{definition}
Axiom (ii) implies that quandle operation $\rhd$ has a $dual$ quandle operation $\rhd ^{-1}$ such that $c=a \rhd ^{-1} b $. Note that  
$$(a \rhd ^{-1} b)\rhd b=a \quad \text{and} \quad (a \rhd  b)\rhd^{-1} b=a, $$ and these two operations distribute over each other. See \cite{Carter, Joyce, Yetter}. 

Any group $G$ is a quandle with respect to the operation $a\rhd b =b^n a b^{-n}, a,b\in G$  for any integer $n$.
 For any oriented link diagram $K$, there is a quandle $Q(K)$ defined by a Wirtinger-style presentation with one generator for each arc and a relation at each crossing: Let $\mathcal{R}(K)$ be the set of over-arcs of $K$ with an orientation. Then for each crossing, we have three elements $\alpha,\beta$ and $\gamma$ of $\mathcal{R}(K)$ and the knot quandle relation between them, $\gamma=\alpha \rhd \beta$, is defined as in  Figure \ref{KnotQundle}. This  quandle is called a $knot$ $quandle$ and it is known that this quandle is a classifying invariant of knots and unsplit links in $\Sphere^3$ \cite{Joyce,Nelson}. Note that quandle operation is invariant under the  Reidemeister moves by the quandle axioms.

\begin{figure}[hbt]
\begin{center}
\scalebox{0.3}{\includegraphics{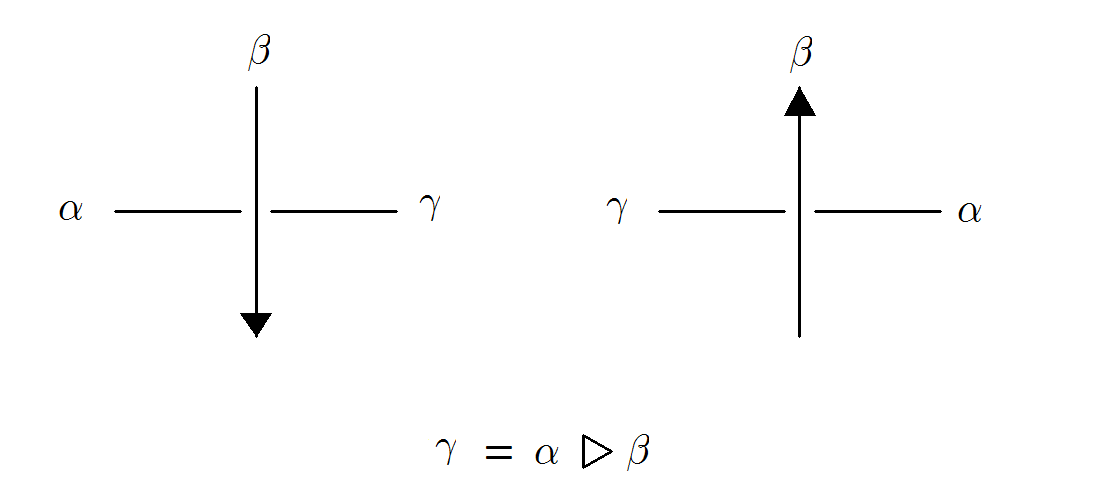}}
\end{center}
\caption{knot quandle relation}\label{KnotQundle}
\end{figure}

\begin{definition} Let $X$ be a quandle, $K$ an oriented knot or link diagram. A $quandle$ $coloring$ $\mathcal{C}$ on $K$ is a map $\mathcal{C} : \mathcal{R}(K) \longrightarrow X$ such that $$\mathcal{C}(\gamma)=\mathcal{C}(\alpha)\rhd \mathcal{C}(\beta)$$ holds  at each crossing with arcs $\alpha,\beta$ and $\gamma=\alpha \rhd \beta$. We will say that $K$ is colored by $X$ or $K$ has a $X$-coloring when there is such a quandle coloring.
\end{definition}

\subsection{Symplectic quandle}
\begin{definition} Let $X$ be a finite dimensional free module over a commutative ring with identity and a non-degenerate anti-symmetric bilinear form $\langle , \rangle$. Then  ($X$,$\langle , \rangle$) is a quandle with the quandle operation 
$$x\rhd y=x+\langle x,y \rangle y  \quad \text{and} \quad  x\rhd^{-1} y=x-\langle x,y \rangle y.$$
This type of quandle is called a $symplectic$ $quandle$. See \cite{Nelson} for more details.
\end{definition}
\section{Symplectic quandle structure on the set of parabolic elements in $SL(2,\bc)$}

\subsection{Symplectic quandle structure on $\bc^2$}
 Let $\langle , \rangle$ be a symplectic form on $\bc^2$ defined by  
$$\langle x,y \rangle=\begin{vmatrix}
x_1 & y_1\\
x_2 & y_2 
 \end{vmatrix}=x_1y_2-x_2y_1$$
for $ x=\begin{pmatrix}
x_1\\
  x_2
 \end{pmatrix},\, y=\begin{pmatrix}
y_1\\
  y_2
  \end{pmatrix}.$ 
Then $(\bc^2, \langle , \rangle)$ is a symplectic quandle with the quandle operation 
$$x\rhd y=x+\langle x,y \rangle y.$$

The symplectic quandle structure on $\bc^2$ induces a quandle structure on the space of orbits of the action of multiplicative group $\{1,-1\}$ on $\bc^2$ by scalar multilication, because negating $x$ negates $x\rhd y$ and $x\rhd^{-1} y$, while  negating $y$ leaves them unchanged. We will denote the orbit space by $\mathfrak{C}=\bc^2/{\pm 1}$ and 
call this quandle $(\mathfrak{C},\langle , \rangle)$ a ($reduced$) $symplectic$ $quandle$.


\subsection{Set of parabolic elements in $SL(2,\bc)$}
We denote the set of parabolic elements of  $SL(2,\bc)$ by $\mathcal{P}$, that is,
$\mathcal{P}=\{A\in SL(2,\bc)\mid tr(A)= 2 \}.$

From the fact that every parabolic element is conjugate to the particular element
 $\begin{pmatrix}
1 & 1\\
  0 & 1 
 \end{pmatrix}$, we get the following identities   
 \begin{equation*}
\begin{split}
A=\begin{pmatrix}
a_{11}& a_{12}\\
a_{21} & a_{22} 
 \end{pmatrix}
\begin{pmatrix}
1 & 1\\
  0 & 1 
 \end{pmatrix}
\begin{pmatrix}
a_{11}& a_{12}\\
a_{21} & a_{22} 
 \end{pmatrix}^{-1}
&=\begin{pmatrix}
1-a_{11}a_{21}& a_{11}^2\\
-a_{21}^2 & 1+a_{11}a_{21} 
 \end{pmatrix}\\
&=I+\begin{pmatrix}
a_{11}\\
a_{21}  
 \end{pmatrix}
(-a_{21},a_{11})=I+\begin{pmatrix}
-a_{11}\\
-a_{21}  
 \end{pmatrix}
(a_{21},-a_{11})
\end{split}
\end{equation*}
and 
$$A^{-1}=I-\begin{pmatrix}
a_{11}\\
a_{21}  
 \end{pmatrix}
(-a_{21},a_{11})=I-\begin{pmatrix}
-a_{11}\\
-a_{21}  
 \end{pmatrix}
(a_{21},-a_{11}).$$
This gives a bijection $T$ from  $\mathcal{P}$ to  $\mathfrak{C}$ such that 
$$T(A)=\left[\begin{array}{c}
a_{11}\\
a_{21}  
 \end{array}\right]:= [a],$$ 
and this map sends $A^{-1}\in \mathcal{P}$ to  $[ia]\in \mathfrak{C} $, that is, $T(A^{-1})=[ia]$. If we denote $(-a_{21},a_{11})$ by $\hat{a}$, then $\hat{a}b=\langle a,b\rangle$ and  $A$ and $A^{-1}$ can be expressed as
$$A=I+a\hat{a},\quad A^{-1}=I-a\hat{a}=I+(ia)(\widehat{ia}).$$

The following proposition shows that $T$ defines an isomorphism between the quandle $(\mathcal{P}, \text{conjugation})$ and the symplectic quandle $(\mathfrak{C},\langle, \rangle) $.
\begin{proposition}
	If $T(A)=[a]$ and  $T(B)=[b]$ then 
	$$T(B^{-1}AB)=[a+\langle a,b \rangle b]\in \mathfrak{C}$$
	and
	$$T(BAB^{-1})=[a-\langle a,b \rangle b]\in \mathfrak{C}.$$
\end{proposition}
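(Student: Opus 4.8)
The plan is to work directly with the matrix expressions $A = I + a\hat{a}$ and $B = I + b\hat{b}$ recorded just before the proposition, so that $B^{-1} = I - b\hat{b}$, and to verify the two claimed identities as genuine equalities of $SL(2,\bc)$-matrices rather than merely up to sign. Since $a\hat{a}$ is unchanged when $a$ is replaced by $-a$, any such matrix equality descends correctly to the orbit space $\mathfrak{C}$, so I need not worry about the choice of representative when passing to $[\,\cdot\,]$.

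The computation rests on four bookkeeping facts, each immediate from the definitions $\hat{a} = (-a_{21}, a_{11})$ and $\langle x,y\rangle = x_1 y_2 - x_2 y_1$: the map $a \mapsto \hat{a}$ is linear; $\hat{a}b = \langle a,b\rangle$ is a scalar; $\hat{a}a = \langle a,a\rangle = 0$ by antisymmetry; and $\hat{b}a = \langle b,a\rangle = -\langle a,b\rangle$. First I would expand $B^{-1}AB = (I - b\hat{b})(I + a\hat{a})(I + b\hat{b})$. The crucial moves are that scalars such as $\hat{a}b$ and $\hat{b}a$ can be pulled out of products, so that an apparently quartic term like $b\hat{b}a\hat{a}$ collapses to $-\langle a,b\rangle\, b\hat{a}$, and that $b\hat{b}b\hat{b} = 0$ because $\hat{b}b = 0$. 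After cancelling the two $\pm b\hat{b}$ terms, I expect to be left with
$$B^{-1}AB = I + a\hat{a} + \langle a,b\rangle\, a\hat{b} + \langle a,b\rangle\, b\hat{a} + \langle a,b\rangle^2\, b\hat{b}.$$

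To match this against the claim, I would set $c = a + \langle a,b\rangle b$ and compute $c\hat{c}$. Because the hat map is linear, $\hat{c} = \hat{a} + \langle a,b\rangle \hat{b}$, and expanding $c\hat{c}$ reproduces exactly the four terms above; hence $B^{-1}AB = I + c\hat{c}$, giving $T(B^{-1}AB) = [c] = [a + \langle a,b\rangle b]$. For the second identity I would avoid repeating the expansion and instead apply the first identity with $B$ replaced by $B^{-1}$: using $T(B^{-1}) = [ib]$ from the preceding discussion together with bilinearity $\langle a, ib\rangle = i\langle a,b\rangle$, the correction term becomes $\langle a, ib\rangle(ib) = i^2 \langle a,b\rangle b = -\langle a,b\rangle b$, yielding $T(BAB^{-1}) = [a - \langle a,b\rangle b]$ at once.

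The only real obstacle is disciplined sign- and scalar-tracking during the expansion, in particular remembering that $\hat{a}$ is a row vector so that $\hat{a}b$ and $\hat{b}a$ are scalars of opposite sign that commute past the remaining vectors, and that the self-contractions $\hat{a}a$ and $\hat{b}b$ vanish. No conceptual difficulty arises; once these contraction rules are in place the two sides agree term by term.
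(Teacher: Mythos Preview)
Your proposal is correct and follows essentially the same route as the paper: both expand $(I-b\hat{b})(I+a\hat{a})(I+b\hat{b})$ using the scalar contractions $\hat{a}b=\langle a,b\rangle$ and $\hat{b}b=0$, recognize the result as $I+c\hat{c}$ with $c=a+\langle a,b\rangle b$ via linearity of the hat map, and then derive the second identity by replacing $b$ with $ib$ rather than repeating the expansion.
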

\begin{proof}
	The first identity follows from
\begin{equation*}
	\begin{split}
	B^{-1}AB&=(I-b\hat{b})(I+a\hat{a})(I+b\hat{b})\\
	&=(I-b\hat{b}+a\hat{a}-b\hat{b}a\hat{a})(I+b\hat{b})\\
	&=(I-b\hat{b}+a\hat{a}-\langle b,a\rangle b\hat{a})(I+b\hat{b})\\
	&=I+a\hat{a}-\langle b,a\rangle b\hat{a}+\langle a,b\rangle a\hat{b}+\langle a,b\rangle ^2b\hat{b}\\
	&=I+a\hat{a}+\langle a,b\rangle (a\hat{b}+b\hat{a})+\langle a,b\rangle ^2b\hat{b}\\
	&=I+(a+\langle a,b\rangle b)(\hat{a}+\langle a,b\rangle \hat{b})\\
	&=I+c\hat{c}
	\end{split}
\end{equation*}	
where $c=a+\langle a,b\rangle b$ by obvious linearity of $\,\,\,\hat{ }\,\,$. The second identity is similarly proved, or it can be proved using the first identity as follows.
\begin{equation*}
	\begin{split}
		BAB^{-1}&=I+d\hat{d}, \quad  d=a+\langle a,ib\rangle ib=a-\langle a,b\rangle b\\
			\end{split}
\end{equation*}	
\end{proof}
\begin{remark}
It holds true that $a\rhd b=B^{-1}a$ for any $a,b\in \mathbb C^2$ with  $T(B)=[b]$, since
\begin{equation*}
B^{-1}a=(I-b\hat{b})a=a-b\hat{b}a=a-b\langle b,a\rangle=a+\langle a,b\rangle b=a\rhd b.
\end{equation*}
Also note that 
\begin{equation}\label{3-1}
\begin{split}
tr(AB)&=tr((I+a\hat{a})(I+b\hat{b}))\\
&=tr(I)+tr(a\hat{a})+tr(b\hat{b})+tr(a\hat{a}b\hat{b})\\
&=tr(I)+0+0+tr(\langle a,b\rangle a\hat{b})\\
&=2-\langle a,b\rangle^2  \end{split}, 
\end{equation}
and hence $\langle a,b\rangle$ tells us about $tr(AB)$.
\end{remark}
\section{Parabolic representations of 2-bridge knots and links}\label{BPrep-2brKnots}
In this section, we investigate the set of parabolic representations of a 2-bridge link $K$ using its conway normal form $J(n_1 ,\cdots,n_k)=C[n_1,-n_2, ,\cdots,(-1)^{k+1}n_k]$ and the symplectic quandle structure on $\mathcal{P}$ described in the previous section. 
Throughout the paper, we will write $K=J(n_1 ,\cdots,n_k)$ whenever we consider the link diagram $J(n_1 ,\cdots,n_k)$ for a link $K$.

Fix a Conway expansion $J(n_1 ,\cdots,n_k)$ of $K$ and its orientation.  Then each parabolic representation corresponds to a $\mathfrak{C}$-coloring  and vice versa. To get a $\mathfrak{C}$-coloring, we start from two vectors $a_{1,0}, b_{1,0}$ of  $\bc^2$ and obtain two $(i,j)$-vectors, $a_{i,j},b_{i,j}$ for all $i=1,\cdots,  k$ and 
$$j=0,1,2,\cdots, n_i \ \text{if} \ n_i>0 \quad \text{and} \quad
j=0,-1,-2,\cdots, n_i \ \text{if} \ n_i<0,$$
 which are the  $|j|$-th vectors of  the $i$-th block in the diagram of $K$, consecutively obtained  by performing the quandle operation of $\mathfrak{C}$ at every crossing while descending down. The last two vectors of each $i$-th block, $a_{i,n_i},b_{i,n_i}$, will be denoted by $ a_{i,f},b_{i,f}$ sometimes for our convenience. (See Figure \ref{KeyLemma}.)
 
At each crossing we will take ``$-$" sign for a new vector, that is, 
\begin{equation}\label{quandle-eq1}
a'=b, \quad b'   = -(a\rhd b)= -a-\langle a,b\rangle b
\end{equation} 
when the crossing is given as in the left-hand side of Figure \ref{QuandleAction}, and 
\begin{equation}\label{quandle-eq2}
a'= b, \quad b'=-(a\rhd^{-1} b)=-a +\langle a,b \rangle b
\end{equation}
when the crossing is given as in the right-hand side  of Figure \ref{QuandleAction}. 
\begin{figure}[hbt]
\begin{center}
\scalebox{0.25}{\includegraphics{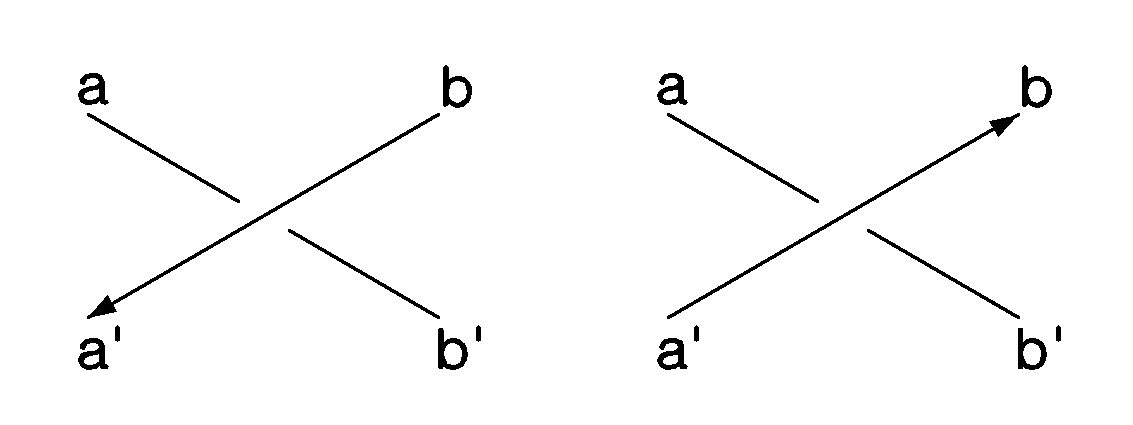}}
\end{center}
\caption{  }\label{QuandleAction}
\end{figure}
Notice that our choice of ``$-$" sign is to have a consistent determinant 
 $\langle a',b' \rangle=\langle a,b \rangle$ for each block and thus we get
\begin{equation}\label{quandle-block}
u_i=\langle a_{i,0},b_{i,0}\rangle=\cdots=\langle a_{i,j},b_{i,j}\rangle=\cdots=\langle a_{i,f},b_{i,f}\rangle
\end{equation}
for each $i$, and $u_i$ will be called the {\it determinant of $i$-th block}.

If we let $\langle a,b \rangle =u$ and $X(u)=\begin{pmatrix}
0 & -1\\
1 & -u 
\end{pmatrix}$, then Equation (\ref{quandle-eq1}) and  Equation (\ref{quandle-eq2}) can be expressed as 
\begin{equation*}
	(a',b')=(a,b)\begin{pmatrix}
		0 & -1\\
		1 & -u 
	\end{pmatrix}=(a,b)X(u) \quad \text{and}\quad
	(a',b')=(a,b)\begin{pmatrix}
		0 & -1\\
		1 & u 
	\end{pmatrix}=(a,b)X(-u),
\end{equation*}
respectively. Therefore  we get from (\ref{quandle-block}) that for each $i$
\begin{equation*}
	(a_{i,j+\delta_i'},b_{i,j+\delta_i'})=(a_{i,j}, b_{i,j})X(\delta_i u_i)^{\delta_i'},
\end{equation*}
where 
$\delta_i=1$ if the orientation of the crossing is downward and  $\delta_i=-1$ if the orientation of the crossing is upward, and $\delta_i'=1$ if $n_i>0$  and  $\delta_i'=-1$ if $n_i<0$ (see Figure \ref{Q-Actions}).
\begin{figure}[hbt]
	\begin{center}
		\scalebox{0.6}{\includegraphics{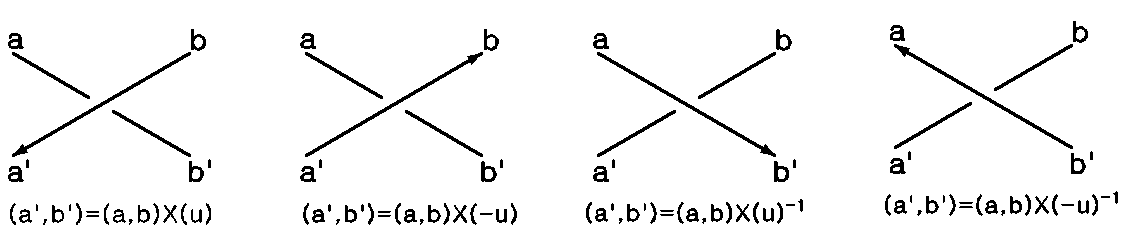}}
	\end{center}
	\caption{quandle action when $u=\langle a,b \rangle$}\label{Q-Actions}
\end{figure}

Since the presentation of the knot group (or link group) of $K$ is generated by two  meridians which are conjugate each other,  
and the meridians correspond to initial two vectors $a_{1,0}$ and $b_{1,0}$, there are 3 cases. The first case is that the representation is trivial, that is, in this case $$a_{1,0}=\begin{pmatrix}
0\\
0  
 \end{pmatrix}=b_{1,0},$$ 
which corresponds to  
$\begin{pmatrix}
1& 0\\
0 & 1 
 \end{pmatrix} \in SL(2,\bc)$.
The second case is that  the representation is a non-trivial abelian representation. In this case,  we can normalize the meridians up to conjugate so that 
 $$a_{1,0}=\begin{pmatrix}
1\\
0  
 \end{pmatrix}, b_{1,0}=\begin{pmatrix}
v\\
0  
 \end{pmatrix},$$ 
which corresponds respectively  to  
$\begin{pmatrix}
1& 1\\
0 & 1 
 \end{pmatrix}$ and 
$\begin{pmatrix}
1& v^2\\
0 & 1 
 \end{pmatrix}
$ in $SL(2,\bc)$. 
The last case is  that the representation is a non-abelian representation. In this case, we can normalize the meridians up to conjugate so that 
\begin{equation}\label{normalize-nonabelian}
a_{1,0}=\begin{pmatrix}
1\\
0  
 \end{pmatrix}, b_{1,0}=\begin{pmatrix}
0\\
u  
 \end{pmatrix},  u\neq 0,
 \end{equation}
 which corresponds respectively to  
$\begin{pmatrix}
1& 1\\
0 & 1 
 \end{pmatrix}$ and 
$\begin{pmatrix}
1& 0\\
-u^2& 1
 \end{pmatrix}$  
 in $SL(2,\bc)$. 
Here $u^2=y$ in (\ref{parabolic-rep}).

Note that the first and second cases are when $u=u_1=\langle a_{1,0},b_{1,0}\rangle=0$
and the third case is when $u=u_1=\langle a_{1,0},b_{1,0}\rangle \neq 0$. Note that the second case with $v\neq 1$ is possible  only for links. 
 Also, there is a parabolic representation  
 $\rho : G(K)  \rightarrow SL(2,\mathbb C)$ with $tr(\rho (ab))=2-u^2$
  if there is  a  $\mathfrak{C}$-coloring on $K$ with $\langle T(\rho(a)),T(\rho(b))\rangle=\pm u$ from (\ref{3-1}). 

\subsection{Key lemmas} 
In this section, we assume that the orientation of $K=J(n_1,\cdots,n_k)$ is given.
\begin{figure}[hbt]
\begin{center}
\scalebox{0.7}{\includegraphics{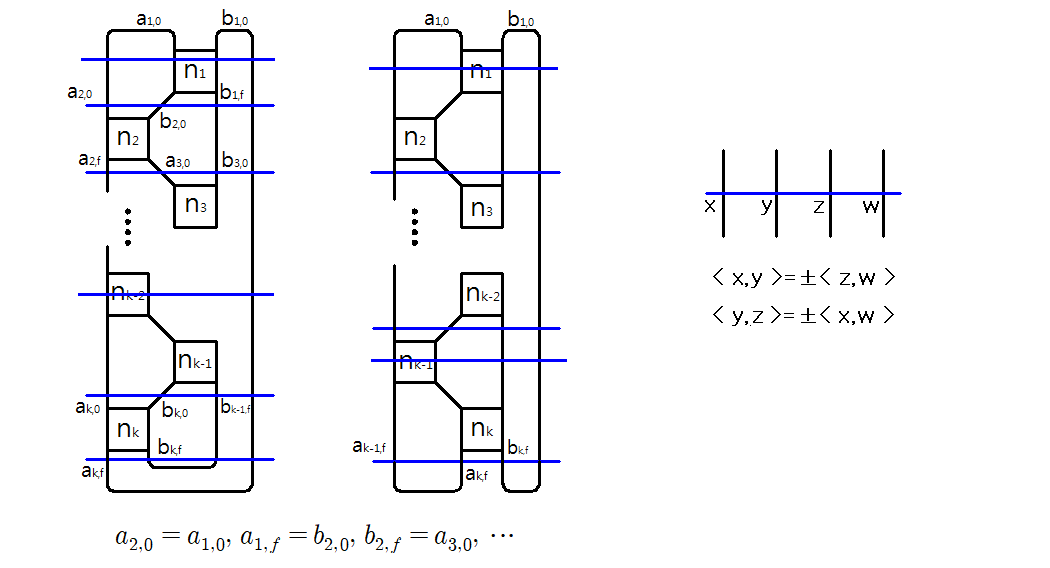}}
\end{center}
\caption{  }\label{KeyLemma}
\end{figure}
\begin{lemma}\label{KeyL}Let $x,y,z,w$ are vectors in $\mathbb C^2$ which sequentially correspond to the 4 arc vectors intersecting an arbitrary horizontal line.(See Figure \ref{KeyLemma}.) Then 
\begin{equation}\label{K-L-eq}
\langle x,y\rangle=\pm\langle z,w\rangle \quad \text{and} \quad \langle x,w\rangle=\pm\langle y,z\rangle.
\end{equation}
\end{lemma}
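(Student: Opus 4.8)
The plan is to convert the two sign-ambiguous identities into honest trace identities and then to propagate them by sweeping the horizontal line down through the diagram. Each of the four arcs carries a parabolic element; write $X,Y,Z,W\in\mathcal P$ for these, with $T(X)=[x]$, $T(Y)=[y]$, and so on. Then (\ref{3-1}) gives $\langle x,y\rangle^2=2-tr(XY)$ and similarly for the other pairs, so $\langle x,y\rangle=\pm\langle z,w\rangle$ is equivalent to $tr(XY)=tr(ZW)$ and $\langle x,w\rangle=\pm\langle y,z\rangle$ is equivalent to $tr(XW)=tr(YZ)$. This is the cleanest way to absorb the $\pm$, which otherwise has to be tracked by hand through the sign choices (\ref{quandle-eq1}) and (\ref{quandle-eq2}).

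First I would establish the base case by pushing the horizontal line up to just below the two bridges. There the four arcs fall into two pairs coming from the two bridge arcs, so up to the sign convention two of the vectors equal $a_{1,0}$ and the other two equal $b_{1,0}$. Whichever of the two planar pairings occurs, both relations hold trivially: one side of each reduces to $\langle a_{1,0},a_{1,0}\rangle=\langle b_{1,0},b_{1,0}\rangle=0$, and the remaining equality reads $\langle a_{1,0},b_{1,0}\rangle=\pm\langle a_{1,0},b_{1,0}\rangle$.

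Next I would run the induction block by block. Inside the $i$-th twist block only the two active strands change, and by the discussion preceding (\ref{quandle-block}) they evolve by right multiplication by a power of $X(\pm u_i)$; since $tr\, X(u)=-u$, Lemma \ref{SL-Chebyshev} expresses these powers, hence all four arc vectors at the bottom of the block, as explicit linear combinations of the vectors at the top with Chebyshev coefficients $p_j(\mp u_i)$. The relation whose two determinants are exactly the active pair and the complementary passive pair is then immediate, since the active determinant is the constant block determinant $u_i$ of (\ref{quandle-block}) and the passive strands do not change across the block. The other relation I would verify from the closed Chebyshev form together with the Grassmann--Pl\"ucker relation $\langle x,y\rangle\langle z,w\rangle-\langle x,z\rangle\langle y,w\rangle+\langle x,w\rangle\langle y,z\rangle=0$, valid for any four vectors of $\mathbb C^2$, which should collapse the desired equality to one of the identities in Lemma \ref{cheby-prop}.

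The step I expect to be the real obstacle is exactly this other relation. A single crossing does not preserve it: after twisting one pair, the new value of, say, $\langle x,y\rangle$ mixes in the cross-determinants $\langle x,z\rangle$ and $\langle y,w\rangle$, which are not pinned down by the two stated equalities alone, so the induction does not close on those two relations by themselves. I would resolve this either by carrying a strengthened invariant that also controls $\langle x,z\rangle$ and $\langle y,w\rangle$, or, more cleanly, by checking the relation only at the two ends of each block via the exact Chebyshev expressions, using that the passive strands are constant and that the planar connectivity of the diagram forces the cross-terms into the shape dictated by the Pl\"ucker relation. Making this bookkeeping collapse to a single Chebyshev identity is where the substance of the argument lies.
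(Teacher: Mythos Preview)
Your opening move, converting the $\pm$ identities into the trace equalities $tr(XY)=tr(ZW)$ and $tr(XW)=tr(YZ)$ via (\ref{3-1}), is exactly what the paper does. But after that point you and the paper diverge completely, and the paper's route is dramatically shorter.

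The paper makes a single topological observation and is done: the horizontal loop encircling all four strands bounds a disk in the link complement, so its image under $\rho$ is the identity. Concretely this says that some signed product such as $XYZ^{-1}W^{-1}=Id$ (the exponents $\pm1$ determined by the strand orientations) holds at \emph{every} horizontal level. From $XY=WZ$ one reads off $tr(XY)=tr(WZ)=tr(ZW)$, and from $X^{-1}W=YZ^{-1}$ one gets $\langle x,w\rangle^2=-\langle ix,w\rangle^2=-(2-tr(X^{-1}W))=-(2-tr(YZ^{-1}))=\langle y,z\rangle^2$. No induction, no Chebyshev identities, no Pl\"ucker relation.

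Your plan, by contrast, tries to propagate the two equalities crossing-by-crossing (or block-by-block) using explicit $X(\pm u_i)$ powers and then hopes that a Chebyshev identity together with Pl\"ucker will close the loop. You yourself flag that the second relation is not obviously preserved by a single crossing and that the bookkeeping ``is where the substance of the argument lies''; that substance is never supplied, so as written the proposal has a genuine gap. Ironically, the worry is an artifact of the method: once you know the product relation $X^{\pm1}Y^{\pm1}Z^{\pm1}W^{\pm1}=Id$ holds at every level, both trace equalities hold at every level, so nothing needs to be propagated at all. The missing idea is precisely this contractibility of the horizontal loop.
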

\begin{proof}
Since the element in $SL(2,\bc)$
corresponding to the loop rotating our diagram horizontally by 1 full turn  is the identity matix, 
one of the following is satisfied:
\begin{enumerate}
	\item [\rm (i)] $XYZ^{-1}W^{-1}=Id$
	\item [\rm (ii)] $X^{-1}Y^{-1}ZW=Id$
	\item [\rm (iii)] $X^{-1}YZW^{-1}=Id$
	\item [\rm (iv)] $X^{-1}YZ^{-1}W=Id$
	\item [\rm (v)] $XY^{-1}ZW^{-1}=Id$
	\item [\rm (vi)] $XY^{-1}Z^{-1}W=Id$
\end{enumerate}	
where $X,Y,Z,W$ are the elements in $SL(2,\mathbb C)$ which correspond to $x,y,z,w$, respectively.
If (i) is satisfied, then 
 $$\langle x,y\rangle^2=2-tr(XY)=2-tr(WZ) = \langle z,w\rangle^2$$
and    
\begin{equation*}
	\begin{split}
		\langle x,w\rangle^2&=-\langle ix,w\rangle^2=-(2-tr(X^{-1}W))=-(2-tr(YZ^{-1}))=-\langle y,iz\rangle^2= \langle y,z\rangle^2.
	\end{split}
\end{equation*}	
In the case that any of (ii)-(vi) is satisfied, we also get the same result, 
 $$\langle x,y\rangle^2=\langle z,w\rangle^2, \quad \langle x,w\rangle^2=\langle y,z\rangle^2,$$ by a similar argument.
This completes the proof.
\end{proof}
From now on, we will use $a$ and $b$   instead of  $a_{1,0}$ and $b_{1,0}$ for simplicity if there is no worry about confusion.
\begin{corollary}\label{KL-coro}
\begin{enumerate}
\item [\rm (i)] $\langle b,a_{2j,f}\rangle=\pm u_{2j+1}$ if $2j+1 \leq k$.
\item [\rm (ii)] $\langle b,b_{2j+1,f}\rangle=\pm u_{2j+2}$  if $2j+2 \leq k$.
\item [\rm (iii)] $\langle a_{i,0},a_{i,f}\rangle =\pm \langle b_{i,0},b_{i,f} \rangle$ for each $i=1,\cdots,k$.
\end{enumerate}
\end{corollary}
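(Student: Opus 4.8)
The plan is to derive all three identities from the Key Lemma (\lemref{KeyL}) together with the constancy of the block determinant $u_i=\langle a_{i,j},b_{i,j}\rangle$ recorded in (\ref{quandle-block}); part (iii) also admits a short direct computation, which I would give first since it is the cleanest and needs no diagrammatics. Iterating the recursion $(a_{i,j+\delta_i'},b_{i,j+\delta_i'})=(a_{i,j},b_{i,j})X(\delta_i u_i)^{\delta_i'}$ across the $|n_i|$ crossings of the $i$-th block gives $(a_{i,f},b_{i,f})=(a_{i,0},b_{i,0})\,X(\delta_i u_i)^{n_i}$, since $\delta_i'|n_i|=n_i$. Writing $M=X(\delta_i u_i)^{n_i}=(m_{pq})$, we have $a_{i,f}=m_{11}a_{i,0}+m_{21}b_{i,0}$ and $b_{i,f}=m_{12}a_{i,0}+m_{22}b_{i,0}$, so bilinearity and antisymmetry yield $\langle a_{i,0},a_{i,f}\rangle=m_{21}u_i$ and $\langle b_{i,0},b_{i,f}\rangle=-m_{12}u_i$. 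It therefore suffices to check $m_{21}=-m_{12}$. Since $\mathrm{tr}\,X(\delta_i u_i)=-\delta_i u_i$, \lemref{SL-Chebyshev}(i) gives $M=p_{n_i}(-\delta_i u_i)\,X(\delta_i u_i)-p_{n_i-1}(-\delta_i u_i)\,I$, whose off-diagonal entries are $m_{12}=-p_{n_i}(-\delta_i u_i)$ and $m_{21}=p_{n_i}(-\delta_i u_i)$. Hence $m_{21}=-m_{12}$, and in fact $\langle a_{i,0},a_{i,f}\rangle=\langle b_{i,0},b_{i,f}\rangle$, which is slightly stronger than (iii).

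For (i) and (ii) I would invoke \lemref{KeyL}. The idea is to draw the horizontal line separating the $2j$-th block from the $(2j+1)$-st (respectively the $(2j+1)$-st from the $(2j+2)$-nd). Because $K$ is $2$-bridge, this line meets the diagram in exactly four arcs. The crucial input is that these four arcs are exactly $b=b_{1,0}$, the vector $a_{2j,f}$, and the two strands $a_{2j+1,0},b_{2j+1,0}$ entering the twist region immediately below, whose symplectic product is $u_{2j+1}$ by (\ref{quandle-block}). Reading the four arcs $x,y,z,w$ from left to right and applying whichever relation of \lemref{KeyL} pairs $\{b,a_{2j,f}\}$ against $\{a_{2j+1,0},b_{2j+1,0}\}$ — either $\langle x,y\rangle=\pm\langle z,w\rangle$ or $\langle x,w\rangle=\pm\langle y,z\rangle$ — then equates $\langle b,a_{2j,f}\rangle$ with $\pm\langle a_{2j+1,0},b_{2j+1,0}\rangle=\pm u_{2j+1}$, which is (i). Part (ii) is identical after exchanging the roles of the $a$- and $b$-type vectors, reflecting the alternation between the horizontal and vertical twist blocks.

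The routine parts are the algebra in (iii) and the recognition of block determinants via (\ref{quandle-block}). The main obstacle is purely diagrammatic bookkeeping in (i) and (ii): one must read off from Figure \ref{KeyLemma} the correct cyclic order of the four strands at each block junction, so as to confirm that the four named vectors really are the four arcs met by the chosen line (in particular that the meridian $b$ persists down one side and that $\{b,a_{2j,f}\}$ is the pair complementary to the active pair $\{a_{2j+1,0},b_{2j+1,0}\}$), and to decide which of the two Key-Lemma relations applies. Along the way one must track the orientation-dependent signs $\delta_i,\delta_i'$ and the six sign cases (i)--(vi) of \lemref{KeyL}; but in every case these signs are absorbed into the $\pm$, so they do not affect the statement, and only the identification of the four arcs genuinely requires the picture.
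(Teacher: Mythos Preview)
Your argument is correct. For (i) and (ii) you do exactly what the paper does: pick the horizontal line at the relevant block junction, identify the four strands (with $w=b$ on the far right), and read off the desired identity from \lemref{KeyL} using the pair $\{x,w\}$ against $\{y,z\}$; the paper's proof is literally the one-line instantiation of this.

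For (iii) your route genuinely differs from the paper's. You compute directly: iterate the block recursion to get $(a_{i,f},b_{i,f})=(a_{i,0},b_{i,0})X(\delta_i u_i)^{n_i}$, expand the power via \lemref{SL-Chebyshev}, and observe that the off-diagonal entries of $X(\cdot)^{n_i}$ are negatives of each other, forcing $\langle a_{i,0},a_{i,f}\rangle=\langle b_{i,0},b_{i,f}\rangle$ exactly. The paper instead argues diagrammatically: it closes the $i$-th block into a short link $K'=J(n_i)$ or $J(n_i,2)$ and applies \lemref{KeyL} to $K'$ (alternatively, it notes that $A_{i,0}^{\epsilon_1}A_{i,f}^{\epsilon_2}B_{i,f}^{\epsilon_3}B_{i,0}^{\epsilon_4}=Id$ and repeats the trace computation of \lemref{KeyL}). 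Your computation is shorter, requires no auxiliary diagram, and yields the sharper conclusion (no $\pm$); the paper's version has the advantage of being method-uniform with (i) and (ii) and of foreshadowing the ``closing the tangle'' trick used again in \lemref{K-L2} and its Remark. Either approach is fine here.
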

\begin{proof}
\begin{enumerate}
\item [\rm (i)] By Lemma \ref{KeyL}, $\langle b,a_{2j,f}\rangle=\pm \langle a_{2j+1,0},b_{2j+1,0}\rangle=\pm u_{2j+1}$.
\item [\rm (ii)] By Lemma \ref{KeyL}, $\langle b,b_{2j+1,f}\rangle=\pm \langle a_{2j+2,0},b_{2j+2,0}\rangle=\pm u_{2j+2}$.
\item [\rm (iii)] 
 The  $\mathfrak{C}$-coloring of the $i$-th block of $J(n_1 ,\cdots,n_k)$ starting from two vectors $a_{1,0},b_{1,0}$ with $u=u_1=\langle a_{1,0},b_{1,0}\rangle$  is the same as the $\mathfrak{C}$-coloring of the first block of $K'$, which is one of  $$J(n_i),\,\, J(n_i,2)$$
 with the same orientation as $K$ or  reversed, starting from two vectors $a'_{1,0}=a_{i,0}$ and $b'_{1,0}=b_{i,0}$.  If we apply Lemma \ref{KeyL} to $K'$, then  
$$\langle a'_{1,0},a'_{1,n_i}\rangle=\pm \langle b'_{1,0},b'_{1,n_i}\rangle,$$
which implies 
$$\langle a_{i,0},a_{i,f}\rangle=\pm \langle b_{i,0},b_{i,f} \rangle.$$
(iii) can be also proved directly by the same argument as Lemma \ref{KeyL}, because $$A_{i,0}^{\epsilon_1}A_{i,f}^{\epsilon_2}B_{i,f}^{\epsilon_3}B_{i,0}^{\epsilon_4}=Id$$
for some $\epsilon_1, \epsilon_2,\epsilon_3,\epsilon_4 \in \{1,-1\}$ with $\epsilon_1+\epsilon_2+\epsilon_3+\epsilon_4=0$.
\end{enumerate}
\end{proof}

The construction of $a_{i,j}$ and $b_{i,j}$ implies that there are  polynomials $f_{i,j}(u), g_{i,j}(u), \tilde{f}_{i,j}(u), \tilde{g}_{i,j}(u)$ for each $i=1,\cdots,k$ and $j=0,\cdots,n_i$ such that 
$$
a_{i,j}=f_{i,j}(u)a+g_{i,j}(u) b,  \quad b_{i,j}=\tilde{f}_{i,j}(u)a+\tilde{g}_{i,j}(u) b,
$$
and these polynomials have the following properties.

\begin{lemma}\label{degree} 
\begin{enumerate}
\item [\rm (i)] $f_{i,j}(u), g_{i,j}(u), \tilde{f}_{i,j}(u), \tilde{g}_{i,j}(u)$ are monic polynomials with integer coefficients for any pair $(i,j)$ up to sign.
\item [\rm (ii)] $u_i$ is also a monic integer coefficient polynomial of $u$ up to sign, and $u \,|\, u_i$ for all $i=1,\cdots,k$.
\end{enumerate}
\end{lemma}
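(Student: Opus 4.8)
The plan is to prove both parts simultaneously by induction on the block index $i$ (outer) and on the within-block counter $j$ (inner), following the order in which the vectors $a_{i,j},b_{i,j}$ are constructed. Writing $M_{i,j}$ for the $2\times 2$ matrix whose columns are the coordinates of $a_{i,j}$ and $b_{i,j}$ in the basis $\{a,b\}$, i.e. $M_{i,j}=\begin{pmatrix} f_{i,j} & \tilde f_{i,j}\\ g_{i,j} & \tilde g_{i,j}\end{pmatrix}$, the within-block recursion $(a_{i,j+\delta_i'},b_{i,j+\delta_i'})=(a_{i,j},b_{i,j})X(\delta_i u_i)^{\delta_i'}$ translates into $M_{i,j+\delta_i'}=M_{i,j}\,X(\delta_i u_i)^{\delta_i'}$. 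Since every entry of $X(\pm u_i)^{\pm1}$ lies in $\{0,\pm1,\mp u_i\}$, integrality of the four coefficient polynomials is automatic once we know $u_i\in\mathbb Z[u]$; the only genuine content is the assertion about leading coefficients. The base case $(i,j)=(1,0)$ is trivial as $M_{1,0}=I$, and the statement of monicity is understood to apply to those coefficients that are not identically zero.

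For part (ii), observe first that $u_i=\langle a_{i,0},b_{i,0}\rangle=(f_{i,0}\tilde g_{i,0}-g_{i,0}\tilde f_{i,0})\,u=\det(M_{i,0})\,u$, which makes the divisibility $u\mid u_i$ and the integrality of $u_i$ immediate consequences of part (i). To see that $u_i$ is \emph{monic} up to sign, I would instead invoke \cororef{KL-coro}: since $b=b_{1,0}$ has coordinates $(0,1)$, one has $\langle b,v\rangle=-u\cdot(a\text{-coefficient of }v)$ for every $v$, whence $u_{2j+1}=\pm\langle b,a_{2j,f}\rangle=\pm u\,f_{2j,f}(u)$ and $u_{2j+2}=\pm\langle b,b_{2j+1,f}\rangle=\pm u\,\tilde f_{2j+1,f}(u)$. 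By part (i) the polynomials $f_{2j,f}$ and $\tilde f_{2j+1,f}$ are monic, so each $u_i$ with $i\ge 2$ is monic up to sign, as is $u_1=u$. Thus part (ii) reduces entirely to monicity of the four coefficient polynomials.

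The inductive step for monicity is where the work lies. A single application of $X(\pm u_i)^{\pm1}$ sends the pair $(a_{i,j},b_{i,j})$ to a new pair in which one vector is $\pm$ a copy of an old one (so its coefficients retain their leading terms) while the other has the shape $\pm(\text{old vector})\pm u_i\,(\text{old vector})$; for the latter the leading coefficient is $\pm1$ precisely when the summand carrying the factor $u_i$ strictly dominates the other in degree, so that no cancellation occurs. I would therefore carry along in the induction a precise degree ledger for $f_{i,j},g_{i,j},\tilde f_{i,j},\tilde g_{i,j}$, phrased as strict inequalities guaranteeing that the $u_i$-term always wins. More transparently, within a block the vectors obey a three-term Chebyshev recursion with parameter $\pm u_i$ (namely $b_{i,j+1}=(\mp u_i)b_{i,j}-b_{i,j-1}$ and $a_{i,j+1}=b_{i,j}$), so by \lemref{SL-Chebyshev} and the identity $Ch_n^t(a,b)=-a\,p_{n-1}(t)+b\,p_n(t)$ one obtains closed forms expressing $a_{i,j},b_{i,j}$ as combinations of $a_{i,0},b_{i,0}$ with coefficients $\pm p_{j-1}(\pm u_i)$ and $\pm p_j(\pm u_i)$. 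Since $p_n$ is monic of degree $n-1$ (immediate from its three-term recursion) and $u_i$ is monic of degree $\ge 1$, the composite $p_n(\pm u_i)$ is monic of degree $(n-1)\deg u_i$; substituting $a_{i,0}=f_{i,0}a+g_{i,0}b$ and $b_{i,0}=\tilde f_{i,0}a+\tilde g_{i,0}b$ then exhibits the degrees explicitly and shows that the $p_j(\pm u_i)$-term dominates, yielding monicity.

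Finally, the block transition closes the induction: by the construction and \cororef{KL-coro}, the initial pair $(a_{i+1,0},b_{i+1,0})$ of the next block is, up to sign and order, taken from $\{a_{i,f},b_{i,f},b\}$, all of which already have monic integer coefficients; this is also what lets us assert at the start of block $i+1$ that $u_{i+1}$ is monic before running the inner induction there. The main obstacle is exactly the degree bookkeeping of the previous paragraph: one must pin the degrees of $f_{i,j},g_{i,j},\tilde f_{i,j},\tilde g_{i,j}$ down sharply enough — across both the within-block steps and the transitions, where $\deg u_i$ itself varies — to guarantee that leading terms never cancel. Everything else, namely integrality, the divisibility $u\mid u_i$, and the reduction of (ii) to (i), is routine.
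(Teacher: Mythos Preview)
Your proposal is correct and follows essentially the same inductive scheme as the paper's own proof: induction on the block index, reduction of (ii) to (i) via \cororef{KL-coro} together with the identities $u_{k+1}=\pm u f_{k,f}$ (for $k$ even) and $u_{k+1}=\pm u\tilde f_{k,f}$ (for $k$ odd), and a degree count ensuring monicity within each block. The paper simply asserts the key degree relation $\deg f_{i,j+1}=\deg f_{i,j}+\deg u_i$, whereas you unpack it through the Chebyshev closed form $p_n(\pm u_i)$; this is the same argument made more explicit, and your honest flagging of the cross-block degree bookkeeping as the one place requiring care is exactly right.
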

\begin{proof} 
For $i=1$, (i) is obvious from the definition of $f_{i,j}(u), g_{i,j}(u), \tilde{f}_{i,j}(u), \tilde{g}_{i,j}(u)$ and (ii) is trivially satisfied because $u_1=u$. Since $u_2=\langle a_{2,0},b_{2,0}\rangle=\langle a_{1,0},a_{1,f}\rangle=ug_{1, f}(u)$, $u_2$ is a monic integer coefficient polynomial of $u$ and $u \,|\, u_2$  if $u_2\neq 0$, and obviously 
$f_{2,j}(u), g_{2,j}(u), \tilde{f}_{2,j}(u), \tilde{g}_{2,j}(u)$ are all monic polynomials with integer coefficients. 
Note thtat 
$$\deg f_{i,j+1}=\deg f_{i,j}+\deg u_i$$
for any pair $(i,j)$, and the same is also true for $g_{i,j}(u), \tilde{f}_{i,j}(u), \tilde{g}_{i,j}(u)$.

Now we proceed by induction on $i$. Assume that the statement (i) and (ii) are true for all $i\leq k$.
Since 
\begin{equation*}
u_{k+1}=\pm \langle b,a_{k,f}\rangle=\pm uf_{k,f} \quad \text{when}\,\, k\,\, \text{is even }
\end{equation*}
and 
\begin{equation*}
u_{k+1}=\pm \langle b,b_{k,f}\rangle=\pm u\tilde{f}_{k, f} \quad \text{when}\,\, k\,\, \text{is odd }
\end{equation*}
by Corollary \ref{KL-coro}, $u_{k+1}$ is also monic and is divided by $u $. Therefore we can conclude  that (i) and (ii) are true for $i=k+1$, which completes the proof. 
\end{proof}

\begin{lemma}\label{K-L2} Suppose $u=u_1\neq 0$. Let $x,y,z,w$ be vectors in $\mathbb C^2$ which sequentially correspond to the 4 arc vectors intersecting an arbitrary horizontal line (therefore $w=b_{1,0}$). Then the following holds  for some $\epsilon\in \{1,i\}$.
\begin{enumerate}
\item [\rm (i)] If $\langle x,y\rangle=0$ or $\langle z,w\rangle=0$, then $x=\pm\epsilon y$ and  $z=\pm\epsilon w$.
\item [\rm (ii)] If $\langle y,z\rangle=0$ or $\langle x,w\rangle=0$, then $y=\pm\epsilon z$ and  $x=\pm\epsilon w$.
\item [\rm (iii)] If $\langle x,z\rangle=0$, then $x=\pm\epsilon z$ and $w=\pm \epsilon( y \rhd^{\pm 1} z)$.
\item [\rm (iv)] If $\langle y,w\rangle=0$, then $y=\pm\epsilon w$  and $z=\pm \epsilon( x \rhd^{\pm 1} y)$.
\end{enumerate}
Here $\epsilon=i$ when the orientations are  opposite, and  the sign of $\rhd^{\pm 1}$ in \emph{(iii)} (respectively, \emph{(iv)}) is $+$ only when the orientation of $z$ (respectively, $y$) is going down.  
\end{lemma}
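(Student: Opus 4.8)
The plan is to reduce every case to one elementary fact: two vectors in $\mathbb{C}^2$ with vanishing symplectic pairing are parallel, and the loop relation recorded in the proof of Lemma~\ref{KeyL} forces the proportionality constant to be $\pm1$ or $\pm i$. Throughout I would use the three standing facts from Section~3: a parabolic $A$ with $T(A)=[a]$ is $A=I+a\hat a$, its inverse is $A^{-1}=I-a\hat a$ with $T(A^{-1})=[ia]$, and $\hat a a=\langle a,a\rangle=0$, so each $N:=a\hat a$ is nilpotent with $N^2=0$ and $\operatorname{tr}N=0$. I would also use the conjugation formulas $T(B^{-1}AB)=[a\rhd b]$, $T(BAB^{-1})=[a\rhd^{-1}b]$ from Section~3 and the trace identity (\ref{3-1}).

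For parts (i) and (ii), suppose $\langle x,y\rangle=0$. By Lemma~\ref{KeyL}, $\langle z,w\rangle^2=\langle x,y\rangle^2=0$, so $\langle z,w\rangle=0$ too; hence $x=\lambda y$ and $z=\mu w$ for scalars $\lambda,\mu$ (non-parallel vectors pair nontrivially). I would substitute these into whichever of the six relations of Lemma~\ref{KeyL} holds. Since $X=I+\lambda^2 y\hat y$ and $Y=I+y\hat y$ are ``powers'' of the single nilpotent $N=y\hat y$, any product such as $XY$ or $XY^{-1}$ collapses via $N^2=0$ to $I+(1\pm\lambda^2)y\hat y$, and the $z,w$ half collapses to $I+(1\pm\mu^2)w\hat w$, so the relation becomes
\[
(1\pm\lambda^2)\,y\hat y=(1\pm\mu^2)\,w\hat w .
\]
As long as $y\not\parallel w$, the rank-one matrices $y\hat y$ and $w\hat w$ are linearly independent, forcing both coefficients to vanish, so $\lambda^2,\mu^2\in\{1,-1\}$, i.e. $x=\pm\epsilon y$, $z=\pm\epsilon w$ with $\epsilon\in\{1,i\}$. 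The sign in $1\pm\lambda^2$ is the relative exponent of $X,Y$ in the loop word, which encodes the relative orientation of the two strands, giving $\epsilon=1$ for equal and $\epsilon=i$ for opposite orientation. Part (ii) is the identical argument applied to the pair $(y,z)$, whose Lemma~\ref{KeyL}-partner is $(x,w)$.

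For parts (iii) and (iv), note that $(x,z)$ and $(y,w)$ are exactly the two ``diagonal'' pairs \emph{not} linked by Lemma~\ref{KeyL}, so a different step is needed. Assuming $\langle x,z\rangle=0$, write $x=\lambda z$, $X=I+\lambda^2P$ with $P=z\hat z$, $P^2=0$. I would solve the loop relation for $W$, obtaining a word in $X$, $Y^{\pm1}$, $Z^{\pm1}$; imposing $\operatorname{tr}W=2$ (automatic, as $w=b_{1,0}$ is a meridian vector) and expanding with $P^2=0$ yields a single scalar condition of the form $(\lambda^2\mp1)\langle y,z\rangle^2=0$. Since $\langle y,z\rangle\neq0$ here, $\lambda^2=\pm1$, whence $x=\pm\epsilon z$ with $\epsilon\in\{1,i\}$ and $X=Z^{\pm1}$. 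Feeding $X=Z^{\pm1}$ back, the word for $W$ collapses to a genuine conjugate $Z^{\pm1}Y^{\pm1}Z^{\mp1}$, and the conjugation formulas read off $w=\pm\epsilon\,(y\rhd^{\pm1}z)$, the choice of $\rhd$ versus $\rhd^{-1}$ being dictated by whether $Z$ or $Z^{-1}$ appears, i.e. by whether $z$ runs downward or upward, matching the stated rule. Part (iv) is symmetric, conjugating $x$ by $Y^{\pm1}$.

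The computations are routine once the nilpotency $N^2=0$ is exploited, so the real work is the sign/orientation bookkeeping: verifying, across the six loop words of Lemma~\ref{KeyL}, that the sign in $1\pm\lambda^2$ (equivalently the exponent pattern of the two relevant matrices) is precisely the relative orientation of the strands, so that $\epsilon=i$ corresponds exactly to opposite orientations and the $\rhd^{\pm1}$ sign to the direction of $z$ (resp.\ $y$). I expect this to be the only delicate point. Two degenerate sub-cases must also be dispatched: in (i)--(ii) the step ``$y\not\parallel w$'' can fail, and in (iii)--(iv) one could have $\langle y,z\rangle=0$; in each such situation all four vectors become mutually parallel and the statement either reduces to a neighbouring case or is checked directly. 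Finally, the hypothesis $u=u_1\neq0$ is what guarantees that $a$ and $b$, and hence generically the four arc vectors, are linearly independent, which is what legitimizes the ``coefficients must vanish'' steps.
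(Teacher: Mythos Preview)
Your approach is quite different from the paper's. The paper establishes the first-block case by the explicit Chebyshev formula $(a_{1,k},b_{1,k})=(a,b)X(u)^k$ together with the identity $p_{n+1}^2-tp_np_{n+1}+p_n^2=1$, and then runs an induction on the horizontal level, pushing the four hypotheses (i)--(iv) one crossing (or one block transition) upward. Your argument instead tries to read everything off the single loop relation of Lemma~\ref{KeyL} plus nilpotency of $a\hat a$.

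The main gap is the ``degenerate sub-cases'' you defer to the end. In case (i), if $y\parallel w$ then (together with $x\parallel y$ and, by Lemma~\ref{KeyL}, $z\parallel w$) all four vectors are parallel; writing $x=\lambda_x w$, $y=\lambda_y w$, $z=\lambda_z w$, every one of the six loop words collapses to a single scalar identity of the form $\pm\lambda_x^2\pm\lambda_y^2\pm\lambda_z^2\pm1=0$, which cannot by itself force $\lambda_x/\lambda_y\in\{\pm1,\pm i\}$ and $\lambda_z\in\{\pm1,\pm i\}$. So ``reduces to a neighbouring case'' is circular (all four cases degenerate simultaneously) and ``checked directly'' is false if the only input is the loop relation. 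The fix is to show this situation never occurs: the span of the four arc vectors is invariant under a crossing (the varying pair is hit by an $SL_2$ matrix, the fixed pair is unchanged) and at the top it equals $\mathrm{span}(a,b)=\mathbb C^2$ since $u\neq0$; hence the four arcs are never simultaneously parallel. But note that this fix is exactly an induction through the Conway diagram, which is what the paper's proof does and what your outline was trying to avoid. Your sentence ``$u\neq0$ guarantees\ldots generically the four arc vectors are linearly independent'' gestures at this but ``generically'' is not enough.

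A second, smaller gap: you correctly isolate the sign bookkeeping (which of the six loop words corresponds to which orientation pattern, and hence whether one gets $1+\lambda^2$ or $1-\lambda^2$) as the delicate point, but you do not actually carry it out; you only ``expect'' it to match. Since the precise statement that $\epsilon=i$ exactly for opposite orientations, and that $\rhd$ versus $\rhd^{-1}$ tracks the direction of $z$ (resp.\ $y$), is the content of the lemma, this needs to be verified case by case, not asserted.
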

\begin{proof}
Note that by Lemma \ref{KeyL}, $\langle x,y\rangle=0$ if and only $\langle z,w\rangle=0$, and $\langle x,w\rangle=0$ if and only $\langle y,z\rangle=0$.

Firstly, we prove the lemma for the case that the horizontal line cuts the first block, that is, 
\begin{equation}\label{first block}
	x=a,y=a_{1,j}, z=b_{1,j}, w=b.
\end{equation}
	 Since an arc vector is multiplied by $i$ if its orientation is reversed, we may assume that the orientations of both of the two strands are going down as in Figure \ref{K-L-3}. The arc vectors of the diagram in Figure \ref{K-L-3} satisfy the following identity by Lemma \ref{SL-Chebyshev}, 
\begin{equation*}
\begin{split}
(a_{1,k},b_{1,k})&=(a,b)
X(u)^{k}=(a,b)
\begin{pmatrix}
-p_{k-1}(-u)& -p_{k}(-u)\\
p_{k}(-u) &p_{k+1}(-u)
 \end{pmatrix}.
\end{split}
\end{equation*}
(The left diagram is when $n_1>0$ and the right one is when $n_1<0$.)

If  $\langle a,a_{1,j} \rangle=0$ then $p_{j}(-u)=0$ and so $p_{j-1}(-u)^2=p_{j+1}(-u)^2=1$ by (v) of Lemma \ref{cheby-prop}, which implies 
$$a_{1,j}=\pm a, \quad b_{1,j}=\pm b.$$ 
This proves that  (i) is true for any $j$.
\begin{figure}[hbt]
\begin{center}
\scalebox{0.25}{\includegraphics{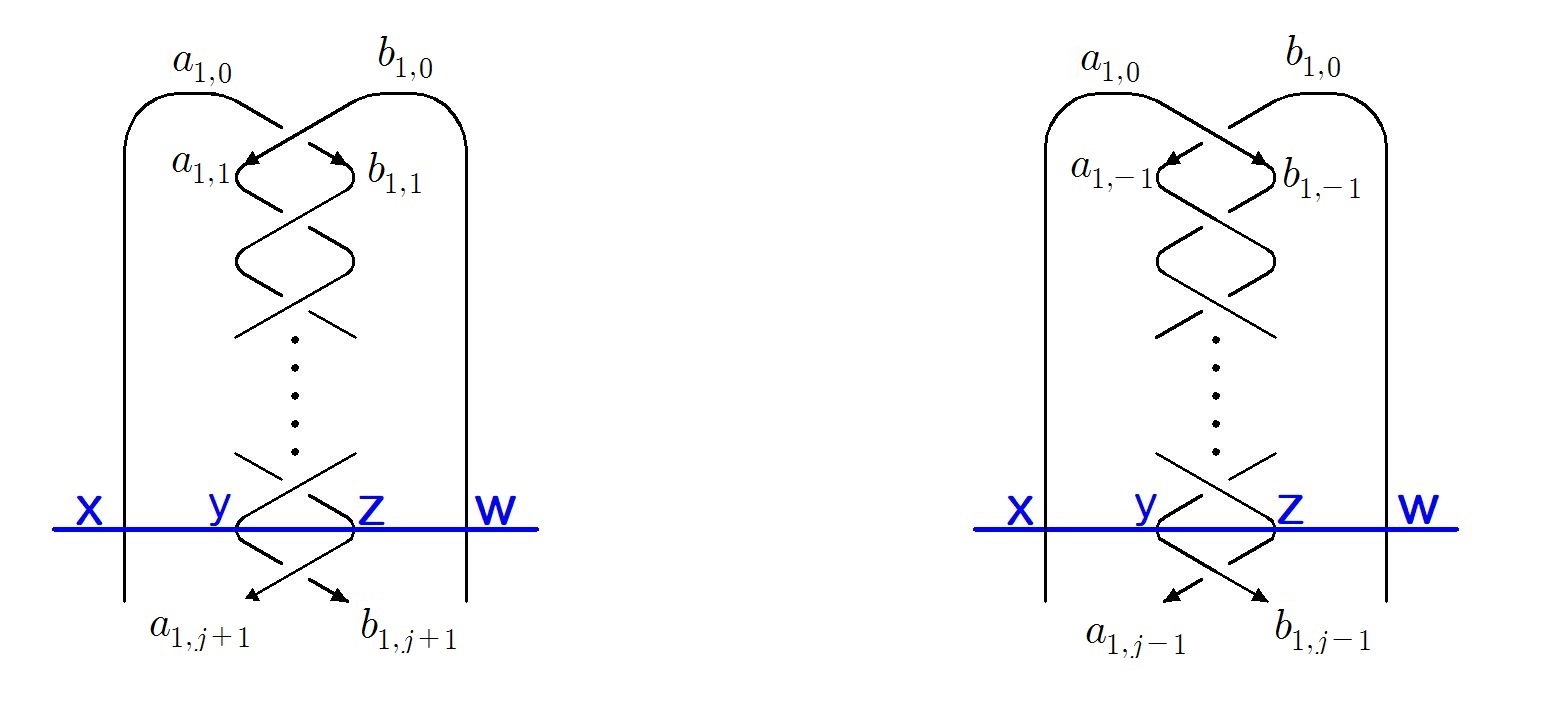}}
\end{center}
\caption{  }\label{K-L-3}
\end{figure}

If  $\langle  a,b_{1,j} \rangle=0$, then $\langle  a,a_{1,j+1} \rangle=0$  and thus 
$$b_{1,j}=a_{1,j+1}=\pm a, \quad a_{1,j}\rhd b_{1,j}= b_{1,j+1}=\pm b$$ 
holds true by (i), which proves (iii). 
If  $\langle  a_{1,j},b \rangle=0$ then $$\langle  b_{1,j-1},b \rangle=0$$ is satisfied 
and thus 
$$a_{1,j}=b_{1,j-1}=\pm b, \quad b_{1,j}\rhd^{-1} a_{1,j}= a_{1,j-1}=\pm a$$ 
 holds true 
by (i), which proves (iv).  Since $\langle  y,z \rangle=\langle  a_{1,j},b_{1,j} \rangle=u\neq 0$, we have completed the proof for the case  when the horizontal line cuts the first block. 

\begin{figure}[hbt]
\begin{center}
\scalebox{0.4}{\includegraphics{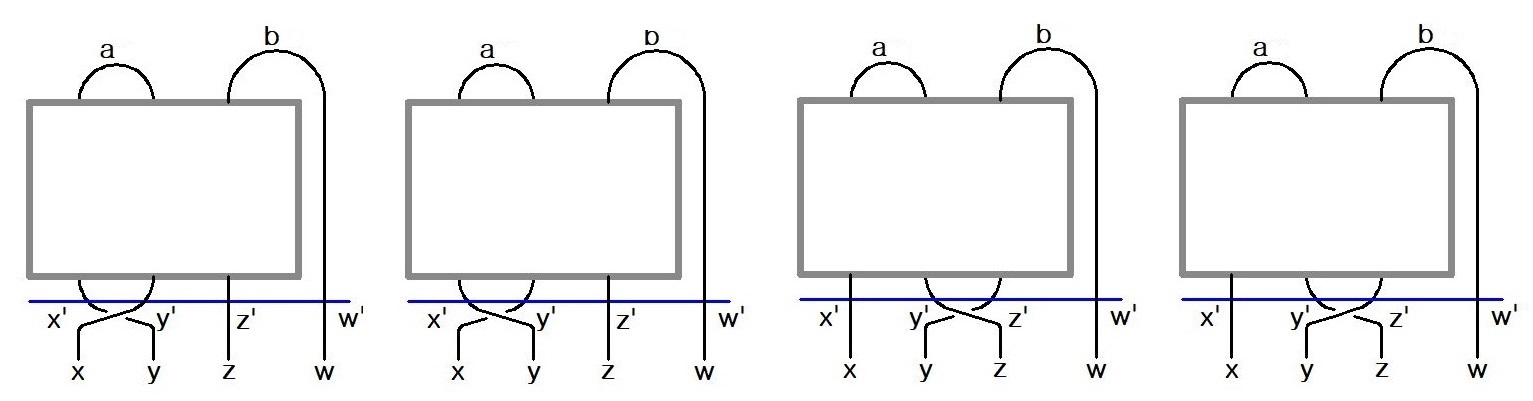}}
\end{center}
\caption{  }\label{induction}
\end{figure}

Now we proceed by induction on $(i,j)$. Assume that the lemma is true for any four $(i,j)$  pairs such that $i<k$ or $i= k, |j|<m$ and consider  the next four  $(i,j)$ pairs of arc vectors 
$x,y,z,w$, that is to say  $(i,|j|)=(k,m)$ if $m < |n_k|$ and $(i,j)=(k+1,0)$ if $m -1= |n_k|$. If we let $x',y',z',w'$ be the 4 arc vectors on the previous horizontal line, then it can be proved that these four vectors satisfy one of the assumptions of (i), (ii), (iii), and (iv) if $x,y,z,w$ do. For example, in the case of  $\langle x,y\rangle=0$, one of three equations, $$\langle x',y'\rangle=0, \langle x',z'\rangle=0,  \langle y',w'\rangle=0,$$ is satisfied by Lemma \ref{KeyL}:  $\langle x',y'\rangle=0$ holds when $z'=z$ and either $\langle x',z'\rangle=0$ or $\langle y',w'\rangle=0$ holds when $x=x'$ (see Figure \ref{induction}). Since the lemma is true for $x',y',z',w'$ by the induction hypothesis, it is easy to show that $x=\pm\epsilon y$ and  $z=\pm\epsilon w$, which means (i) is true for $x,y,z,w$. (ii), (iii), and (iv) are similarly proved.
\end{proof}

\begin{remark} 
If we let  $x,y,z,w$ be vectors in Lemma \ref{K-L2} and $K'$ be the knot or the link which is made by closing the 4 arcs of the upper tangle, as in the diagrams of Figure \ref{closing}. Then 
 the arc-coloring  corresponds to  a parabolic representation  
 $\rho : G(K') \rightarrow SL(2,\bc)$ and the orientation of $K'$ is the same as $K$  if $\epsilon=1$, and one of the two strand's orientations must be reversed  if $\epsilon=i$.
\begin{figure}[hbt]
\begin{center}
\scalebox{0.7}{\includegraphics{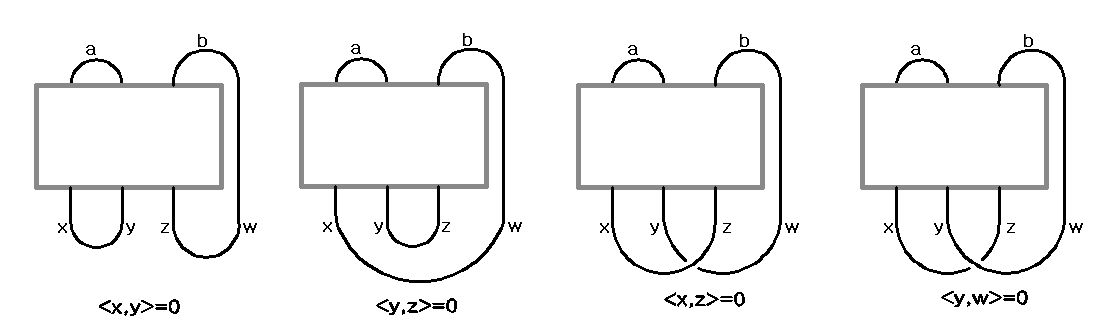}}
\end{center}
\caption{  }\label{closing}
\end{figure}
\end{remark}

\subsection{Rep-polynomials}  
To get a well-defined $\mathfrak{C}$-coloring on $K=J(n_1 ,\cdots,n_k)$ with the first two vectors $a=a_{1,0}$ and $b=b_{1,0}$,  
the last two vectors $a_{k,f}=a_{k,n_k}$ and $b_{k,f}=b_{k,n_k}$ must be the same, up to sign, as the vectors aleady determined for the arcs, which gives us the equation to determine the coloring (see Figure \ref{u-poly}).

\begin{figure}[hbt]
\begin{center}
\scalebox{0.3}{\includegraphics{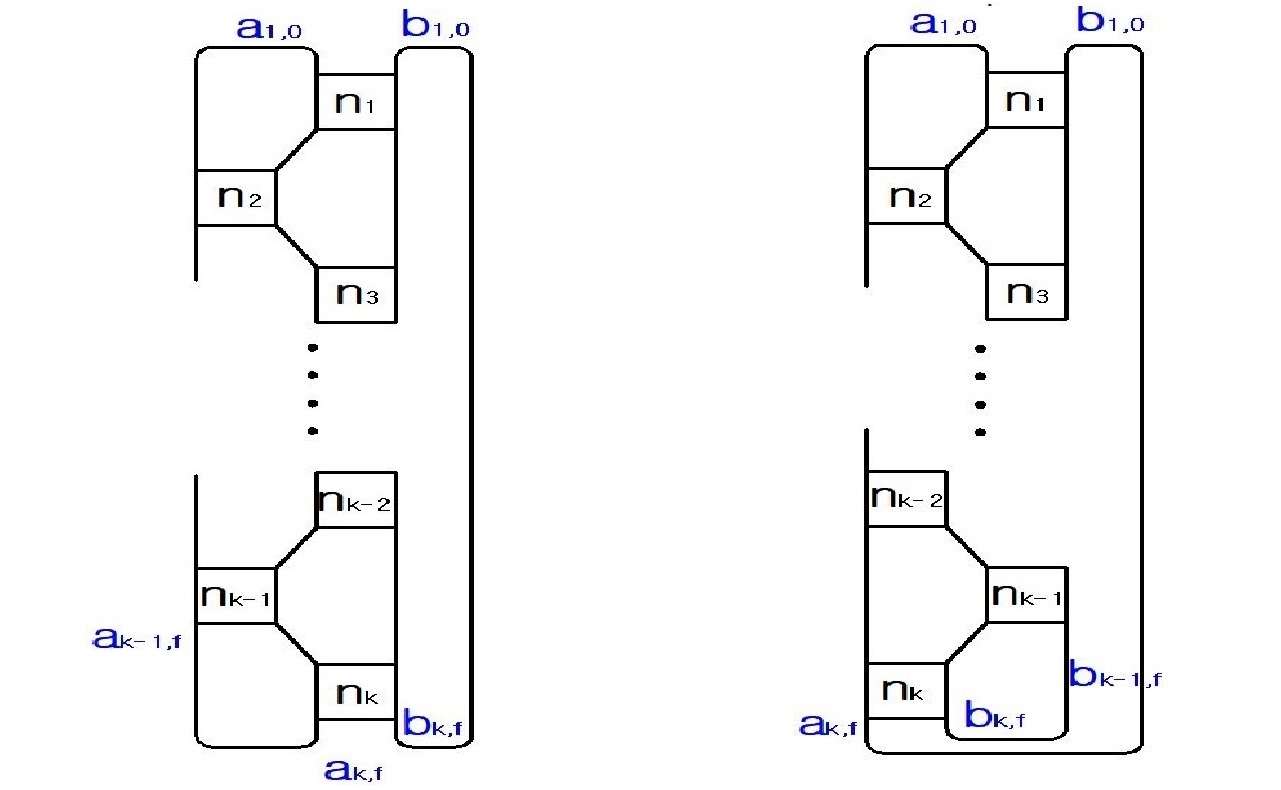}}
\end{center}
\caption{  }\label{u-poly}
\end{figure}

By Lemma \ref{KeyL},
  $$\langle a_{k,f},a_{k-1,f}\rangle=\pm\langle b_{k,f},b\rangle$$ 
when $k$ is an odd number greater than $1$, and
 $$\langle b_{k,f},b_{k-1,f}\rangle=\pm\langle a_{k,f},b\rangle $$ 
when $k$ is an even number.
If we let $a_{0,f}=a$, then the above equation also holds for the case when  $k=1$. 

Let $P_K(u)$ be  a polynomial in $u$ with positive leading coefficient which is defined as follows:
\begin{enumerate}
\item [\rm (i)]    $P_K(u)=\pm \langle a_{k,f},a_{k-1,f}\rangle=\pm \langle b_{k,f},b\rangle $ if $k$ is odd,
\item [\rm (ii)] $P_K(u)=\pm \langle b_{k,f},b_{k-1,f}\rangle=\pm \langle a_{k,f},b\rangle $ if $k$ is even. 
\end{enumerate}
Note that $P_K(u)$ is defined for a diagram $K=J(n_1,\cdots,n_k)$, but we will see in Theorem \ref{2-rep-poly} that it is essentially independent of the choice of a diagram.
\begin{proposition} \label{prop-uPolynomial} 
Let  $K=J(n_1,\cdots,n_k)$ with an orientation.
Then $ P_K(u)$ is a monic polynomial  with integer coefficients and there is a $\mathfrak{C}$-coloring with $\langle a,b\rangle=r$ on $K=J(n_1,\cdots,n_k)$ if and only if $r$ is a root of 
the equation $P_K(u)=0$. Furthermore, $0$ is a root of $P_K(u)$, which corresponds to an abelian representation, and  there is a $\mathfrak{C}$-coloring with $\langle a,b\rangle=0, a\neq \pm b$ if and only if $K$ is a link.
\end{proposition}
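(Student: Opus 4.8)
The plan is to split the statement into four assertions—($a$) that $P_K(u)$ is monic with integer coefficients, ($b$) the coloring–existence criterion, ($c$) that $0$ is always a root and is abelian, and ($d$) the link characterization—and to reduce each to the ``closing condition'' for the downward propagation, feeding in the algebraic input of Lemma~\ref{degree} and the geometric input of Lemmas~\ref{KeyL} and~\ref{K-L2}. I would begin by disposing of ($a$) and ($c$) simultaneously. By Lemma~\ref{degree} we may write $a_{k,f}=f_{k,f}(u)\,a+g_{k,f}(u)\,b$ and $b_{k,f}=\tilde f_{k,f}(u)\,a+\tilde g_{k,f}(u)\,b$ with all four coefficients monic integer polynomials up to sign. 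Using $\langle a,b\rangle=u$, $\langle b,b\rangle=0$, and bilinearity,
\[
\langle b_{k,f},b\rangle=u\,\tilde f_{k,f}(u),\qquad \langle a_{k,f},b\rangle=u\,f_{k,f}(u),
\]
so $P_K(u)=\pm u\,\tilde f_{k,f}(u)$ for $k$ odd and $P_K(u)=\pm u\,f_{k,f}(u)$ for $k$ even; after fixing the sign for a positive leading coefficient each is monic with integer coefficients and manifestly divisible by $u$, giving $P_K(0)=0$. Since $u=0$ means $\langle a,b\rangle=0$, i.e. $a\parallel b$, the two meridians map to upper-triangular parabolics with a common fixed point and the representation is abelian (or trivial), as normalized at the start of this section. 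The well-definedness of $P_K(u)$, i.e. that the two displayed expressions agree up to sign, is exactly Lemma~\ref{KeyL} applied to the horizontal line just below the last block.

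For ($b$), the heart of the matter, I would argue that a consistent $\mathfrak C$-coloring exists precisely when the propagation closes up, i.e. when the terminal vectors are forced to agree—up to sign and up to the orientation factor $i$—with the arcs they are glued to by the bottom closure. Reading off Figure~\ref{u-poly}, the bottom horizontal line meets the four vectors $a_{k-1,f},a_{k,f},b_{k,f},b$ (with $a_{0,f}:=a$ when $k=1$), and the closure glues $a_{k,f}$ to $a_{k-1,f}$ and $b_{k,f}$ to $b$. For $r\neq0$ I would apply Lemma~\ref{K-L2}(i) with $(x,y,z,w)=(a_{k-1,f},a_{k,f},b_{k,f},b)$: the scalar equation $P_K(r)=0$ is equivalent to $\langle a_{k-1,f},a_{k,f}\rangle=0$, and the lemma upgrades this from mere parallelism to $a_{k-1,f}=\pm\epsilon\,a_{k,f}$ together with $b_{k,f}=\pm\epsilon\,b$, where $\epsilon\in\{1,i\}$ is dictated by the relative orientations. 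Because the orientation-reversal factor is exactly $i$, these are precisely the identities the closure demands, so $P_K(r)=0$ yields an honest $\mathfrak C$-coloring; conversely any coloring forces those two identifications and hence $\langle a_{k-1,f},a_{k,f}\rangle=0$. Lemma~\ref{KeyL} guarantees the two glueings vanish together, so one scalar equation suffices, and the case $r=0$ is absorbed into ($c$) and ($d$). The even-$k$ case is identical after swapping the roles of the $a$'s and $b$'s and using Lemma~\ref{K-L2}(ii).

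For ($d$) I would appeal to the two group presentations directly. With $u=0$ and $a\neq\pm b$ the coloring is the nontrivial abelian one, $\rho(a)=\left(\begin{smallmatrix}1&1\\0&1\end{smallmatrix}\right)$ and $\rho(b)=\left(\begin{smallmatrix}1&v^{2}\\0&1\end{smallmatrix}\right)$ with $v\neq\pm1$. If $K$ is a knot, the relator $wa=bw$ forces $\rho(b)=\rho(w)\rho(a)\rho(w)^{-1}=\rho(a)$, since all matrices here are upper triangular and commute, whence $v^{2}=1$, a contradiction; so no coloring with $a\neq\pm b$ exists. If $K$ is a link, the relators $wb=bw$ and $w^{*}a=aw^{*}$ hold automatically among commuting upper-triangular matrices, so every $v$ gives a genuine representation, and choosing $v\neq\pm1$ produces $a\neq\pm b$.

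The step I expect to be the main obstacle is the orientation bookkeeping inside ($b$): one must verify that the arcs identified by the bottom closure carry exactly the orientations for which Lemma~\ref{K-L2}'s factor $\epsilon$ equals the correct power of $i$, so that ``$a_{k-1,f}=\pm\epsilon\,a_{k,f}$'' is a genuine equality of vectors in $\mathfrak C$ rather than an artifact of the ambiguity. Pinning down the left-to-right ordering of the four vectors on the bottom line, and extracting the value of $\epsilon$ from the parity of $k$ and the chosen orientation, is the one place where a careful diagram chase, rather than a purely formal manipulation, will be needed.
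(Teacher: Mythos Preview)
Your proof is correct and follows essentially the same route as the paper: monic integrality from Lemma~\ref{degree} (the paper phrases it via $P_K=\pm u_{k+1}$ of an extended diagram, you compute $P_K=\pm u\tilde f_{k,f}$ directly---these are the same computation), the coloring criterion via the closing conditions~(\ref{eq1})--(\ref{eq2}) and Lemma~\ref{K-L2}, and $P_K(0)=0$ from the visible factor of $u$. The only place you diverge is in~(d), where you invoke the group presentations~(\ref{knot-group}) and~(\ref{link-group}) while the paper argues diagrammatically that with $u=0$ every arc vector lies in $\{\pm a,\pm b\}$ and the bottom closure forces $a=\pm b$ exactly when the two bridges lie on a single component; both are one-line arguments, and your acknowledgement that the orientation bookkeeping for $\epsilon$ is the delicate step matches the paper's level of detail.
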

\begin{proof}
We can choose an integer $n_{k+1}$ such that the orientation of $K$ is the same as $K'=J(n_1,\cdots,n_k, n_{k+1})$. Then  
$P_K(u)$ equals $\pm u_{k+1}$ of $K'$ by Corollary \ref{KL-coro} and thus   $P_K(u)$ is a monic polynomial  with integer coefficients  by Lemma \ref{degree}. 

If we start with $a=b$, then $\{a_{i,j}, b_{i,j}\}\subset\{b,\ -b\}$ for all $i,j$, which implies  $P_K(0)=0$ for any $K$. It is obvious that  if $K$ is a link then any pair $ a,b$ such that $\langle  a,b \rangle=0$ always gives a  $\mathfrak{C}$-coloring on $K$, but if $K$ is a knot then  a $\mathfrak{C}$-coloring is obtained only when $a=\pm b$. 

In the case of  $\langle a,b\rangle\neq 0$, 
the followingh must be satisfied   :
 \begin{equation}\label{eq1}\{\begin{array}{rcl} a_{k,f}=&\pm a_{k-1,f}\\
b_{k,f}=&\pm b
\end{array} \mbox{for odd} \,\,k
\end{equation}
\begin{equation}\label{eq2}
\{\begin{array}{rcl} a_{k,f}=&\pm b\\
b_{k,f}=&\pm b_{k-1,f}
\end{array} \mbox{for even} \,\, k 
\end{equation}
But by Lemma \ref{K-L2},
(\ref{eq1}) is equivalent to $$ \langle a_{k,f},a_{k-1,f}\rangle=0 \quad  (\Leftrightarrow  \langle b_{k,f},b\rangle=0),
$$ 
and
(\ref{eq2})
is equivalent to 
 $$\langle a_{k,f}, b\rangle=0 \quad  (\Leftrightarrow  
\langle b_{k,f},b_{k-1,f}\rangle=0).$$

\end{proof}
Note that (i) $P_K(r)=0$ implies $P_K(-r)=0$, because if we get a $\mathfrak{C}$-coloring on $K$ from a pair $a,b$, then we must also get a $\mathfrak{C}$-coloring on $K$ from a pair $a,-b$ since it only changes the sign of the coloring from $a,b$,
and  (ii) each root $r$ of the equation $P_K(u)=0$ gives a parabolic representation of $K=J(n_1,\cdots,n_k)$ and there is no other parabolic representations. So 
we will call the polynomial $P_K(u)$, the $rep$-$polynomial$ of a 2-bridge link $K$. Even though we defined the rep-polynomial when we have an orientation, but it does not depend on its orientation for the knot case and we have two rep-polynomials for the link case as we can see in the following Proposition.
\begin{proposition}\label{u-polys}
Let $K=C[n_1,\cdots,n_k]$. If  $-K$  and $\bar{K}$ are the orientation-reversed link of $K$ and the mirror of $K$ respectively, then 
$$P_{-K}(u)=P_K(u)=P_{\bar{K}}(u).  $$
Especially, the rep-polynomial of $C[n_1,\cdots,n_k]$ equals the rep-polynomial of $C[-n_1,\cdots,-n_k]$.
\end{proposition}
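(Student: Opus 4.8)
The plan is to treat both equalities by a single mechanism: for each of the two operations I will exhibit a linear change of coordinates on $\bc^2$ that carries a $\mathfrak{C}$-coloring of $K$ to a $\mathfrak{C}$-coloring of the transformed diagram while sending the initial determinant $u=\langle a,b\rangle$ to $-u$. This yields the polynomial identities $P_{-K}(u)=\pm P_K(-u)$ and $P_{\bar K}(u)=\pm P_K(-u)$, which I then promote to genuine equalities using the root symmetry recorded just after \propref{prop-uPolynomial}.

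The key algebraic observation is that any $\Phi\in\GL(2,\bc)$ with $\langle\Phi x,\Phi y\rangle=-\langle x,y\rangle$ (equivalently $\det\Phi=-1$, since $\langle\Phi x,\Phi y\rangle=\det(\Phi)\langle x,y\rangle$) interchanges the two dual quandle operations:
$$\Phi(x\rhd y)=\Phi x+\langle x,y\rangle\,\Phi y=\Phi x-\langle\Phi x,\Phi y\rangle\,\Phi y=\Phi x\rhd^{-1}\Phi y,$$
and symmetrically $\Phi(x\rhd^{-1}y)=\Phi x\rhd\Phi y$. Now passing from $K$ to $-K$ reverses every meridian, so by $T(A^{-1})=[iT(A)]$ it multiplies every arc vector by $i$ and flips the Wirtinger conjugation direction at each crossing; this is exactly the effect of $\Phi=i\,I$, for which $\langle ix,iy\rangle=i^2\langle x,y\rangle=-\langle x,y\rangle$. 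Passing from $K$ to $\bar K=C[-n_1,\dots,-n_k]$ switches over/under at every crossing, which interchanges $\rhd$ and $\rhd^{-1}$ in (\ref{quandle-eq1})--(\ref{quandle-eq2}) while leaving all orientations fixed, and is realized by any real $\Phi$ with $\det\Phi=-1$, e.g.\ $\Phi=\mathrm{diag}(1,-1)$. In either case, applying $\Phi$ uniformly to every coloring vector of $K$ produces a valid $\mathfrak{C}$-coloring of the transformed diagram (one must check that the crossing relations (\ref{quandle-eq1})--(\ref{quandle-eq2}) and the inter-block transitions of \lemref{KeyL} are preserved, which holds because $\Phi$ is applied globally), with $\langle\Phi a,\Phi b\rangle=-u$. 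Since the defining determinants of the rep-polynomial in cases (i)--(ii) preceding \propref{prop-uPolynomial} each scale by $\det\Phi=-1$ and get reparametrized by $u\mapsto-u$ (the coloring vectors being the polynomials of \lemref{degree}), this gives $P_{-K}(u)=\pm P_K(-u)$ and $P_{\bar K}(u)=\pm P_K(-u)$.

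To finish, recall from the remark after \propref{prop-uPolynomial} that $P_K(r)=0\Rightarrow P_K(-r)=0$, so the multiset of roots of the monic polynomial $P_K$ is invariant under negation, whence $P_K(-u)=(-1)^{\deg P_K}P_K(u)$. As $P_{-K}$, $P_{\bar K}$ and $P_K$ are all monic with positive leading coefficient, matching leading coefficients in $P_{-K}(u)=\pm(-1)^{\deg P_K}P_K(u)$ (and likewise for $\bar K$) forces the ambiguous sign to be $+1$, so $P_{-K}(u)=P_K(u)=P_{\bar K}(u)$; the last assertion is then immediate since $C[-n_1,\dots,-n_k]$ is precisely the mirror $\bar K$ of $K=C[n_1,\dots,n_k]$. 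I expect the main obstacle to be the middle step: verifying at the level of the diagrammatic recursion that each of the two operations genuinely replaces $\rhd$ by $\rhd^{-1}$ at every crossing --- that is, correctly tracking the orientation conventions $\delta_i$ and the block-sign conventions $\delta_i'$ so that the local matrices $X(\delta_iu_i)^{\pm1}$ transform as asserted, and confirming that the single map $\Phi$ is compatible with the gluing between consecutive blocks. Once that is in place, the passage from the sign-ambiguous identity $P_{\mathrm{new}}(u)=\pm P_K(-u)$ to an exact equality is purely a matter of the positive-leading-coefficient normalization together with the root symmetry.
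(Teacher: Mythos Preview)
Your argument for $P_{-K}=P_K$ via $\Phi=iI$ is correct and is exactly what the paper does. The gap is in the mirror case, precisely at the point you flagged as the ``main obstacle.''

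Your claim is that passing to $\bar K=C[-n_1,\dots,-n_k]$ ``interchanges $\rhd$ and $\rhd^{-1}$ in (\ref{quandle-eq1})--(\ref{quandle-eq2})''. But both (\ref{quandle-eq1}) and (\ref{quandle-eq2}) have the form $a'=b,\ b'=-(a\rhd^{\pm}b)$; in either case the over-strand runs from the top-right position $b$ to the bottom-left position $a'$. Swapping over/under at the crossing puts the over-strand from top-left to bottom-right, which forces $b'=a$ and $a'=-(b\rhd^{\mp}a)$ --- a relation of a different shape, not (\ref{quandle-eq2}). In the paper's matrix language, replacing $n_i$ by $-n_i$ flips $\delta_i'$, so the per-crossing step changes from $X(\delta_iu_i)^{\delta_i'}$ to its inverse $X(\delta_iu_i)^{-\delta_i'}$, whereas your $\Phi$ with $\det\Phi=-1$ (which negates each $u_i$) would account for the change $X(\delta_iu_i)\mapsto X(-\delta_iu_i)$. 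Since
\[
X(u)^{-1}=\begin{pmatrix}-u&1\\-1&0\end{pmatrix}\neq\begin{pmatrix}0&-1\\1&u\end{pmatrix}=X(-u),
\]
the $\Phi$-pushforward of a $K$-coloring does \emph{not} satisfy the $\bar K$-recursion. (A one-block check already fails: for $C[1]$ the step sends $(a,b)$ to $(b,-a-ub)$, while for $C[-1]$ it sends $(a,b)$ to $(-ua-b,a)$; these are not intertwined by any linear $\Phi$ acting columnwise.) Thus the equations (\ref{quandle-eq1})--(\ref{quandle-eq2}) differ by \emph{orientation} ($\delta_i$), not by crossing type ($\delta_i'$), and your $\Phi$ handles the former but not the latter.

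The paper sidesteps this by composing the mirror with an orientation reversal before transporting the coloring: reflecting the oriented diagram in a vertical mirror (which swaps the left/right positions, reverses crossing sign, and reverses orientation simultaneously) sends the step matrix to $S\,X(\delta_iu_i)^{\delta_i'}S=X(\delta_iu_i)^{-\delta_i'}$ with $S=\begin{psmallmatrix}0&1\\1&0\end{psmallmatrix}$, exactly the $\bar K$-step; the ``reflected coloring'' (Figure~\ref{mirror-diagram}) is then a bona fide $\mathfrak C$-coloring of the mirror. One then invokes $P_{-K}=P_K$ to discard the auxiliary orientation reversal. Your uniform $\det\Phi=-1$ mechanism can be repaired along these lines --- e.g.\ combine your $\Phi$ with the left/right swap --- but as written the mirror step does not go through.
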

\begin{proof}
Since there is a 1-1 correspondence between the set of  $\mathfrak{C}$-colorings on  $K$ and that of $-K$ by multiplying $i$ to each corresponding arc vector, and 
  $$\langle  ia, ib\rangle=-\langle  a,b\rangle,$$ $P_{-K}(u)=P_K(u)$ follows.

If we reflect an oriented diagram of $K=C[n_1,\cdots,n_k]$ in the mirror and then reverse its orientation, then we get an oriented  diagram $\bar{K}$.  It is easy to check that we get an well-defined $\mathfrak C$-coloring on $\bar{K}$ by reflecting any $\mathfrak C$-coloring on $K$. (See Figure \ref{mirror-diagram}.) 
\end{proof}
\begin{figure}[hbt]
\begin{center}
\scalebox{0.4}{\includegraphics{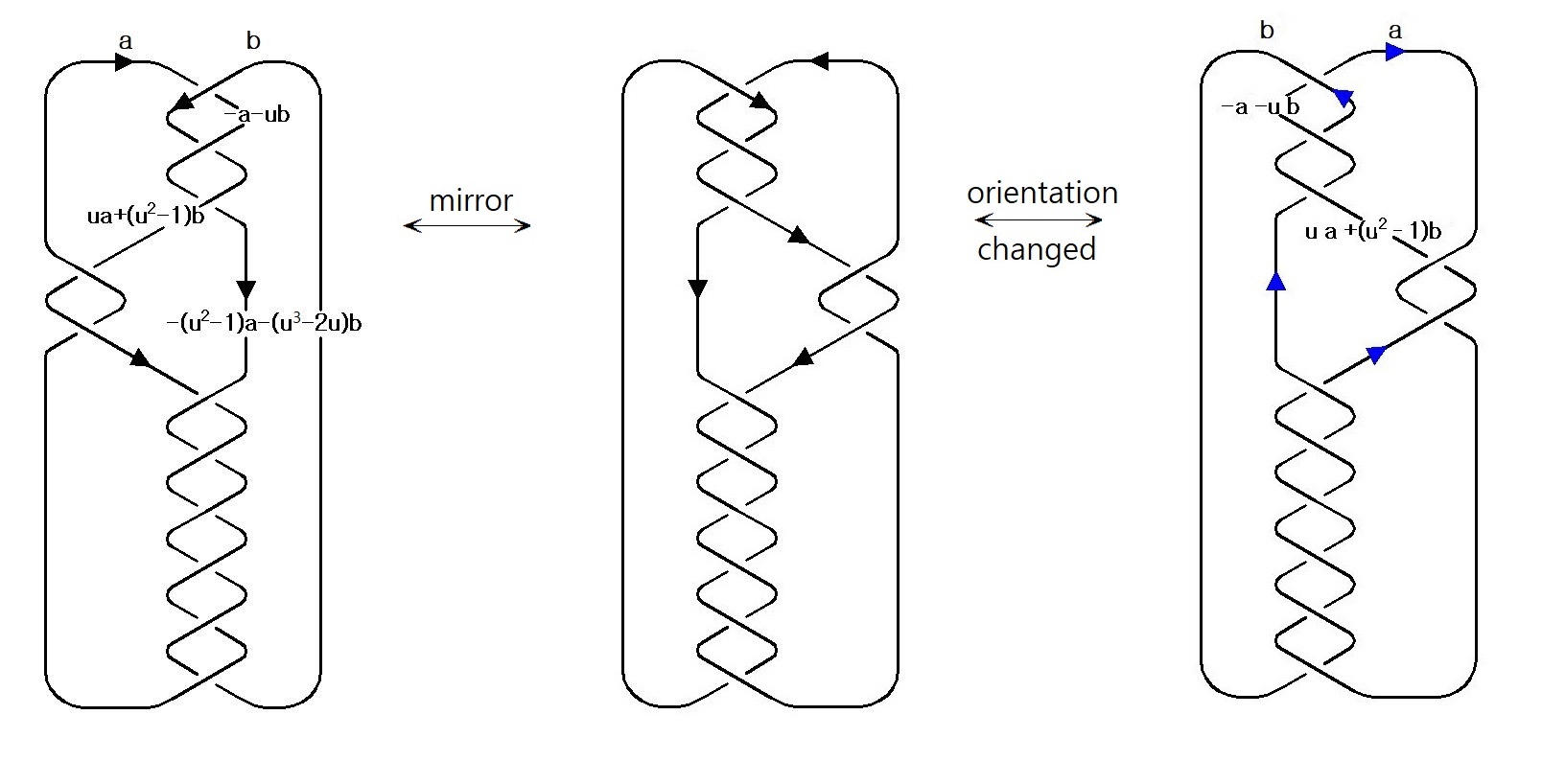}}
\end{center}
\caption{}\label{mirror-diagram}
\end{figure}
\begin{example}\label{C[3]-knot}
 The last two vectors of  the trefoil $K=3_1=C[3]$  are 
$$a_{1,3}=ua+(u^2-1)b, \quad  b_{1,3}=-(u^2-1)a-(u^3-2u)b,$$
if we start with two vectors $a_{1,0}=a, b_{1,0}=b$ such that $u=\langle a,b \rangle$. (See Figure \ref{trefoil}.) Therefore the rep-polynomial of $K$ is $$\langle b, b_{1,3} \rangle=u(u^2-1).$$
\end{example}
\begin{figure}[hbt]
\begin{center}
\scalebox{0.3}{\includegraphics{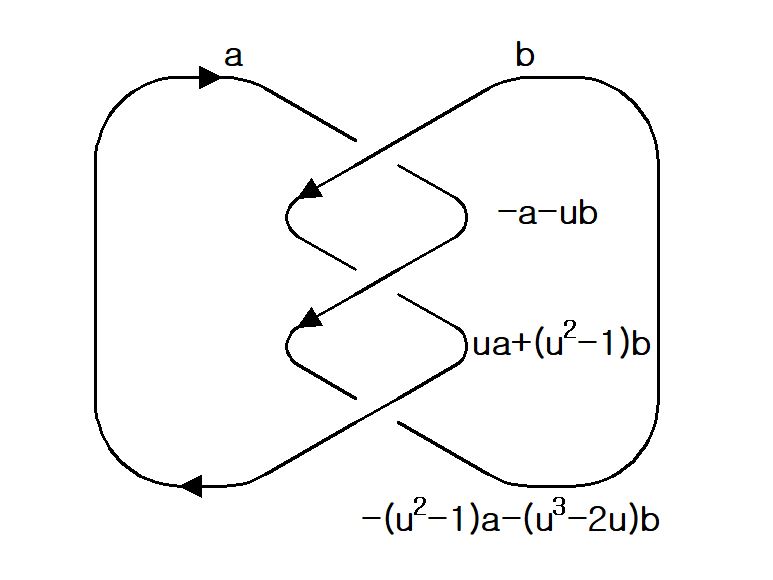}}
\end{center}
\caption{C[3]}\label{trefoil}
\end{figure}
By Proposition \ref{u-polys},  we can define the rep-polynomial of $K$ without any specific orientation for a 2-bridge knot  $K=C[n_1,\cdots,n_k]$. But if   $K=C[n_1,\cdots,n_k]$ is a link, then we get a different rep-polynomial  when we change the orientation of only one of the two components. So each link $K$ has two rep-polynomials, $P_1(u)$ and $P_2(u)$, up to its orientation, and these two satisfy  $$P_1(iu)=\pm P_2(u),\quad P_2(iu)=\pm P_1(u),$$  since 
 $$\langle  ia,b\rangle= i\langle  a,b\rangle=\langle  a, ib\rangle.$$

Our definition of the rep-polynomial of a 2-bridge link depends on its diagram, but $P_{C[n_1,\cdots,n_k]}(u)=P_{C[m_1,\cdots,m_l]}(u)$ if $[n_1,\cdots,n_k]=[m_1,\cdots,m_l]$, since $C[n_1,\cdots,n_k]$ can be deformed into $C[m_1,\cdots,m_l]$  by a finite number of Reidemeister moves.
Hence we have
 \begin{theorem}\label{2-rep-poly}
 Let $K$ be a 2-bridge knot. Then there are only two rep-polynomials $P_K(u)$ and $P'_K(u)$ for any Conway expansion diagram of $K$
 and these two satisfy the followings.
 \begin{enumerate}
\item [\rm (i)] two diagrams $C[n_1,\cdots,n_k]$ and $C[m_1,\cdots,m_{k'}]$ of $K$ have the same rep-polynomials if $[n_1,\cdots,n_k]=\pm [m_1,\cdots,m_{k'}]$, 
\item [\rm (ii)] if $P_K(u)$ is the rep-polynomial of a diagram $C[n_1,\cdots,n_k]$, then $P'_K(u)$  is the rep-polynomial of the upside-down diagram $C[(-1)^{k+1}n_k,\cdots,(-1)^{k+1}n_1]$.
\end{enumerate}
Similarly, each 2-bridge link $K$ has  four rep-polynomials $P_K(u), P_K(iu), P'_K(u), P'_K(iu)$.
 \end{theorem}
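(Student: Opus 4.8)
The plan is to promote the rep-polynomial from a diagram invariant to an invariant of the slope $[n_1,\cdots,n_k]=\tfrac{\beta}{\alpha}$ taken up to the symmetries already at hand, and then to count, via the classical Schubert classification, how many slopes a fixed $K$ can have. Two ingredients are immediate. First, by the remark preceding the theorem, two Conway diagrams with the \emph{same} slope are Reidemeister equivalent and so share their rep-polynomial. Second, by \propref{u-polys} the mirror negates the slope and preserves the rep-polynomial (indeed $P_{C[n_1,\cdots,n_k]}=P_{C[-n_1,\cdots,-n_k]}$). Combining the two yields statement (i) at once: if $[n_1,\cdots,n_k]=\pm[m_1,\cdots,m_{k'}]$, the two diagrams have equal rep-polynomials, so $P$ factors through the slope modulo sign. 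For (ii) I would then invoke the Preliminaries: the upside-down diagram $C[(-1)^{k+1}n_k,\cdots,(-1)^{k+1}n_1]$ has slope $\tfrac{\beta'}{\alpha}$ with $\beta\beta'\equiv 1\pmod{\alpha}$, so it realizes the reciprocal residue; defining $P'_K$ to be \emph{its} rep-polynomial is precisely (ii). Thus the two residues $\beta$ and $\beta^{-1}\pmod{\alpha}$ produce $P_K$ and $P'_K$.

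The heart of the count is to show these are the \emph{only} residues, and that each determines a single polynomial. For the first point I would use the Schubert classification from Section 2: a Conway diagram of slope $\tfrac{\beta''}{\alpha}$ presents $S(\alpha,\beta'')$, and two diagrams present the same unoriented $2$-bridge link exactly when they share $\alpha$ and $\beta''\equiv\beta^{\pm1}\pmod{\alpha}$. Hence every Conway diagram of $K$ has slope with numerator $\equiv\beta$ or $\equiv\beta^{-1}$. For the second point—that all slopes in one residue class give the \emph{same} polynomial, not merely the same for equal rationals—I would appeal to \propref{prop-uPolynomial}: the roots of $P_K$ are intrinsic, corresponding bijectively to the non-abelian parabolic representations of $G(K)$ through $tr(\rho(ab))=2-u^2$, where $a,b$ are the two bridge meridians. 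Since the $2$-bridge structure (equivalently the residue $\beta\pmod{\alpha}$) fixes this ordered meridian pair up only to the exchange that the upside-down already implements, every diagram in the $\beta$-class computes the same trace function, hence the same monic $P_K$, and likewise for the $\beta^{-1}$-class.

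It remains to separate the knot and link cases by orientation, and here lies the decisive point. For a knot there is a single component, so reversing the orientation reverses \emph{both} initial vectors, sending $u=\langle a,b\rangle$ to $\langle ia,ib\rangle=-u$; by the parity symmetry of $P_K$ noted after \propref{prop-uPolynomial} (and by \propref{u-polys}) this produces no new polynomial, leaving exactly $P_K$ and $P'_K$. For a link there are two components, and reversing only \emph{one} of them multiplies a single initial vector by $i$, so $u\mapsto iu$ by $\langle ia,b\rangle=i\langle a,b\rangle$; this is a genuinely new operation carrying $P_K(u)$ to $P_K(iu)$ and $P'_K(u)$ to $P'_K(iu)$. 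Thus each of the two residue classes splits into an oriented pair, giving the four polynomials $P_K(u),P_K(iu),P'_K(u),P'_K(iu)$.

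The main obstacle I anticipate is precisely the residue-invariance step: verifying that two different integer slopes in the same class—which need \emph{not} be related by the mirror move, as already the figure-eight's slopes $\tfrac{2}{5}$ and $\tfrac{-3}{5}$ show—nonetheless yield identical polynomials. I would meet this through the representation-theoretic characterization above rather than by explicit diagram manipulation, since the classical uniqueness of the $2$-bridge sphere guarantees that the ordered bridge-meridian pair, and therefore the trace function whose values are the roots of the rep-polynomial, depends only on $\beta\pmod{\alpha}$. The subsidiary care is purely bookkeeping: reconciling the slope numerator (which may be even) with the Schubert parameter and its parity, and keeping the upside-down ambiguity ($\beta\leftrightarrow\beta^{-1}$, giving $P\leftrightarrow P'$) cleanly disjoint from the orientation ambiguity ($u\mapsto iu$).
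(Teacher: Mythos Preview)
Your approach matches the paper's, which is deliberately brief: the only argument the paper gives is the sentence immediately preceding the theorem (equal continued-fraction values give Reidemeister-equivalent diagrams, hence equal $P_K$) together with \propref{u-polys} (mirror and global orientation reversal preserve $P_K$); parts (i) and (ii) and the link statement are then read off exactly as you do, from the Schubert classification and the $u\mapsto iu$ effect of reversing one link component.

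Where you go further than the paper is in isolating, and trying to close, the residue-class step: that two Conway diagrams whose slopes have the \emph{same} numerator modulo $\alpha$ but are \emph{not} related by $\beta\mapsto-\beta$ (so neither ``equal slope'' nor ``mirror'' applies directly) still give the same $P$. The paper simply does not address this and treats the theorem as a summary of what precedes it. Your representation-theoretic patch---that the roots of $P_K$ are the values of $u$ with $2-u^2=\mathrm{tr}\,\rho(ab)$, and that the bridge-meridian pair $(a,b)$ is pinned down (up to simultaneous conjugacy and swap) by the residue $\beta\bmod\alpha$---is the right idea, though the invocation of ``uniqueness of the $2$-bridge sphere'' is a bit loose: that uniqueness fixes the sphere and its four punctures, but it is the additional choice of which \emph{pair} of adjacent punctures sits ``on top'' that distinguishes $\beta$ from $\beta^{-1}$, exactly as you say. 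A cleaner way to nail this down, which the paper in fact develops immediately afterwards (Remark~\ref{Riley} and equation~\eqref{knot-Riley}), is to observe $P_K(u)=\pm u\,\mathcal R(u^2)$, so the whole polynomial is controlled by the Riley polynomial, which is defined directly from the Schubert word and hence depends only on $\beta\bmod\alpha$; this makes the ``at most two'' count immediate without appealing to bridge-sphere uniqueness.
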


Each rational number $\frac{\beta}{\alpha}\in (0,1)$  corresponds to a 2-bridge link $C[n_1,\cdots,n_k]$ with $[n_1,\cdots,n_k]=\frac{\beta}{\alpha}$. 
So by Theorem \ref{2-rep-poly} we have two polynomials  
$$P_{\frac{\beta}{\alpha}}(u), \,\, P'_{\frac{\beta}{\alpha}}(u) \in \mathbb Z[u]$$ if  we give the downward-orientation on both components for the  case when $C[n_1,\cdots,n_k]$ is a link. These polynomials satisfy the followings:
\begin{enumerate}
\item [\rm (i)] $P_{\frac{\beta}{\alpha}}(u)=P_{C[n_1,\cdots,n_k]}(u)$ if $[n_1,\cdots,n_k]=\frac{\beta}{\alpha}$ 
\item [\rm (ii)] $P_{\frac{\beta}{\alpha}}(u)=P_{\frac{\beta'}{\alpha'}}(u)$ if and only if $\frac{\beta}{\alpha}=\frac{\beta'}{\alpha'}$ or $\frac{\alpha-\beta}{\alpha}=\frac{\beta'}{\alpha'}$
\item [\rm (iii)] $P_{\frac{\beta'}{\alpha'}}(u)=P'_{\frac{\beta}{\alpha}}(u)$, if $\frac{\beta}{\alpha}$ and $\frac{\beta'}{\alpha'}$  represent the same links and $\frac{\beta}{\alpha}\neq \frac{\beta'}{\alpha'}, \frac{\alpha-\beta}{\alpha}\neq \frac{\beta'}{\alpha'}$. In this case,  $\alpha=\alpha',  \ \beta\beta'\equiv \pm 1$ $(\text{mod} \,\alpha)$ if we assume that $(\alpha,\beta)=(\alpha',\beta')=1$.
\end{enumerate}

Note that we will see later that   if $(\alpha,\beta)=1$, then   $\deg P_{\frac{\beta}{\alpha}}(u)=\alpha$ and
\begin{equation}\label{Riley-uPoly}
\frac{1}{u^{\epsilon}}P_{\frac{\beta}{\alpha}}(u)=\pm\mathcal{R}(u^2),\end{equation}
where $\mathcal{R}(y)$ is the Riley polynomial of $S(\alpha,\beta)$ and  $\epsilon=1$ if $\alpha$ is odd, $\epsilon=2$ if $\alpha$ is even.

\subsection{$u_i$-sequence}

\begin{proposition}\label{degree2}
Let $K=C[n_1,\cdots,n_k]$  and $\alpha_i$ be an integer defined by $$[n_1,\cdots,n_i]=\frac{\beta_i}{\alpha_i}, \alpha_0=1.$$
Then $$\deg(u_i)=\alpha_{i-1}.$$ 
\end{proposition}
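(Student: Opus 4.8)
The plan is to prove $\deg(u_i) = \alpha_{i-1}$ by induction on $i$, exploiting the recursive structure of the continued fraction $[n_1,\dots,n_i]$ together with the degree bookkeeping already set up in Lemma~\ref{degree}. The key observation is that the $\alpha_i$ satisfy the same linear recursion as the denominators of the convergents of the continued fraction: if $\frac{\beta_i}{\alpha_i}=[n_1,\dots,n_i]$ (written so that $\alpha_i>0$ and the fraction is in lowest terms), then the standard convergent recurrence gives
\begin{equation*}
\alpha_i = n_i\,\alpha_{i-1} + \alpha_{i-2},
\end{equation*}
with $\alpha_0=1$ and $\alpha_{-1}=0$ (one must be slightly careful with the sign/indexing conventions forced by the $J$-notation, but the magnitude obeys this three-term recursion). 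So the goal reduces to showing that $\deg(u_i)$ obeys the \emph{same} recursion, i.e. $\deg(u_{i+1}) = |n_i|\deg(u_i) + \deg(u_{i-1})$ after matching conventions, and then checking the base cases $\deg(u_1)=\deg(u)=1=\alpha_0$ and the $i=2$ case directly.

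First I would pin down how $\deg(u_{i+1})$ grows in terms of the degrees within the $i$-th block. From Lemma~\ref{degree} we already have the crucial local fact that passing through one crossing of the $i$-th block increases the degree of each of $f_{i,j},g_{i,j},\tilde f_{i,j},\tilde g_{i,j}$ by exactly $\deg(u_i)$; iterating $|n_i|$ times across the whole block shows that the ``final'' polynomials $f_{i,f},\tilde f_{i,f}$ (etc.) have degree $\deg(f_{i,0}) + |n_i|\deg(u_i)$. Combining this with the Corollary~\ref{KL-coro} identities $u_{i+1}=\pm u f_{i,f}$ (for $i$ even) or $u_{i+1}=\pm u\tilde f_{i,f}$ (for $i$ odd) lets me express $\deg(u_{i+1})$ in terms of $\deg(u_i)$ and the degree of the initial coefficient of the $(i+1)$-st block, which in turn is inherited from the last vectors of the $(i-1)$-st block. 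The payoff is a clean recursion $\deg(u_{i+1}) = |n_i|\deg(u_i) + \deg(u_{i-1})$, matching the convergent recurrence for $\alpha_i$.

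I would then set up the induction formally: assuming $\deg(u_j)=\alpha_{j-1}$ for all $j\le i$, the recursion just derived yields $\deg(u_{i+1}) = |n_i|\alpha_{i-1} + \alpha_{i-2} = \alpha_i$, closing the induction. The base cases are $\deg(u_1)=1=\alpha_0$ (immediate since $u_1=u$) and a direct computation of $\deg(u_2)=|n_1|=\alpha_1$ using $u_2=u\,g_{1,f}(u)$ together with the explicit first-block Chebyshev formula $(a_{1,k},b_{1,k})=(a,b)X(u)^k$ from the proof of Lemma~\ref{K-L2}, which shows $g_{1,f}=\pm p_{n_1}(\mp u)$ has degree $|n_1|-1$.

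The main obstacle I anticipate is not the degree arithmetic itself but getting the recursion indices and signs to line up with the $J(n_1,\dots,n_k)$ convention, where the sign $\epsilon_i$ and the orientation data $\delta_i,\delta_i'$ enter. In particular I must verify that the ``growth per block'' really is $|n_i|\deg(u_i)$ and that the degree of the leading coefficient feeding into block $i+1$ is exactly $\deg(u_{i-1})$ rather than something off by one; this requires tracking which of $f,g,\tilde f,\tilde g$ controls $u_{i+1}$ at each parity and confirming that the initial vectors $a_{i+1,0},b_{i+1,0}$ are the final vectors of block $i$. Once the correspondence between the block-degree recursion and the continued-fraction convergent recursion is established, the result follows, and it also transparently explains the promised degree formula $\deg P_{\frac{\beta}{\alpha}}(u)=\alpha$ since $P_K(u)=\pm u_{k+1}$ with $\alpha_k=\alpha$.
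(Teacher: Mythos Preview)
Your proposal is correct and follows essentially the same route as the paper: both use the continued-fraction convergent recursion $\alpha_i = n_i\alpha_{i-1}+\alpha_{i-2}$, the degree increment $\deg f_{i,j+1}=\deg f_{i,j}+\deg u_i$ from Lemma~\ref{degree}, and the identities $u_{i+1}=\pm u f_{i,f}$ or $\pm u\tilde f_{i,f}$ from Corollary~\ref{KL-coro}, combined by induction. The only cosmetic difference is that the paper records the four auxiliary degree formulas $\deg(uf_{2j,f})=\alpha_{2j}$, $\deg(u\tilde f_{2j+1,f})=\alpha_{2j+1}$, $\deg(uf_{2j+1,f})=\alpha_{2j+1}-\alpha_{2j}$, $\deg(u\tilde f_{2j,f})=\alpha_{2j}-\alpha_{2j-1}$ explicitly (distinguishing the parity of the block index) and reads off $\deg(u_i)=\alpha_{i-1}$ at the end, whereas you package the same bookkeeping into a single three-term recursion $\deg(u_{i+1})=|n_i|\deg(u_i)+\deg(u_{i-1})$; the anticipated ``obstacle'' about tracking which of $f,\tilde f$ and which block's initial vector is relevant is exactly what forces the paper's parity split.
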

\begin{proof}
Using the induction on $i$ and the fact $\alpha_i=n_i\alpha_{i-1}+\alpha_{i-2},$ it is not difficult to show 
$$\deg(uf_{2j,f}(u))=\alpha_{2j},$$
$$\deg(uf_{2j+1,f}(u))=\alpha_{2j+1}-\alpha_{2j},$$
$$\deg(u\tilde{f}_{2j+1,f}(u))=\alpha_{2j+1},$$ and
$$\deg(u\tilde{f}_{2j,f}(u))=\alpha_{2j}-\alpha_{2j-1}.$$
Now by Corollary \ref{KL-coro} we have    $$u_{2j+1}=\pm \langle b,a_{2j,f}\rangle=\pm uf_{2j,f}$$ and $$u_{2j+2}=\pm \langle b,b_{2j+1,f}\rangle=\pm u\tilde{f}_{2j+1, f},$$ 
which completes the proof.
\end{proof}
As we have seen in the proof of Proposition \ref{prop-uPolynomial}, $P_K(u)$ equals $u_{k+1}$ of either  $C[n_1,\cdots,n_k,1]$ or   $C[n_1,\cdots,n_k,2]$. Therefore we get the following corollary.
\begin{corollary}\label{degP(u)}
Let $K=C[n_1,\cdots,n_k]$ and $[n_1,\cdots,n_k]=\frac{\beta}{\alpha}$. Then 
\begin{enumerate}
\item [\rm (i)]  $u\,|\,P_K(u)$.
\item [\rm (ii)] $\deg P_K(u)=\alpha$.
\end{enumerate}
\end{corollary}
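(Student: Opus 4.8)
The plan is to deduce both statements directly from the already-established results on the $u_i$-sequence, using the identification of $P_K(u)$ with a single block determinant of a slightly enlarged diagram. Recall the observation made just before the corollary: after choosing an orientation-compatible integer $n_{k+1}\in\{1,2\}$, the rep-polynomial $P_K(u)$ equals $\pm u_{k+1}$ of the diagram $K'=C[n_1,\cdots,n_k,n_{k+1}]$. Once this identification is in place, there is essentially nothing left to compute: part (i) is an instance of Lemma \ref{degree}(ii) and part (ii) is an instance of Proposition \ref{degree2}, both applied to $K'$ rather than to $K$.

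For part (i), I would argue that since $P_K(u)=\pm u_{k+1}$ is the $(k+1)$-st block determinant of $K'$, and $K'$ has $k+1$ blocks, Lemma \ref{degree}(ii) applied to $K'$ gives $u\mid u_{k+1}$. Hence $u\mid P_K(u)$, which is exactly the assertion.

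For part (ii), I would apply Proposition \ref{degree2} to the diagram $K'$, obtaining $\deg(u_{k+1})=\alpha_{(k+1)-1}=\alpha_k$, where $\alpha_k$ is the denominator in $[n_1,\cdots,n_k]=\frac{\beta_k}{\alpha_k}$. The key observation — and the only point needing genuine care — is that $\alpha_k$ computed for $K'$ coincides with the given $\alpha$: the integer $\alpha_i$ in Proposition \ref{degree2} depends only on the truncated continued fraction $[n_1,\cdots,n_i]$, so appending the extra term $n_{k+1}$ does not alter $\alpha_k$, and the hypothesis $[n_1,\cdots,n_k]=\frac{\beta}{\alpha}$ forces $\alpha_k=\alpha$. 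Therefore $\deg P_K(u)=\deg u_{k+1}=\alpha_k=\alpha$.

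The main (and essentially only) obstacle is the index bookkeeping: one must check that the degree of the $(k+1)$-st block determinant is governed by $\alpha_k$ and not by $\alpha_{k+1}$, and that this $\alpha_k$ is insensitive to the auxiliary choice $n_{k+1}\in\{1,2\}$. Since $\deg u_{k+1}=\alpha_k$ does not involve $n_{k+1}$ at all, both admissible closures produce the same degree, so the corollary is well-posed independently of which enlarging diagram is used; this also quietly confirms that the two candidate values $P_K(u)=\pm u_{k+1}$ arising from $n_{k+1}=1$ and $n_{k+1}=2$ are consistent in degree.
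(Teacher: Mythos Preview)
Your proposal is correct and follows essentially the same approach as the paper: the paper's proof consists solely of the remark preceding the corollary that $P_K(u)$ equals $u_{k+1}$ of either $C[n_1,\cdots,n_k,1]$ or $C[n_1,\cdots,n_k,2]$, from which (i) follows by Lemma~\ref{degree}(ii) and (ii) by Proposition~\ref{degree2}. You have simply spelled out the index bookkeeping (that $\deg u_{k+1}=\alpha_k=\alpha$ is independent of the auxiliary $n_{k+1}$) that the paper leaves implicit.
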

If $K=C[n_1,\cdots,n_k]$ and  $C[n_1,\cdots,n_m], m>k$ have the same orientations on the arc corresponding to $a$ when we let the orientation of the arc corresponding to $b$ coincide,  
then the rep-polynomial $P_K(u)$ of $K$ and $u_{k+1}(u)$ of $C[n_1,\cdots,n_m]$ must be the same up to sign. If they have the opposite orientations on the arc corresponding to $a$,  $P_K(u)=\pm i^{\alpha}u_{k+1}(iu)$ when $[n_1,\cdots,n_k]=\frac{\beta}{\alpha}$. So we have the following.
\begin{corollary}\label{u_i-polynomial}
Let $K=C[n_1,\cdots,n_k]$  and $[n_1,\cdots,n_j]=\frac{\beta_j}{\alpha_j}, (\alpha_j,\beta_j)=1$. Then $u_{j+1}$ is either $P_\frac{\beta_j}{\alpha_j}(u)$ or $\pm i^{\alpha_j}P_\frac{\beta_j}{\alpha_j}(iu)$.
\end{corollary}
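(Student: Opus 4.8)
The plan is to deduce the statement directly from the paragraph immediately preceding it, applied to the truncation of the diagram at the $j$-th block. The starting observation is that the block determinant $u_{j+1}=\langle a_{j+1,0},b_{j+1,0}\rangle$ depends only on the part of the $\mathfrak{C}$-coloring lying in the first $j$ blocks: the two initial vectors $a_{j+1,0},b_{j+1,0}$ of the $(j+1)$-st block are produced from $a=a_{1,0},\,b=b_{1,0}$ by the quandle actions $X(\pm u_i)^{\pm1}$ coming only from $n_1,\cdots,n_j$. Consequently the polynomial $u_{j+1}(u)$ read off from $K=C[n_1,\cdots,n_k]$ coincides with the one read off from the shorter diagram $C[n_1,\cdots,n_j]$ regarded as the top part of $K$. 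In particular $\deg u_{j+1}=\alpha_j$ by \propref{degree2}, which already matches $\deg P_{\frac{\beta_j}{\alpha_j}}(u)=\alpha_j$ obtained from \cororef{degP(u)} applied to $C[n_1,\cdots,n_j]$; this degree bookkeeping will fix the exponent of the leading $i$ in the second case.

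First I would identify $P_{\frac{\beta_j}{\alpha_j}}(u)$ with the block determinant of the extended diagram. By definition $P_{\frac{\beta_j}{\alpha_j}}(u)=P_{C[n_1,\cdots,n_j]}(u)$, and the paragraph before the corollary, read with the roles $k\mapsto j$ and $m\mapsto k$, says precisely that this rep-polynomial equals, up to sign, the determinant $u_{j+1}$ of the longer diagram $C[n_1,\cdots,n_k]=K$, provided the arc corresponding to $a$ carries the same orientation in the two diagrams once the $b$-arcs are made to coincide. Thus the two objects in the corollary are already matched; what remains is the orientation analysis that splits into the two stated alternatives.

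The remaining work is the orientation bookkeeping. If the downward $a$-arc orientation used to define $P_{\frac{\beta_j}{\alpha_j}}$ agrees with the orientation of the corresponding arc of $K$, then $u_{j+1}=P_{\frac{\beta_j}{\alpha_j}}(u)$ after normalizing both to have positive leading coefficient, giving the first alternative. If the two orientations are opposite, reversing the $a$-arc multiplies its coloring vector by $i$, and since $\langle ia,b\rangle=i\langle a,b\rangle$ each block determinant is transformed by the substitution $u\mapsto iu$ together with an overall power of $i$; concretely the preceding paragraph records this as $P_{\frac{\beta_j}{\alpha_j}}(u)=\pm i^{\alpha_j}u_{j+1}(iu)$. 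Solving this identity for $u_{j+1}$, i.e. replacing $u$ by $-iu$, and then using the symmetry $P_{\frac{\beta_j}{\alpha_j}}(-u)=\pm P_{\frac{\beta_j}{\alpha_j}}(u)$ from the remark after \propref{prop-uPolynomial} together with $i^{-\alpha_j}=\pm i^{\alpha_j}$, rearranges it to $u_{j+1}(u)=\pm i^{\alpha_j}P_{\frac{\beta_j}{\alpha_j}}(iu)$, the second alternative.

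I expect the main obstacle to be exactly this last orientation/sign step: one must verify that reversing the single $a$-arc at the $(j+1)$-st block is the only possible discrepancy between the intrinsic orientation of $K$ and the downward orientation defining $P_{\frac{\beta_j}{\alpha_j}}$, and that the resulting transformation of the determinant polynomial is precisely $u\mapsto iu$ scaled by $i^{\alpha_j}$ and not some other power of $i$. Here the degree computation $\deg u_{j+1}=\alpha_j$ from \propref{degree2} is what pins down the exponent, so the argument is robust once the orientation comparison is carried out carefully.
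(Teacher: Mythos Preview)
Your proposal is correct and follows exactly the paper's own route: the corollary is deduced directly from the orientation paragraph immediately preceding it (with the index substitution $k\mapsto j$, $m\mapsto k$), and you simply spell out the inversion $P_{\frac{\beta_j}{\alpha_j}}(u)=\pm i^{\alpha_j}u_{j+1}(iu)\Rightarrow u_{j+1}(u)=\pm i^{\alpha_j}P_{\frac{\beta_j}{\alpha_j}}(iu)$ that the paper leaves implicit. The only extra content you add is the degree check via \propref{degree2} to pin down the exponent $\alpha_j$, which is harmless and consistent with the paper's reasoning.
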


\begin{definition}
The $u_i$-$sequence$ of $K=C[n_1,\cdots,n_k]$ is defined as the sequence of polynomials in $\mathbb Z[u]$, $$(u_1,u_2,\cdots,u_k),$$ 
where $u_1(u)=u$.
We will call the sequence of numbers, $$(r,u_2(r),\cdots,u_k(r))$$ for a non-zero root of the rep-polynomial of $K$, the $u_i(r)$-$sequence$ of $K$.
\end{definition}
We can observe that the $u_i$-sequence of $K=C[n_1,\cdots,n_k]$ is 
\begin{equation}\label{ui-sequence}
(u,\pm\epsilon_1^{\alpha_1} P_{\frac{\beta_1}{\alpha_1}}(\epsilon_1u), \pm\epsilon_2^{\alpha_2} P_{\frac{\beta_2}{\alpha_2}}(\epsilon_2u),\cdots,\pm\epsilon_{k-1}^{\alpha_{k-1}} P_{\frac{\beta_{k-1}}{\alpha_{k-1}}}(\epsilon_{k-1}u)),
\end{equation}
where $\epsilon_j\in\{1,i\}$
for each $j=1,2,\cdots,k-1$.
\begin{remark}
For each root $r$ of $P_K(u)$ and for each $i$, 
$u_i(r)$ is related to the trace of the element $A_i$ in $SL(2,\bc)$ 
corresponding to the loop rotating the $i$-th block horizontally by 1 full turn. More precisely, the trace of $A_i$ is equal to $2-u_i(r)^2$,
$$tr A_i=2-u_i(r)^2.$$
\end{remark}
The following is an immediate consequence of the definitions for $u_i$ and $P_K(u)$.
\begin{proposition}
Let  $K$ be $C[n_1,n_2,\cdots, n_k]$ and $K'$ be its upside-down. Suppose  $P_K(u)$ and $P_{K'}(u)$ are their rep-polynomials and $(u_1,u_2,\cdots,u_k)$ and $(u'_1,u'_2,\cdots,u'_k)$ are their $u_i$-sequences. 
Then $P_{K'}(u)=P'_K(u)$ and satisfies the following.
\begin{enumerate}
\item [\rm (i)] If $P_K(r)=0$ then $P'_K(u_k(r))=0$ and $u'_k(u_k(r))^2=r^2$.
\item [\rm (ii)] If $P'_K(s)=0$ then $P_K(u'_k(s))=0$ and $u_k(u'_k(s))^2=s^2$.
\end{enumerate}
\end{proposition}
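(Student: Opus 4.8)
The first assertion, $P_{K'}(u)=P'_K(u)$, is immediate: by definition $K'$ is the upside-down diagram $C[(-1)^{k+1}n_k,\cdots,(-1)^{k+1}n_1]$, and Theorem~\ref{2-rep-poly}(ii) identifies $P'_K(u)$ as exactly the rep-polynomial of that diagram. So it suffices to prove (i); statement (ii) then follows by applying (i) to $K'$, whose upside-down is $K$ again. The plan for (i) is to use the correspondence of Proposition~\ref{prop-uPolynomial} between nonzero roots of a rep-polynomial and non-abelian $\mathfrak{C}$-colorings, together with the fact that the upside-down operation is a rigid motion of the diagram that merely reverses the order of the blocks.

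A nonzero root $r$ of $P_K$ produces a $\mathfrak{C}$-coloring of $K$ with $\langle a,b\rangle=r$ whose block determinants, read top to bottom, are $u_1(r),\dots,u_k(r)$ with $u_1(r)=r$. Flipping the diagram upside down leaves every crossing, hence every local relation (\ref{quandle-eq1})--(\ref{quandle-eq2}), intact (after multiplying the vectors on any orientation-reversed strand by $i$), so the same assignment is a $\mathfrak{C}$-coloring of $K'$ in which the blocks now appear in reverse order. To control the signs and the factors of $i$ that this transport may introduce, I would invoke the trace interpretation recorded in the remark preceding this proposition: $u_i(r)^2=2-tr\,A_i$, where $A_i\in SL(2,\bc)$ is the image of the full-turn loop around the $i$-th block, a conjugation-invariant quantity. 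Since $K$ and $K'$ present the same link, $G(K)=G(K')$, the parabolic representation attached to $r$ is the one attached to some root $s$ of $P'_K=P_{K'}$, and the full-turn loop of the $i$-th block of $K$ is conjugate (up to inversion) to that of the $(k{+}1{-}i)$-th block of $K'$. Hence $u'_{k+1-i}(s)^2=u_i(r)^2$ at the matched roots; taking $i=k$ gives $s^2=u'_1(s)^2=u_k(r)^2$, and taking $i=1$ gives $u'_k(s)^2=u_1(r)^2=r^2$.

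It remains to remove the ambiguities. From $s^2=u_k(r)^2$ we get $s=\pm u_k(r)$, and since $P'_K(s)=0$ forces $P'_K(-s)=0$ (a coloring from $a',b'$ gives one from $a',-b'$), we obtain $P'_K(u_k(r))=0$. Because each $u_i$ has definite parity (its nonzero roots occur in $\pm$ pairs, as $P_K(r)=0$ implies $P_K(-r)=0$), we also get $u'_k(u_k(r))^2=u'_k(\pm s)^2=u'_k(s)^2=r^2$; the degenerate case $r=0$ is trivial since $u\mid u_i$ forces $u_k(0)=0=u'_k(0)$ and $0$ is always a root. The step I expect to be the main obstacle is precisely the geometric input to the trace argument: verifying that the full-turn loop around block $i$ of $K$ and block $k{+}1{-}i$ of $K'$ determine the same conjugacy class in $G(K)=G(K')$, and keeping honest track of the orientation-induced factors of $i$ during the transport of the coloring across the flip. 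Once that is secured, everything downstream is routine.
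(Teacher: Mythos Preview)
Your argument is correct and amounts to a careful unpacking of what the paper intends; in fact the paper offers no proof at all, merely declaring the proposition ``an immediate consequence of the definitions for $u_i$ and $P_K(u)$.'' The concern you flag is not a real obstacle: the half-rotation is a homeomorphism of the pair $(S^3,K)$ carrying the $i$-th block of $K$ to the $(k{+}1{-}i)$-th block of $K'$, so the corresponding full-turn loops are conjugate in $G(K)=G(K')$ and the trace comparison $u_i(r)^2=u'_{k+1-i}(s)^2$ goes through directly.
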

\begin{lemma}\label{reversed knot}

Let  $K$ be $C[n_1,n_2,\cdots, n_m]$ with a fixed orientation and $-K$ be the orientation-reversed diagram of $K$. Let  $P_i(u)$ and $\tilde{P}_i(u)$ be the $u_i$ of $K$ and $-K$. Then for each $i=1,\cdots,m$, $$\tilde{P}_i(u)=-P_i(-u)=\pm P_i(u).$$
\end{lemma}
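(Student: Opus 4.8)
The plan is to follow how orientation reversal transforms the arc coloring vectors and then reparametrize the determinant $u$. First I would recall from Section~\ref{BPrep-2brKnots} (and Section 3) that reversing the orientation of the whole diagram replaces each meridian generator by its inverse, and that $T(A^{-1})=[ia]$ whenever $T(A)=[a]$; consequently every arc vector gets multiplied by $i$. Thus, writing $a_{i,j},b_{i,j}$ for the vectors produced for $K$ from the initial pair $a=a_{1,0},\,b=b_{1,0}$ with $u=\langle a,b\rangle$, the coloring of $-K$ is realized by the lifts $ia_{i,j},\,ib_{i,j}$ --- this is precisely the $1$--$1$ correspondence used in the proof of Proposition~\ref{u-polys}. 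Since $\langle ia,ib\rangle=i^{2}\langle a,b\rangle=-u$, the initial determinant for $-K$ is $-u$, and its $i$-th block determinant is $\langle ia_{i,0},ib_{i,0}\rangle=-u_i=-P_i(u)$.

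Next I would use that the polynomials $f_{i,j},g_{i,j},\tilde f_{i,j},\tilde g_{i,j}$, and hence each $u_i$, depend only on the diagram together with its orientation and not on the particular choice of initial vectors; so for \emph{any} starting pair the $i$-th block determinant is obtained by evaluating the corresponding $u_i$-polynomial at the initial determinant. Applied to the diagram $-K$, whose initial determinant is $-u$, this reads $\langle ia_{i,0},ib_{i,0}\rangle=\tilde P_i(-u)$. Comparing with the previous paragraph gives $\tilde P_i(-u)=-P_i(u)$, and replacing $u$ by $-u$ yields the first asserted identity $\tilde P_i(u)=-P_i(-u)$.

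For the second identity I would exploit the symmetry $b\mapsto -b$ on the \emph{same} oriented diagram $K$. In the reduced quandle $\mathfrak{C}$ one has $[-b]=[b]$, so by the remark following Proposition~\ref{u-polys} the pair $(a,-b)$ produces literally the same $\mathfrak{C}$-coloring as $(a,b)$; hence every arc vector, and in particular $u_i$, is unchanged up to sign. On the other hand the initial determinant of this coloring is $\langle a,-b\rangle=-u$, so by the universal property its $i$-th block determinant equals $P_i(-u)$. Therefore $P_i(-u)=\pm P_i(u)$, which combined with the first identity gives $-P_i(-u)=\pm P_i(u)$, completing the chain $\tilde P_i(u)=-P_i(-u)=\pm P_i(u)$.

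The step I expect to require the most care is the sign bookkeeping: one must keep the genuine factor $i^{2}=-1$ coming from orientation reversal separate from the harmless overall sign inherent in the reduced coloring (and in the up-to-sign monic normalization of Lemma~\ref{degree}), and one must justify evaluating a fixed $u_i$-polynomial at a shifted determinant value for arbitrary initial vectors. Once these conventions are pinned down, both equalities fall out of the two substitutions $u\mapsto -u$.
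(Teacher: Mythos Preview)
Your argument is correct. The derivation of the first identity $\tilde P_i(u)=-P_i(-u)$ via the substitution $a_{i,j}\mapsto ia_{i,j}$ is exactly the paper's proof.

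For the second identity $-P_i(-u)=\pm P_i(u)$ you take a different route. The paper simply invokes Corollary~\ref{u_i-poly} (a forward reference), which says that each $u_i$ is either an even or an odd polynomial in $u$; this in turn rests on the identification of $u_i$ with a rep-polynomial of a sub-diagram and on the relations (\ref{knot-Riley}), (\ref{link-Riley}) with the Riley polynomial. Your argument instead stays inside the quandle picture: replacing $b$ by $-b$ gives the same $\mathfrak C$-coloring up to sign, so the block determinants change only by a global sign, while the initial determinant becomes $-u$; comparing yields $P_i(-u)=\pm P_i(u)$ directly. This is more self-contained (it avoids the forward reference and the Riley-polynomial input) and is essentially the observation already used just before Proposition~\ref{u-polys}. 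The only point worth making explicit is that the sign picked up by each arc vector under $b\mapsto -b$ is determined purely by the combinatorics of the diagram, hence is independent of $u$; this guarantees that the $\pm$ in $P_i(-u)=\pm P_i(u)$ is a fixed sign and not a function of $u$, so the identity holds as polynomials.
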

\begin{proof}
If $\{a_{i,j}, b_{i,j}\}$ is a $\mathfrak{C}$-coloring on $K$, then $\{ia_{i,j}, ib_{i,j}\}$ is a $\mathfrak{C}$-coloring on $\tilde{K}$. Hence
$$\tilde{P}_i(-u)= \langle ia_{i,j}, ib_{i,j}\rangle=-\langle a_{i,j}, b_{i,j}\rangle=-P_i(u).   $$
Since $P_i(u)$ is an even polynomial or an odd polynomial by Corollary \ref{u_i-poly}, we get
$$\tilde{P}_i(u)=-P_i(-u)=\pm P_i(u).$$
\end{proof}

\begin{proposition}\label{rotation}
Let $K=C[n_1,n_2,\cdots, n_m]$ with a fixed orientation and $K'$ be the upside-down of $K$. Suppose that 
$r$ is a non-zero root of the rep-polynomial of $K$.
Then  $(u_m(r),\cdots, u_2(r),r)$ is the $u_i(u_m(r))$-sequence of $K'$ up to sign.
\end{proposition}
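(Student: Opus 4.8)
The plan is to produce both $u_i$-sequences from a single $\mathfrak{C}$-coloring, exploiting the fact that each block determinant depends only on the determinant of the initial pair and not on the order in which the diagram is traversed. I would start by fixing the $\mathfrak{C}$-coloring $\mathcal{C}$ on $K=C[n_1,\cdots,n_m]$ with $\langle a_{1,0},b_{1,0}\rangle=r$ determined by the non-zero root $r$ (Proposition \ref{prop-uPolynomial}); by definition the determinant of its $i$-th block is $u_i(r)$. The crucial preliminary observation is that, by the construction $a_{i,j}=f_{i,j}(u)a+g_{i,j}(u)b$, $b_{i,j}=\tilde f_{i,j}(u)a+\tilde g_{i,j}(u)b$ of Lemma \ref{degree}, every determinant of two arc vectors equals $(f\tilde g-g\tilde f)(u)\cdot u$, hence is a polynomial in $u=\langle a,b\rangle$ alone, with coefficients depending only on the diagram. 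Consequently \emph{any} $\mathfrak{C}$-coloring on a fixed diagram whose initial determinant equals $s$ has its $j$-th block determinant equal to the value $u_j(s)$ of the corresponding $u_j$-polynomial, regardless of the particular initial vectors chosen.

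Next I would perform the upside-down flip. Since the upside-down of $C[n_1,\cdots,n_m]$ is $C[(-1)^{m+1}n_m,\cdots,(-1)^{m+1}n_1]$, the block occupying the $j$-th position of $K'$ is precisely the $(m{+}1{-}j)$-th block of $K$ after the half-rotation. The rotation carries $\mathcal{C}$ to a coloring $\mathcal{C}'$ of $K'$: the arc vectors are reused, each multiplied by $i$ when the flip reverses the orientation of its strand (using $T(A^{-1})=[ia]$ from Section 3), and the local quandle relations are preserved, exactly as in the orientation analysis already used for the mirror in Proposition \ref{u-polys}. Because $\langle ix,iy\rangle=-\langle x,y\rangle$, the determinant of every block is preserved up to sign under the flip; in particular the top pair of $\mathcal{C}'$ is the flipped bottom pair $a_{m,f},b_{m,f}$ of $\mathcal{C}$, whose determinant is $\langle a_{m,f},b_{m,f}\rangle=u_m$ by (\ref{quandle-block}), so the initial determinant of $\mathcal{C}'$ is $s=\pm u_m(r)$.

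Combining the two observations finishes the argument. On one hand the $j$-th block determinant of $\mathcal{C}'$ equals $u'_j(s)$ by the preliminary observation applied to $K'$; on the other hand it equals $\pm u_{m+1-j}(r)$ by the flip. Since the $u'_j$ are even or odd polynomials (Lemma \ref{reversed knot}), $u'_j(\pm u_m(r))=\pm u'_j(u_m(r))$, and we obtain $u'_j(u_m(r))=\pm u_{m+1-j}(r)$ for every $j=1,\cdots,m$. Reading this for $j=1,\cdots,m$ says exactly that the $u_i(u_m(r))$-sequence $(u'_1(s),\cdots,u'_m(s))$ of $K'$ (legitimate because $s=u_m(r)$ is a root of $P_{K'}=P'_K$ by the preceding proposition) equals $(u_m(r),u_{m-1}(r),\cdots,u_2(r),r)$ up to sign; the endpoints $j=1,m$ recover $u'_1(s)=s$ and $u'_m(u_m(r))^2=r^2$ already recorded there.

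The step I expect to be the main obstacle is the sign-and-orientation bookkeeping in the flip: one must verify that the half-rotation together with the factors $i$ for reversed strands really carries the standard quandle-colored $K$ to the standard quandle-colored $K'$, and that it contributes only an overall sign to each block determinant rather than an uncontrolled power of $i$. This is the same kind of case analysis already carried out for the endpoints in the preceding proposition and for the mirror in Proposition \ref{u-polys}, so I would model it on those. A minor point to dispatch is the degenerate case $u_m(r)=0$, where the non-abelian normalization (\ref{normalize-nonabelian}) is unavailable; here the identity $u'_m(u_m(r))^2=r^2\neq 0$ keeps the relevant colorings non-degenerate, and the reversal can be checked directly. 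As a cross-check, the comparison can be recast trace-theoretically: by the Remark preceding the statement $u_i(r)^2=2-tr A_i$ and $u'_j(s)^2=2-tr A'_j$, and since the block loop $A'_j$ of $K'$ is isotopic, up to inverse, to the block loop $A_{m+1-j}$ of $K$ in the common link complement, the two traces coincide and yield $u'_j(u_m(r))^2=u_{m+1-j}(r)^2$ in one stroke.
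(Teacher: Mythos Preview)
Your proposal is sound, and in fact your ``cross-check'' is closer to the paper's actual argument than your main line. The paper does not transport the full coloring through the flip; instead it splits the blocks by parity. For the odd-indexed determinants it uses Corollary~\ref{KL-coro} to write $u_{2i+1}=\langle a_{2i,f},b_{1,0}\rangle$ and then observes that the arcs carrying $a_{2i,f}$ and $b_{1,0}$ are \emph{outermost} arcs of the Conway diagram, which the half-rotation sends to outermost arcs; hence these vectors are unchanged in $\mathfrak C$ up to sign and so is $u_{2i+1}$, with no orientation bookkeeping needed. For the even-indexed $u_{2i}$, whose defining arcs lie in the interior and genuinely get reshuffled by the flip, the paper uses exactly your trace argument: the product $A_{2i,j}B_{2i,j}$ is independent of $j$ and is unchanged (as an element of the link group) by the rotation, so $u_{2i}^2=2-\mathrm{tr}(A_{2i,j}B_{2i,j})$ is preserved.

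So your trace observation is not a mere sanity check but is half of the paper's proof, and indeed it handles all blocks at once, which is arguably cleaner than the paper's hybrid. Your main flip-the-coloring argument also works once the bookkeeping is done: with the orientation on $K'$ induced by the rotation, no $i$-factors appear (the rotation reverses the up--down direction of every arc simultaneously), so each block determinant changes by at most a sign. What the paper's parity split buys is that the odd-block case avoids traces entirely; what your uniform approach buys is a single mechanism for all blocks.
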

\begin{proof}
Suppose that $\{a_{i,j}, b_{i,j}\mid i=1,\cdots,m, \ j=0,\cdots, n_i\}$ is a $\mathfrak{C}$-coloring on $K$  such that
$\langle a_{1,0},b_{1,0}\rangle=r$, and $\{A_{i,j}, B_{i,j}\}$ are the elements in $SL(2,\mathbb C)$ which correspond to the representation. Then the outermost coloring vectors are unchanged by the half rotation up to sign, that is, 
$[a_{1,0}], [b_{1,0}], [a_{2i,j}], [a_{m,f}], [b_{m,f}]$ and $[a_{2i,f}]$ are all unchanged  for any $2i\leq m$ and $j=0,\cdots, n_{2i}$. This implies that each $u_{2i+1}=\langle a_{2i,f}, b_{1,0}\rangle$ is also unchanged by the half rotation up to sign. 

For any $i$ such that $2i\leq m$,
$$A_{2i,0}B_{2i,0}=\cdots=A_{2i,j}B_{2i,j}=\cdots=A_{2i,n_{2i}}B_{2i,n_{2i}}$$
and $A_{2i,j}B_{2i,j}$ is not changed by the half rotation. Hence the half rotations preserve  each  
$u_{2i}^2=2-tr(A_{2i,j}B_{2i,j})$.  


\end{proof}


\subsection{Non-abelian representations} \label{na-rep} 
We have seen in the previous section that the degree of the rep-polynomial  $P_K(u)$ of  $K=C[n_1,\cdots,n_k]=S(\alpha,\beta)$  is $\alpha$ and $u$ is a factor of  $P_K(u)$. Therefore if $K$ is a knot,  then $u^m\nmid P_K(u)$ for any $m>1$ and  all the roots of $\frac{1}{u}P_K(u)$ give non-abelian representations of $K$, since the degree of the  Riley Polynomial $\mathcal R(y)$ of $K$ is $\frac{\alpha-1}{2}$ and $y=u^2$. We give a direct proof for this here:

\begin{lemma}\label{ufactor}
Let $P_K(u)$ be the rep-polynomial of $K=C[n_1,\cdots,n_k]$. Then
 $u^2 \,|\, P_K(u)$ if and only if $K$ is a link. Furthermore, if $K$ is a knot then  $\frac{1}{u}P_K(u)\in \mathbb Z[u]$ is a monic polynomial whose constant term is either $1$ or $-1$.
\end{lemma}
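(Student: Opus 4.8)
The plan is to read the constant term of $\frac1uP_K(u)$ directly off the coloring evaluated at $u=0$, and to show it is a unit precisely when $K$ is a knot. I first record the cheap structural facts. By Corollary \ref{degP(u)} we have $u\mid P_K(u)$ and $P_K\in\mathbb Z[u]$ is monic of degree $\alpha$, so $\frac1uP_K(u)\in\mathbb Z[u]$ is automatically monic with integer coefficients; its constant term is the coefficient of $u$ in $P_K(u)$, namely $\frac1uP_K(0)$. From the definition $P_K=\pm\langle b_{k,f},b\rangle=\pm u\,\tilde f_{k,f}$ for $k$ odd and $P_K=\pm\langle a_{k,f},b\rangle=\pm u\,f_{k,f}$ for $k$ even, this constant term equals, up to sign, $\tilde f_{k,f}(0)$ (resp. $f_{k,f}(0)$) — that is, the $a$-coefficient at $u=0$ of the final output vector $b_{k,f}$ (resp. $a_{k,f}$). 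So everything reduces to understanding the coloring at $u=0$.

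Next I would use that the coloring degenerates completely there. By Lemma \ref{degree} we have $u\mid u_i$, hence $u_i(0)=0$, so each block map $\prod X(\delta_iu_i)^{\pm1}$ over its $|n_i|$ crossings specializes at $u=0$ to $X(0)^{n_i}$, where $X(0)=\left(\begin{smallmatrix}0&-1\\1&0\end{smallmatrix}\right)=:R$ has order $4$. Applying Lemma \ref{SL-Chebyshev} with $\mathrm{tr}\,X(0)=0$ and the values $p_{2m}(0)=0$, $p_{2m+1}(0)=(-1)^m$ of Lemma \ref{cheby-prop}(ii) gives $X(0)^{n_i}=(-1)^{n_i/2}I$ when $n_i$ is even and $X(0)^{n_i}=(-1)^{(n_i-1)/2}R$ when $n_i$ is odd. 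Writing each arc vector at $u=0$ in the basis $\{a,b\}$, the initial columns are $a=e_1,b=e_2$, and right multiplication by $\pm I$ or $\pm R$ only permutes the two columns up to sign; since the inter-block connections of the Conway diagram merely reselect bottom vectors together with the two outer strands, every arc vector at $u=0$ is $\pm e_1$ or $\pm e_2$. Call these the $a$-type and $b$-type arcs; a block preserves the two strands' types if $n_i$ is even and swaps them if $n_i$ is odd.

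The core step is then a parity bookkeeping. Set $d_i:=\tfrac{u_i}{u}(0)$; from $\langle a_{i,0},b_{i,0}\rangle=u_i$ we get $d_i=\det M_{i,0}(0)$, where $M_{i,0}$ is the coefficient matrix of the two top vectors of block $i$, so $d_i\in\{0,\pm1\}$ and $d_i\neq0$ exactly when the two top strands have opposite types. I would prove by induction on $i$, using the tangle recursion $\alpha_i=n_i\alpha_{i-1}+\alpha_{i-2}$ from the proof of Proposition \ref{degree2} together with the type-swapping rule, that the two top strands of block $i$ have opposite types if and only if $\alpha_{i-1}$ is odd; the base case $u_1=u$, $\alpha_0=1$ is immediate, and the junction $(a_{2,0},b_{2,0})=(a_{1,0},a_{1,f})$ (so that $d_2\neq0\iff a_{1,f}$ is $b$-type $\iff n_1=\alpha_1$ odd) shows the step matches the parity recursion. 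Applied to the output strand, whose $a$-coefficient is exactly $\pm\frac1uP_K(0)$, this yields $\frac1uP_K(0)=\pm1$ when $\alpha=\alpha_k$ is odd and $\frac1uP_K(0)=0$ when $\alpha$ is even. As $S(\alpha,\beta)$ is a knot precisely when $\alpha$ is odd, this proves both claims simultaneously: $u^2\mid P_K$ iff $K$ is a link, and $\frac1uP_K$ has constant term $\pm1$ when $K$ is a knot.

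The step I expect to require the most care is the inter-block bookkeeping and the attendant signs: to run the induction cleanly one must identify, from the Conway diagram, exactly which bottom vector of block $i$ and which outer strand form the top pair of block $i+1$, and then verify that type-propagation is consistent across all four parity cases of $(\alpha_{i-1},\alpha_{i-2})$. As an independent and much quicker route for the link half, one can instead invoke that each $u_i$ (hence $P_K$) is an even or odd polynomial (Corollary \ref{u_i-polynomial}, cf. Lemma \ref{reversed knot}): being monic of degree $\alpha$ forces $P_K$ to be even when $\alpha$ is even, whereupon $u\mid P_K$ upgrades at once to $u^2\mid P_K$, leaving only the knot direction to the type argument above.
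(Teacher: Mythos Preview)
Your approach is essentially the paper's: both arguments specialize the coloring to $u=0$, observe that every arc vector degenerates to $\pm a$ or $\pm b$, and then read off whether the relevant output coefficient $f_{k,f}(0)$ or $\tilde f_{k,f}(0)$ is $\pm1$ or $0$. The paper does this last step pictorially (Figure~\ref{u=0}): at $u=0$ each crossing just swaps labels, so the type of an arc records which of the two ``virtual components'' it lies on; the output arc being $a$-type versus $b$-type is then literally the knot/link dichotomy, no recursion needed. Your version replaces this picture with an explicit induction on $i$ via the parity of $\alpha_i$ and the recursion $\alpha_i=n_i\alpha_{i-1}+\alpha_{i-2}$, which is correct but, as you yourself flag, makes the inter-block bookkeeping the delicate step. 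Both arguments buy the same conclusion; the paper's is shorter because it trades your case analysis for a topological observation.

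One caution on your ``quicker route'' for the link half: the statement that $P_K$ is an even or odd polynomial is Corollary~\ref{u_i-poly}, which in the paper is placed \emph{after} Lemma~\ref{ufactor} and is deduced from the identifications (\ref{knot-Riley})--(\ref{link-Riley}); likewise the second equality in Lemma~\ref{reversed knot} cites Corollary~\ref{u_i-poly}. So invoking even/odd parity here would be circular within the paper's logical order. If you want to keep that shortcut, you would need an independent proof that $P_K(-u)=(-1)^{\alpha}P_K(u)$ that does not pass through Lemma~\ref{ufactor}; otherwise, stick with your main type-tracking argument, which is self-contained.
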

\begin{proof} 
 Note that 
$$P_K(u)=\pm uf_{k,f}(u)  \quad \text{when} \quad k \quad  \text{is even}$$
and
$$P_K(u)=\pm u\tilde{f}_{k,f}(u)  \quad \text{when} \quad k \quad  \text{is odd}.$$
Now  we assume that  $\langle a,b\rangle=0$. Then $\{a_{i,j}, b_{i,j}\}\subset\{a, -a, b, -b\}$ for all $i,j$.
If  $K$ is a link, then 
$$a_{k,f}=\pm b\ (k: \text{even}) \quad \text{or} \quad b_{k,f}=\pm b\ (k :  \text{odd}),$$
which implies 
$$f_{k,f}(0)= 0\quad   \text{and} \quad g_{k,f}(0)= \pm 1  \quad (k: \text{even})
$$
or $$\tilde{f}_{k,f}(0)= 0\  \quad \text{and} \quad \tilde{g}_{k,f}(0)= \pm 1\quad  (k :  \text{odd}).$$ 
(See Figure \ref{u=0}.)
This proves that if $K$ is a link then $\frac{1}{u}P_K(u)|_{u=0}=0$ 
and thus $u^2$ divides $P_K(u)$. 

\begin{figure}[hbt]
\begin{center}
\scalebox{0.5}{\includegraphics{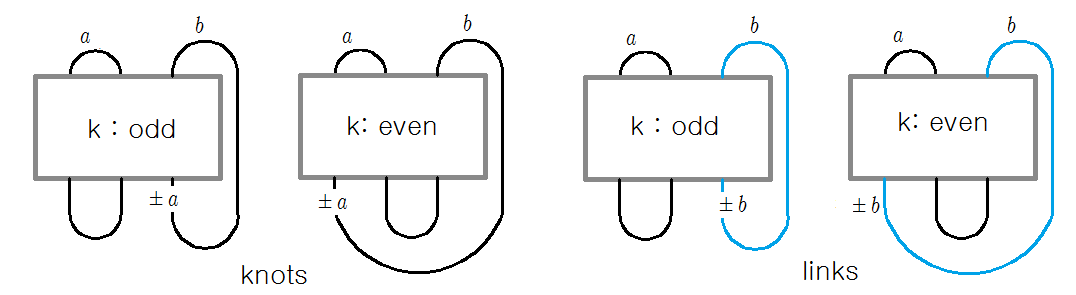}}
\end{center}
\caption{$u=0$}\label{u=0}
\end{figure}

If $K$ is a knot, 
 then  
$$a_{k,f}=\pm a\ (k: \text{even}) \quad \text{or} \quad b_{k,f}=\pm a\ (k :  \text{odd}),$$
which implies that 
\begin{equation}\label{unitsEq}
f_{k,f}(0)=\pm 1 \neq 0\  (k: \text{even}) \quad \text{or} \quad\tilde{f}_{k,f}(0)=\pm 1 \neq 0\  (k :  \text{odd})
\end{equation}
 and thus   there is $c_{2k} \in \mathbb Z, k=1,2,\cdots,\frac{\alpha-3}{2}$ such that 
\begin{equation}\label{monic-riley}
P_K(u)=u(u^{\alpha-1}+c_{\alpha-3}u^{\alpha-3}+\cdots+c_2u^2\pm1).
\end{equation}
This proves 
$$\frac{1}{u}P_K(u)|_{u=0}\neq 0,$$
and the last statement.
\end{proof}
\begin{theorem}\label{NA-rep} Let $P_K(u)$ be the rep-polynomial of a knot $K=C[n_1,\cdots,n_k]$. Then there is a 1-1 correspondence between  the set of  the squares of the roots of $\frac{1}{u}P_K(u)$ and the set of  the non-abelian representations of $K$. 
\end{theorem}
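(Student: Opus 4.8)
The plan is to identify non-abelian parabolic representations of $K$ (up to conjugacy) with $\mathfrak{C}$-colorings carrying $\langle a,b\rangle=u\neq 0$, and then to read off the bijection from Proposition~\ref{prop-uPolynomial} and Lemma~\ref{ufactor}. Recall that after conjugation a non-abelian parabolic representation is put in the normal form (\ref{parabolic-rep}), so it is completely determined by the single number $y=u^2$; equivalently, by the quandle isomorphism $T$ of Section~3 it is the same datum as a $\mathfrak{C}$-coloring with initial vectors $a=a_{1,0},\ b=b_{1,0}$ and $\langle a,b\rangle=u$, $u^2=y$. By Proposition~\ref{prop-uPolynomial} such a coloring exists precisely when $P_K(u)=0$, the root $u=0$ being the abelian one; and since $K$ is a knot, Lemma~\ref{ufactor} guarantees $u^2\nmid P_K(u)$, so the nonzero roots of $P_K(u)$ are exactly the roots of $\frac{1}{u}P_K(u)$. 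I therefore consider the squaring map $\Phi$ sending a root $r$ of $\frac{1}{u}P_K(u)$ to the representation $\rho_{r^2}$ with $y=r^2$, and aim to show that $\Phi$ induces a bijection between the set of squares $\{r^2\}$ and the set of non-abelian representations.

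First I would check that $\Phi$ is well defined on squares and surjective. Well-definedness is immediate: $\rho_{r^2}$ depends only on $r^2$, and by the note following Proposition~\ref{prop-uPolynomial} we have $P_K(r)=0\Rightarrow P_K(-r)=0$, so the two roots $\pm r$ sharing the square $r^2$ are sent to the \emph{same} representation. For surjectivity, take any non-abelian parabolic representation, normalize it as in (\ref{parabolic-rep}) with $y\neq 0$, and set $u=\sqrt{y}$; the coloring vectors $a=\binom{1}{0},\ b=\binom{0}{u}$ then satisfy every quandle relation, because under $T$ these relations are exactly the images of the Wirtinger relations, which hold since $\rho$ is a homomorphism. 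Hence this coloring closes up, so $P_K(u)=0$ with $u\neq 0$, i.e. $y=u^2$ is the square of a root of $\frac{1}{u}P_K(u)$.

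Next, injectivity. If $r_1^2\neq r_2^2$ are two squares of roots, then by (\ref{3-1}) the associated representations have $tr(\rho(ab))=2-\langle a,b\rangle^2=2-r_j^2$, and these two traces differ; since $tr(\rho(ab))$ is a conjugacy invariant, the representations are non-conjugate, hence distinct. Combined with the previous paragraph this shows $\Phi$ descends to a bijection between the set of squares of the roots of $\frac{1}{u}P_K(u)$ and the set of non-abelian representations; note that these squares coincide with the squares of the nonzero roots of $P_K(u)$, since dividing by $u$ only discards the single abelian root at $0$.

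The step I expect to be the real content is the surjectivity argument, that is, making precise that an abstract non-abelian representation \emph{arises} from a coloring that closes up. This is where the dictionary of Section~3 does the work: the isomorphism $T$ between $(\mathcal{P},\text{conjugation})$ and $(\mathfrak{C},\langle\,,\rangle)$ turns each Wirtinger relation into a consistency condition for the $\mathfrak{C}$-coloring, so that ``the relations hold'' becomes ``the coloring closes up'', which is exactly the condition encoded by $P_K(u)=0$ in Proposition~\ref{prop-uPolynomial}. A secondary point one must handle with care is that different normalizations of the same representation do not create spurious $y$-values; this is controlled by the conjugacy invariant $tr(\rho(ab))=2-y$, which recovers $y$ intrinsically. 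Once these are in place the remaining bookkeeping is automatic, and no repeated-root analysis is needed since the statement concerns sets.
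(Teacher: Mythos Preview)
Your proof is correct and takes a genuinely different route from the paper's. The paper's argument is a counting argument: by Lemma~\ref{ufactor} and Corollary~\ref{degP(u)} the polynomial $\frac{1}{u}P_K(u)$ has degree $\alpha-1$, the same as $\mathcal{R}(u^2)$, and since Riley proved in \cite{Riley1} that $\mathcal{R}(y)$ has no repeated roots there are exactly $\frac{\alpha-1}{2}$ non-abelian parabolic representations; matching cardinalities then forces the correspondence (each nonzero root gives a representation) to be a bijection. Your approach instead establishes surjectivity directly via the quandle--representation dictionary of Section~3 and injectivity via the conjugacy invariant $tr(\rho(ab))=2-r^2$ from (\ref{3-1}). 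The advantage of your route is that it is self-contained: it does not appeal to Riley's simple-roots result, and as you observe, no multiplicity analysis is needed since the statement is at the level of sets. The paper's route is shorter once that external input is granted, and it implicitly yields the stronger identification $\frac{1}{u}P_K(u)=\pm\mathcal{R}(u^2)$ made explicit in Remark~\ref{Riley}.
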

\begin{proof}
By Lemma \ref{ufactor}, all the roots of $\frac{1}{u}P_K(u)$ are nonzero. Since each root of  $P_K(u)$ corresponds to a $\mathfrak{C}$-coloring on $K$ by Proposition \ref{prop-uPolynomial},  all the roots of $\frac{1}{u}P_K(u)$ give non-abelian representations of $K$. This correspondence is bijective, because the degree of  $\frac{1}{u}P_K(u)$ is equal to that of $\mathcal{R}(u^2)$, and $\mathcal{R}(y)$ has no repeated roots.
\end{proof}

\begin{remark}\label{Riley}
If we take any Conway's normal form $K=C[n_1,\cdots,n_k]$ of a 2-bridge knot $K=S(\alpha,\beta)$ such that 
$$[n_1,n_2,\cdots,n_k]=\pm\frac{\beta}{\alpha} \,\, \text{or}\,\, \pm\frac{\alpha-\beta}{\alpha},$$ then we can obtain the Riley polynomial of $K$ from our polynomial $\frac{1}{u}P_K(u)$ by converting $u^2$ into $y$. 
That is,
\begin{equation}\label{knot-Riley}
\frac{1}{u}P_{\frac{\beta}{\alpha}}(u)=\frac{1}{u}P_{\frac{\alpha-\beta}{\alpha}}(u)=\pm\mathcal{R}(u^2)=\pm W_{11}(u^2).
\end{equation}
Note that 
the relation $W\rho(a)W^{-1}=\rho(w)\rho(a)\rho(w)^{-1}=\rho(b)$ with (\ref{knot-group}) and (\ref{parabolic-rep}) implies that  $$\left[\begin{array}{c}
W_{11}(u^2)\\
W_{21}(u^2)  
 \end{array}\right]=T(W\rho(a)W^{-1})=T(\rho(b))=\left[\begin{array}{c}
0\\
u  
 \end{array}\right],$$
 and $$\langle \begin{pmatrix}
W_{11}(u^2)\\
W_{21}(u^2)  
\end{pmatrix},\begin{pmatrix}
0\\
u  
 \end{pmatrix} \rangle=uW_{11}(u^2)=\pm P_{\frac{\beta}{\alpha}}(u).$$

For example, 
$K_1=S(7,3)$ and $K_2=S(7,5)$ are equivalent 2-bridge knots and their corresponding Conway's normal forms are $K_1=C[2,3]$ and $K_2=C[1,2,2]$, which is the upside -down of $C[2,3]$.
The rep-polynomials of $C[2,3]$ and $C[1,2,2]$ are  $u(u^6-u^4+2u^2-1)$ and $u(u^6+3u^4+2u^2-1)$, respectively. The Riley polynomials of $K_1$ and $K_2$ are $-(y^3-y^2+2y-1)$ and
 $-(y^3+3y^2+2y-1)$, respectively.

All the rep-polynomials of 2-bridge knots are expressed as combinations of Chebyshev polynomials $p_j$'s. (See Appendix.) So we can also get an explicit formula for Riley polynomial.  
\end{remark}
\begin{remark}\label{Riley-link}
For  a link $K$, we have 
\begin{equation}\label{link-Riley}
P_K(u)=\pm u^2 \mathcal{R}(u^2)=\pm u^2 W_{12}(u^2)=\pm W_{21}^*(u^2)
\end{equation}
since the degree of both $P_K(u)$ and $\pm u^2 \mathcal{R}(u^2)$ is $\alpha$. Hence there is a 1-1 correspondence between  the set of the squares of the non-zero roots of $\frac{1}{u^2}P_K(u)$ and the set of  the non-abelian representations of $K$. But we will see that $\frac{1}{u^2}P_K(u)$ might have $0$ as a root later.  

Note that  the relation $W^*\rho(a)(W^*)^{-1}=\rho(w^*)\rho(a)\rho(w^*)^{-1}=\rho(a)$
 with (\ref{parabolic-rep}) and (\ref{link-group}) implies $$\left[\begin{array}{c}
W^*_{11}(u^2)\\
W^*_{21}(u^2) 
 \end{array}\right]=T(W^*\rho(a)(W^*)^{-1})=T(\rho(a))=\left[\begin{array}{c}
1\\
0  
 \end{array}\right],$$
 and
 the relation $W\rho(b)W^{-1}=\rho(w)\rho(b)\rho(w)^{-1}=\rho(b)$
 with (\ref{parabolic-rep}) and (\ref{link-group})
 implies that
 \begin{equation*}
 \begin{split}
 ((W\rho(b)W^{-1}))^{-1})&=
 T(W \begin{pmatrix}
0 & -\frac{1}{u}\\
u & 0
 \end{pmatrix}
\rho(a) \begin{pmatrix}
0 & -\frac{1}{u}\\
u & 0
 \end{pmatrix}^{-1}W^{-1})\\
 &=
 T(W \begin{pmatrix}
0 & -\frac{1}{u}\\
u & 0
 \end{pmatrix}
\rho(a) (W\begin{pmatrix}
0 & -\frac{1}{u}\\
u & 0
 \end{pmatrix})^{-1})\\
 &=
 \left[\begin{array}{c}
uW_{12}(u^2)\\
uW_{22}(u^2)  
 \end{array}\right]=\left[\begin{array}{c}
0\\
u  
 \end{array}\right]=T(\rho(b)).
 \end{split}
 \end{equation*}
And we can observe 
$$\langle \begin{pmatrix}
W^*_{11}(u^2)\\
W^*_{21}(u^2)   
\end{pmatrix},\begin{pmatrix}
1\\
0  
 \end{pmatrix}\rangle=-W^*_{21}(u^2)=u^2W_{12}(u^2)=\langle \begin{pmatrix}
uW_{12}(u^2)\\
uW_{22}(u^2)  
 \end{pmatrix},\begin{pmatrix}
0\\
u  
 \end{pmatrix}\rangle.$$

\end{remark}
\begin{remark}
The last statement of Lemma \ref{ufactor} implies that the Riley polynomial $(-1)^{\frac{\alpha-1}{2}}\mathcal{R}(y)\in \mathbb Z[y]$ is 
a monic polynomial whose constant term is either $1$ or $-1$. 
(Note that (\ref{monic-riley}) is  equivalent to Equation 3.10 of \cite{Riley1}.)
But this does not hold for 2-bridge links. For example, the rep-polynomial of $C[2,1,2]$, the Whitehead link, is $u^4(u^4\pm 2u^2+2)$.
\end{remark}
\begin{example}
 We have seen in Example \ref{C[3]-knot} that  the rep-polynomial of $C[3]$ is $$\langle b, b_{1,3} \rangle=u(u^2-1).$$
Therefore the trefoil has only one non-abelian  parabolic representation because $1$ and $-1$ correspond to the same $\mathfrak{C}$-coloring  on $K$.  This representation has a generating meridian pair  $\begin{pmatrix}
1& 1\\
0 & 1 
 \end{pmatrix}$,  
$\begin{pmatrix}
1& 0\\
-1 & 1 
 \end{pmatrix}
$ up to conjugation.
\end{example}

\begin{example}\label{C[2,2,5]-knot}
 The rep-polynomial of a knot $K=C[2,2,5]$ is 
$$P_K(u)=u(u-1)(u+1)h(u)h(-u)$$
where $$h(u)=u^{12}-2u^{10}+u^9+4u^8-u^7-3u^6+3u^5+3u^4-u^3-u^2+2u+1$$
and all the non-zero roots of $P_K(u)$ give non-abelian representations. 

We can easily check that the $u_i$-sequence of $K=C[2,2,5]$ is
$$(u,-u^2,-u(u^4-u^2+1))=(u,-P_{C[2]}(u),-iP_{C[2,2]}(iu)).$$
 \end{example}
\begin{corollary}\label{u_i-poly}
For any $K=C[n_1,\cdots,n_k]$, $u_i\in\mathbb Z[u]$ is either an odd polynomial or an even polynomial depending on whether $K$ is a knot or a link respectively.
\end{corollary}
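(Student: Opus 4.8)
The plan is to reduce the parity question to the already-established structure of rep-polynomials. By Corollary \ref{u_i-polynomial}, for $i\ge 2$ the polynomial $u_i$ is either $P_{\frac{\beta_{i-1}}{\alpha_{i-1}}}(u)$ or $\pm i^{\alpha_{i-1}}P_{\frac{\beta_{i-1}}{\alpha_{i-1}}}(iu)$, where $[n_1,\cdots,n_{i-1}]=\frac{\beta_{i-1}}{\alpha_{i-1}}$ is the slope of the truncated diagram $C[n_1,\cdots,n_{i-1}]=S(\alpha_{i-1},\beta_{i-1})$, while $u_1=u$ is visibly odd. Thus it suffices to pin down the parity of a single rep-polynomial $P_{\frac{\beta}{\alpha}}(u)$ and to check that the substitution $u\mapsto iu$ together with multiplication by the scalar $\pm i^{\alpha}$ never mixes even and odd terms.

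For the parity of a rep-polynomial I would invoke the Riley identities of Remarks \ref{Riley} and \ref{Riley-link}, both of which are available at this point. When the underlying 2-bridge link $S(\alpha,\beta)$ is a knot, $\frac{1}{u}P_{\frac{\beta}{\alpha}}(u)=\pm\mathcal{R}(u^2)=\pm W_{11}(u^2)$, so $P_{\frac{\beta}{\alpha}}(u)=\pm u\,W_{11}(u^2)$ is $u$ times a polynomial in $u^2$, hence an odd polynomial. When it is a link, $P_{\frac{\beta}{\alpha}}(u)=\pm u^2 W_{12}(u^2)$ is a polynomial in $u^2$, hence an even polynomial. In either case the rep-polynomial is literally a monomial $u^{\epsilon}$ with $\epsilon\in\{1,2\}$ times a polynomial in $u^2$, so $u\mapsto iu$ sends the $u^2$-part to a $u^2$-part and multiplies $u^{\epsilon}$ by $i^{\epsilon}$, carrying odd polynomials to odd and even to even; multiplying afterward by the constant $\pm i^{\alpha}$ leaves the parity untouched. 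Therefore $u_i$ inherits exactly the parity of $P_{\frac{\beta_{i-1}}{\alpha_{i-1}}}(u)$, and since that parity is odd precisely when $\alpha_{i-1}$ is odd (equivalently $\alpha=\deg u_i$ by Proposition \ref{degree2} is odd), each $u_i$ is of definite parity: odd exactly when the 2-bridge link it encodes is a knot, even exactly when it is a link. Reading this in the statement's terms, $u_i$ is an odd polynomial for a knot and an even polynomial for a link, as claimed.

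The main obstacle I anticipate is the bookkeeping that guarantees the knot/link dichotomy is the parity dichotomy, and that neither the substitution $u\mapsto iu$ nor the scalar $\pm i^{\alpha}$ can introduce a term of the opposite parity; this is exactly resolved by the observation above that every rep-polynomial is $u^{\epsilon}$ times a function of $u^2$. As a self-contained alternative that avoids the Riley identities, I would first establish definite parity directly from the root-pairing symmetry recorded after Proposition \ref{u-polys}, namely that a $\mathfrak{C}$-coloring from $a,b$ yields one from $a,-b$ with negated determinant, so the roots of each $u_i$ are symmetric under $u\mapsto -u$ and hence $u_i(-u)=\pm u_i(u)$; one then fixes the sign by comparing leading terms, using $\deg u_i=\alpha_{i-1}$. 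The inductive step tracking the parities of $f_{i,j},g_{i,j},\tilde f_{i,j},\tilde g_{i,j}$ through the transfer matrices $X(\pm u_i)^{\pm1}$, comparing $X(-u)$ with $X(u)$, is then routine.
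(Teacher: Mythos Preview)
Your argument is correct and follows essentially the same route as the paper: the paper's proof is a single sentence citing exactly the three ingredients you use, namely Corollary~\ref{u_i-polynomial} together with the Riley identities (\ref{knot-Riley}) and (\ref{link-Riley}), to conclude that $u_i$ is odd or even according as $C[n_1,\cdots,n_{i-1}]$ is a knot or a link. You simply spell out in more detail the check that the substitution $u\mapsto iu$ and the scalar $\pm i^{\alpha_{i-1}}$ preserve parity, which the paper leaves implicit.
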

\begin{proof}
It follows from  (\ref{knot-Riley}), (\ref{link-Riley}), and Corollary \ref{u_i-polynomial} that
if $C[n_1,\cdots,n_{i-1}]$ is a knot, then $u_i$ is an odd polynomial in $u$ and if $C[n_1,\cdots,n_{i-1}]$ is a link, then $u_i$ is an even polynomial.
\end{proof}

\section{Trace field}
The trace field of a  representation $\rho$ is defined by
$\mathbb Q\langle tr\rho(\gamma)\,|\, \gamma\in G(K)\rangle $ \cite{MR}.
So it is obviously equal to  $\mathbb Q(y)=\mathbb Q(u^2)$ by (\ref{parabolic-rep}).  In this section, we show that 
any rep-polynomial of a 2-bridge knot always has a special decomposition, and  $u\in\mathbb  Q(u^2)$ as a unit. 
\begin{theorem}\label{u-poly-decomp}
Let $K$ be a 2-bridge knot. Then the rep-polynomial of $K$ is
$$P_K(u)=ug(u)\hat{g}(u)$$ for some $g(u) \in \mathbb Z[u]$ such that $\hat{g}(u)=(-1)^{\deg g} g(-u)$ and $\hat{g}(u)\neq g(u)$. Furtheremore there is a 1-1 correspondence between the set of roots of $g(u)$ and the set of non-abelian parabolic representations of $K$. 
\end{theorem}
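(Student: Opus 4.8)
The plan is to reduce the whole decomposition to a single arithmetic fact: that $u$ lies in the trace field $\mathbb{Q}(u^2)$. Concretely, I would first show that for every root $r$ of $\frac{1}{u}P_K(u)$ one has $r\in\mathbb{Q}(r^2)$. By (\ref{knot-Riley}) we have $\frac{1}{u}P_K(u)=\pm\mathcal{R}(u^2)=\pm W_{11}(u^2)$, so $r^2$ is a root of the Riley polynomial $\mathcal{R}=W_{11}$. Now I invoke the computation already recorded in Remark~\ref{Riley}: applying $T$ to the group relation $W\rho(a)W^{-1}=\rho(b)$ yields a vector whose class in $\mathfrak{C}$ equals $T(\rho(b))=[0,r]$, where $\rho$ is built from $y=r^2$. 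Reading off coordinates gives $(W_{11}(r^2),W_{21}(r^2))=\pm(0,r)$, hence $W_{21}(r^2)=\pm r$. Since $W_{21}\in\mathbb{Z}[y]$, this gives $r=\pm W_{21}(r^2)\in\mathbb{Q}(r^2)$, i.e. $\mathbb{Q}(r)=\mathbb{Q}(r^2)$.

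Next I would convert this into the splitting, working one irreducible factor of $\mathcal{R}$ at a time. First note that $\mathcal{R}$ has no repeated roots (Section 2.2) and $\mathcal{R}(0)=\pm1\neq0$ by Lemma~\ref{ufactor}, so $\frac{1}{u}P_K(u)=\pm\mathcal{R}(u^2)$ is separable and its roots are nonzero and come in pairs $\{r,-r\}$. Factor $\mathcal{R}(y)=\pm\prod_{l}\mathcal{R}_l(y)$ into monic irreducibles over $\mathbb{Q}$, fix $l$ with $d=\deg\mathcal{R}_l$, and choose a root $y_0$ with square root $r_0$. Since $r_0\in\mathbb{Q}(r_0^2)=\mathbb{Q}(y_0)$ and $y_0=r_0^2\in\mathbb{Q}(r_0)$, we get $[\mathbb{Q}(r_0):\mathbb{Q}]=d$; thus the minimal polynomial $g_l$ of $r_0$ has degree $d$ and divides $\mathcal{R}_l(u^2)$, which has degree $2d$. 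Because squaring is a bijection from the conjugates of $r_0$ onto the conjugates of $y_0$, the roots of $g_l$ hit exactly one of each pair $\{\pm\sqrt{y_j}\}$, so the complementary factor of $\mathcal{R}_l(u^2)$ carries precisely the negated roots of $g_l$ and equals $(-1)^{d}g_l(-u)=\hat{g}_l$. Hence $\mathcal{R}_l(u^2)=g_l\hat{g}_l$.

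To assemble, set $g=\prod_l g_l$, which is monic. Using the multiplicativity $\widehat{gh}=\hat{g}\hat{h}$ (immediate from the definition of $\hat{\,}$), I obtain $\mathcal{R}(u^2)=\pm\prod_l g_l\hat{g}_l=\pm g\hat{g}$, and comparing with $\frac{1}{u}P_K(u)=\pm\mathcal{R}(u^2)$, both sides being monic, gives $\frac{1}{u}P_K(u)=g\hat{g}$ and therefore $P_K(u)=u\,g(u)\hat{g}(u)$. Since $\frac{1}{u}P_K(u)$ is monic with integer coefficients (Lemma~\ref{ufactor}) and $g$ is a monic rational factor, Gauss's lemma gives $g,\hat{g}\in\mathbb{Z}[u]$. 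The non-symmetry $\hat{g}\neq g$ holds because the roots of $g$ form a transversal of the pairs $\{r,-r\}$ and are thus disjoint from the roots of $\hat{g}$; as $\deg g=\frac{\alpha-1}{2}\geq1$ for a nontrivial knot, $g$ has a root and cannot coincide with $\hat{g}$. Finally, the claimed bijection follows from Theorem~\ref{NA-rep}: the non-abelian representations correspond to squares of roots of $\frac{1}{u}P_K(u)$, and squaring carries the roots of $g$ bijectively onto the roots of $\mathcal{R}$.

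The only genuine obstacle is the arithmetic input $u\in\mathbb{Q}(u^2)$, equivalently $W_{21}(r^2)=\pm r$; everything downstream is routine field theory together with Gauss's lemma. Fortunately this input is already isolated in Remark~\ref{Riley} as the second coordinate of the identity $T(W\rho(a)W^{-1})=T(\rho(b))$, so the essential step of the proof is simply to recognize that this computation says precisely that $u$ is an integer-coefficient polynomial in $u^2$ evaluated at the root, and then to run the standard ``$\mathcal{R}(u^2)$ splits as $\pm g(u)g(-u)$'' argument factor by factor.
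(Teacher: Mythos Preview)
Your argument is correct but follows a genuinely different route from the paper's. The paper works diagrammatically: it passes to a reduced even expansion $C[2n_1,\ldots,2n_{2m}]$, observes that the last block determinant $u_{2m}$ is an even polynomial $f(u^2)$, and uses the upside-down diagram together with Proposition~\ref{rotation} to see that the roots of $P'_K$ are $\pm f(r_i^2)$. A symmetric-function argument then shows $g'(u)=\prod_i(u-f(r_i^2))\in\mathbb{Z}[u]$, and applying the same construction once more (via the polynomial $h$ coming from the upside-down even expansion) produces $g\in\mathbb{Z}[u]$ with $P_K=u\,g\,\hat g$.

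Your route is purely arithmetic: you extract from Remark~\ref{Riley} the second coordinate identity $W_{21}(r^2)=\pm r$, which immediately gives $r\in\mathbb{Q}(r^2)$, and then split each irreducible factor $\mathcal{R}_l(u^2)$ as $g_l\hat g_l$ by elementary field theory, finishing with Gauss's lemma. In effect you prove (the field-theoretic half of) Proposition~\ref{units} \emph{first} and derive Theorem~\ref{u-poly-decomp} from it, whereas the paper does the reverse, deducing Proposition~\ref{units} from Theorem~\ref{u-poly-decomp}. Your approach is shorter and avoids the even-expansion and upside-down machinery entirely; on the other hand, the paper's construction yields $g$ explicitly in terms of the $u_i$-sequence data (the polynomials $f$ and $h$), which ties the decomposition to the diagrammatic framework used elsewhere (e.g.\ Corollary~\ref{trace-field} and Section~7). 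Both proofs ultimately rest on the same ingredients from Section~\ref{BPrep-2brKnots} (Lemma~\ref{ufactor}, Theorem~\ref{NA-rep}, and the separability of $\mathcal{R}$), but your key lemma is the single line $W_{21}(r^2)=\pm r$, while the paper's is the even-polynomial structure of $u_{2m}$ on an even expansion.
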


\begin{proof}
Every 2-bridge knot $K$ can be expressed as  $$K=C[2n_1,2n_2,\cdots,2n_{2m}],  n_i \in \mathbb Z - \{0\},$$ so called a reduced even expansion of $K$. (See \cite{Cromwell} or \cite{GHS}.)
In such a diagram for $K$, 
\begin{enumerate}
\item [\rm (i)] $u_{2k}=\pm P_{C[2n_1,2n_2,\cdots,2n_{2k-1}]}(u)$ is an even polynomial in $u$,
\item [\rm (ii)] $u_{2k+1}=\pm P_{C[2n_1,2n_2,\cdots,2n_{2k}]}(u)$ is an odd polynomial in $u$,
\end{enumerate}
as one can easily see that $C[2n_1,2n_2,\cdots,2n_{2k}]$ is a knot and $C[2n_1,2b_2,\cdots,2n_{2k-1}]$ is a link for any $k$ such that $2k+1<2m$.
For example, $u_2=-u^2p_{n_1}(-2-u^2)$. (See Appendix \ref{EvenExpansion} for the details.)

So $u_{2m}=f(u^2)$ for some $f \in \mathbb Z[u]$. Now if we let $\pm r_1, \cdots, \pm r_k$ be the non-zero roots of the rep-polynomial $P_K(u)$, then the
non-zero 
roots of the rep-polynomial $P'_K(u)$ which comes from the upside-down diagram, or equivalently the rep-polynomial coming from the arc coloring which
 starts with the two vectors on the bottom, are $$\pm f(r_1^2), \cdots,\pm f(r_k^2).$$  By  the fact that all the roots of the Riley polynomial are distinct \cite{Riley1}, $r_i\neq r_j$ for $i\neq j$, and thus 
$$P_K(u)= u(u+r_1)(u-r_1) \cdots(u+r_k)(u-r_k).$$  
 and   $$ P'_K(u)= u(u+f(r_1^2))(u-f(r_1^2)) \cdots(u+f(r_k^2))(u-f(r_k^2)).$$

Since $$P_K(u)= u(u+r_1)(u-r_1) \cdots(u+r_k)(u-r_k)= u(u^2-r_1^2)\cdots(u^2-r_k^2)\in \mathbb Z[u],$$
$$(u-r_1^2)\cdots(u-r_k^2)\in \mathbb Z[u].$$  
As elementary symmetric polynomials of $f(a_1), f(a_2), \cdots,f(a_k)$ can be expressed as those of $ a_1,a_2,\cdots,a_k$, 
$$g'(u):=(u-f(r_1^2)) \cdots(u-f(r_k^2))\in \mathbb Z[u],$$  
$$ P'_K(u)=ug'(u)\hat{g'}(u).$$

Similarly, since $P_K=(P')'_K$ there is a polynomial $h$ with integer coefficients such that $$ P_K(u)= u(u+h(f(r_1^2)))(u-h(f(r_1^2))) \cdots(u+h(f(r_k^2)))(u-h(f(r_k^2)))$$  and $g(u):=(u-h(f(r_1^2))) \cdots (u-h(f(r_k^2))) \in \mathbb Z[u]$,  and $$P_K(u)=ug(u)\hat{g}(u)$$ as desired. The property $\hat{g}(u)\neq g(u)$ is obvious, because if it is not the case  then $P_K(u)=ug(u)^2$ and this contradicts that all the roots of the Riley polynomial are distinct. 

Since two $\mathfrak{C}$-colorings  on $K$ with $u=\langle a_{1,0},b_{1,0} \rangle$ and $-u=\langle a_{1,0},b_{1,0} \rangle$ correspond to the same representation of $K$, the last statement follows from Theorem \ref{NA-rep}.
\end{proof}

\begin{example}\label{2-3}
 We have seen in Remark \ref{Riley} that  the rep-polynomial of $C[2,3]$ and $C[1,2,2]$ are  $u(u^6-u^4+2u^2-1)$ and $u(u^6+3u^4+2u^2-1)$, respectively. We can check that
$$u^6-u^4+2u^2-1=(u^3+u^2-1)(u^3-u^2+1)$$
and $$u^6+3u^4+2u^2-1=(u^3+u^2+2u+1)(u^3-u^2+2u-1).$$
\end{example}
\begin{remark}
There exist  $2^{\frac{\alpha-1}{2}}$ number of polynomials $g(u)\in \mathbb C[u]$ such that 
$P_K(u)=ug(u)\hat{g}(u)$ for a 2-bridge knot $K=S(\alpha,\beta)$. But Theorem \ref{u-poly-decomp} implies that we can find one among them which has integer coefficients.    
\end{remark}
\begin{remark}
Theorem \ref{u-poly-decomp} does not hold for the link case.  For example, the rep-polynomial of  $C[2,1,2]$, the Whitehead link,  is
$$P_K(u)=u^4(u^4\pm 2u^2+2)=u^4(u^2+1\pm i)(u^2-1\pm i).$$ 
The  sign in the above equation depends on the orientations of the two components of $K$. If we change the orientation of one of two  components, then $u$ is changed to $\pm iu$.
As this example shows, Riley polynomial $W_{12}(u^2)=\frac{1}{u^2}P_K(u)$ might have $0$ as a root in link case.

Some rep-polynomials of links are decomposed into two polynomials with integer coefficients, but the two factors do not usually have the same property  as $g(u)$ in Theorem \ref {u-poly-decomp}.
The rep-polynomials of a link  
$K=C[2,1,4]$ are $P_K(u)$ or $ P_K(iu)$, where 
$$P_K(u)=u^2(u^6-3u^4+4u^2-1)(u^6-u^4+1).$$
\end{remark}
When $K$ is a knot,  $\frac{1}{u}P_K(u)\in \mathbb Z[u]$ is a monic polynomial whose constant term is either $1$ or $-1$ by Lemma \ref{ufactor}. So it is easy to show that $r^2$ is a unit in $\mathbb  Q(r^2)$ for each nonzero root $r$ of $P_K(u)$,
and  we can show using Theorem \ref{u-poly-decomp} that  $r\in\mathbb  Q(r^2)$ as follows.

\begin{proposition}\label{units} For any 2-bridge knot $K$, each nonzero root $r$  of $P_K(u)$ belongs to the trace field $\mathbb Q(r^2)$, as a unit, of the  parabolic representation corresponding to $r$.
\end{proposition}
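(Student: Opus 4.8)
The plan is to separate the statement into its two assertions — that $r$ is a unit, and that $r$ lies in $\mathbb{Q}(r^2)$ — and to observe that the unit assertion is essentially immediate from the remark preceding the proposition. By Lemma~\ref{ufactor}, $\frac{1}{u}P_K(u)$ is a monic integer polynomial in $u^2$ with constant term $\pm 1$; writing it as $Q(u^2)$ with $Q\in\mathbb{Z}[y]$ monic and $Q(0)=\pm 1$, the relation $Q(r^2)=0$ exhibits $(r^2)^{-1}$ as an element of $\mathbb{Z}[r^2]$, so $r^2$ is a unit in the ring of integers of $\mathbb{Q}(r^2)$. Since $P_K$ is monic, $r$ is an algebraic integer; once we know $r\in\mathbb{Q}(r^2)$, the identity $r^{-1}=r\cdot(r^2)^{-1}$ shows $r^{-1}$ is again an algebraic integer lying in $\mathbb{Q}(r^2)$, so $r$ is a unit. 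Thus everything reduces to proving $r\in\mathbb{Q}(r^2)$.

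For that the key input is the splitting $P_K(u)=u\,g(u)\hat{g}(u)$ of Theorem~\ref{u-poly-decomp}, with $g\in\mathbb{Z}[u]$, $\hat{g}(u)=(-1)^{\deg g}g(-u)$, and $g\neq\hat{g}$. I would argue by contradiction: suppose $r\notin\mathbb{Q}(r^2)$. Then $[\mathbb{Q}(r):\mathbb{Q}(r^2)]=2$, the minimal polynomial of $r$ over $\mathbb{Q}(r^2)$ is $x^2-r^2$, and hence there is an embedding $\sigma$ of $\mathbb{Q}(r)$ into $\overline{\mathbb{Q}}$ fixing $\mathbb{Q}(r^2)$ pointwise and sending $r\mapsto -r$. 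The nonzero root $r$ is a root of $g$ or of $\hat{g}$; say it is a root of $g$ (the other case is symmetric, since $\hat{\hat{g}}=g$). Applying $\sigma$ to the relation $g(r)=0$ and using that $g$ has integer coefficients fixed by $\sigma$ gives $g(-r)=\sigma(g(r))=0$.

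The contradiction then comes from comparing this with the structure of the factorization. Since $\hat{g}(u)=(-1)^{\deg g}g(-u)$, the fact that $r$ is a root of $g$ forces $-r$ to be a root of $\hat{g}$. On the other hand, $\frac{1}{u}P_K(u)=g(u)\hat{g}(u)$ equals $\pm\mathcal{R}(u^2)$, and the Riley polynomial has no repeated roots, so $g$ and $\hat{g}$ have disjoint root sets. Hence $-r$, being a root of $\hat{g}$, cannot be a root of $g$, contradicting $g(-r)=0$. Therefore $r\in\mathbb{Q}(r^2)$, and the unit assertion follows as above.

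The step I expect to require the most care is the disjointness of the roots of $g$ and $\hat{g}$: this is exactly what prevents $r$ and $-r$ from lying in the same factor, and it rests on the simplicity of the roots of the Riley polynomial (equivalently, of $\frac{1}{u}P_K(u)$), together with $r\neq 0$ so that $r\neq -r$. Everything else — the existence of the order-two conjugation $r\mapsto -r$ over $\mathbb{Q}(r^2)$ and the bookkeeping showing $r$ and $r^{-1}$ are both algebraic integers in the trace field — is routine Galois theory and algebraic integer manipulation.
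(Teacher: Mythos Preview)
Your proof is correct and follows essentially the same approach as the paper: both deduce the unit assertion from Lemma~\ref{ufactor} and the containment $r\in\mathbb{Q}(r^2)$ from the splitting $P_K(u)=u\,g(u)\hat g(u)$ of Theorem~\ref{u-poly-decomp}. The only difference is packaging: the paper writes $g(u)=A(u^2)+uB(u^2)$ and solves $r=-A(r^2)/B(r^2)$ directly, while you run the equivalent Galois-theoretic contradiction; your version has the mild advantage of making explicit the appeal to simplicity of the Riley polynomial roots (which the paper needs implicitly to guarantee $B(r^2)\neq 0$).
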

\begin{proof}
Let $K=C[n_1,\cdots,n_k]$.
By Lemma \ref{ufactor},  
 there is $c_{2k} \in \mathbb Z, k=1,2,\cdots,\frac{\alpha-3}{2}$ such that 
$$P_K(u)=u(u^{\alpha-1}+c_{\alpha-3}u^{\alpha-3}+\cdots+c_2u^2\pm1).$$
Therefore each nonzero root $r$ of $P_K(u)$ satisfies the following equation:
\begin{equation}\label{unit}
r^2(r^{\alpha-3}+c_{\alpha-3}r^{\alpha-5}+\cdots+c_2)=\pm1,
\end{equation}
which implies that 
$r$ is a unit in $\mathbb  Q(r^2)$ if $r\in\mathbb  Q(r^2)$. 

 By Theorem \ref{u-poly-decomp}, $P_K(u)=\pm ug(u)g(-u)$ and thus there are $A(u)$ and $B(u)$ in $\mathbb Z[u]$ such that $g(u)=A(u^2)+uB(u^2)$.  Note that  $\hat{g}(u)\neq g(u)$ and thus $B(u^2)\neq 0$.
So if $g(r)=0$ then $r=-\frac{A(r^2)}{B(r^2)} \in \mathbb  Q(r^2)$, which implies that $\mathbb  Q(r)=\mathbb  Q(r^2)$. 
\end{proof}
\begin{remark}
Proposition \ref{units} does not hold for 2-bridge links. For example, the rep-polynomial of $C[2,1,2]$ is $u^4(u^4\pm 2u^2+2)$, and  $\mathbb  Q(r)$ is not equal to $\mathbb  Q(r^2)$ for any $r$ such that $r^4\pm2r^2+2=0$. 
\end{remark}
\begin{corollary} \label{trace-field}
Let $C[n_1,n_2,\cdots, n_k]$ and $r$ be a non-zero root of the rep-polynomial $P_K(u)$ of $K$. Then
 $\mathbb{Q}(r)=\mathbb{Q}(u_k(r))$.
\end{corollary}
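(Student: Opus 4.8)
The equality to be proved is between two subfields of $\mathbb{C}$, and one inclusion is immediate. Since the $u_i$-sequence of $K$ consists of integer-coefficient polynomials by its definition together with Lemma \ref{degree}, the polynomial $u_k=u_k(u)$ lies in $\mathbb{Z}[u]$, so evaluating at the root gives $u_k(r)\in\mathbb{Z}[r]\subseteq\mathbb{Q}(r)$ and hence $\mathbb{Q}(u_k(r))\subseteq\mathbb{Q}(r)$. The entire content of the corollary is therefore the reverse inclusion $r\in\mathbb{Q}(u_k(r))$; that is, I must exhibit $r$ as a value of an integer-coefficient polynomial in $u_k(r)$.

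The plan is to read off this expression from the upside-down symmetry of the Conway diagram, which has already been recorded in Proposition \ref{rotation}. Let $K'$ be the upside-down of $K=C[n_1,\cdots,n_k]$ (so $m=k$ there), and write $(u'_1,\cdots,u'_k)$ for its $u_i$-sequence, noting that each $u'_j\in\mathbb{Z}[u]$. Put $s:=u_k(r)$. Applying Proposition \ref{rotation} to the non-zero root $r$ of $P_K(u)$, the $u_i(s)$-sequence of $K'$ is, up to sign, the reversed sequence $(u_k(r),u_{k-1}(r),\cdots,u_2(r),r)$. In particular its initial term is $s$, confirming the normalization, and its terminal term $u'_k(s)$ equals $\pm r$; equivalently $u'_k(u_k(r))^2=r^2$.

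From $r=\pm u'_k(u_k(r))$ the conclusion is then immediate: $u'_k$ has integer coefficients, so $u'_k(u_k(r))\in\mathbb{Z}[u_k(r)]\subseteq\mathbb{Q}(u_k(r))$, and $\mathbb{Q}(u_k(r))$ is closed under negation, whence $r\in\mathbb{Q}(u_k(r))$. Together with the trivial inclusion this yields $\mathbb{Q}(r)=\mathbb{Q}(u_k(r))$. I expect the main (and only) delicate point to be the bookkeeping of the ``up to sign'' phrase in Proposition \ref{rotation}: one must verify that the terminal entry really gives $u'_k(u_k(r))=\pm r$ rather than a relation that fails to pin down $r$ as a polynomial in $s$. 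This ambiguity is harmless here, since the sign is absorbed by the field being closed under $u\mapsto -u$, and it even covers the degenerate possibility $u_k(r)=0$, for then $r=\pm u'_k(0)\in\mathbb{Z}\subseteq\mathbb{Q}$. Because Proposition \ref{rotation} is stated for a general Conway diagram, the argument applies uniformly to both the knot and the link case.
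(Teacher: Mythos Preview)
Your proof is correct. The inclusion $\mathbb{Q}(u_k(r))\subseteq\mathbb{Q}(r)$ is trivial, and for the other you invoke Proposition~\ref{rotation} (equivalently, item (i) of the unnamed proposition preceding it, which already records $u'_k(u_k(r))^2=r^2$) to obtain $r=\pm u'_k(u_k(r))\in\mathbb{Z}[u_k(r)]$. The ``up to sign'' and the degenerate case $u_k(r)=0$ are handled exactly as you say.

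This is, however, a different route from the one the paper intends. The corollary is placed immediately after Proposition~\ref{units} and carries the label ``trace-field'': the implicit argument is that, for a knot, Proposition~\ref{units} gives $\mathbb{Q}(r)=\mathbb{Q}(r^2)$; since $u_k(r)$ is a nonzero root of $P_{K'}(u)$ for the upside-down $K'$, the same proposition applied to $K'$ gives $\mathbb{Q}(u_k(r))=\mathbb{Q}(u_k(r)^2)$; and $\mathbb{Q}(r^2)=\mathbb{Q}(u_k(r)^2)$ because both are the trace field of one and the same parabolic representation (the coloring is identical, only read from the other end). Your argument bypasses Proposition~\ref{units} and hence the splitting Theorem~\ref{u-poly-decomp} on which it rests; it is more elementary and, as you note, works verbatim for links, where Proposition~\ref{units} fails. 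What the paper's route buys in return is the identification of $\mathbb{Q}(r)=\mathbb{Q}(u_k(r))$ with the trace field $\mathbb{Q}(r^2)$ itself, which is the point of the corollary's name.
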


\section{Complex volume and Cusp shape}
In this section, we will see that  the complex volume and the cusp shape  of a parabolic representation of  an arbitrary 2-bridge knot can be easily computed using the quandle coloring. 

Let $C[n_1,\cdots,n_k]$ be any Conway diagram of a 2-bridge knot $K$ with an arc coloring $\{a_{i,j}, b_{i,j}\}$
which corresponds to a parabolic representation $\rho : G(K) \rightarrow SL(2,\bc)$ and $n=n_1+\cdots+n_k$. Then we have $n+2$ regions, $r_1,\cdots,r_{n+2}$ and we can define a \emph{region coloring} 
$\beta : \{r_1,\cdots,r_{n+2} \} \rightarrow \mathbb C^2$ on the given Conway diagram, satisfying the condition illustrated in Figure~\ref{fig:region} around each arc \cite{IK}. 
\begin{figure}[hbt]
	\begin{center}
		\scalebox{0.4}{\includegraphics{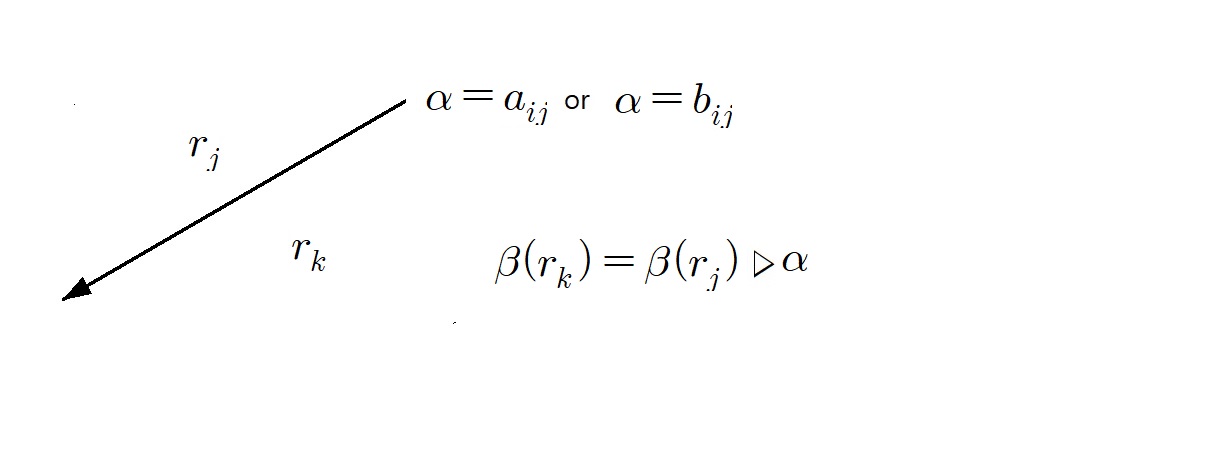}}
	\end{center}
	\caption{Rule for a region coloring}
	\label{fig:region}
\end{figure}
Now we choose any non-zero generic vector $p\in \mathbb C^2$  and assign a complex number $w_j=\langle p, \beta(r_j) \rangle$ to each region $r_j$, which is called \emph{a region variable}, 
and define a function $C(w_1,\cdots,w_{n+2})$ by the sum of
\begin{equation*}
\left\{
\begin{array}{ll}
\dfrac{w_a w_c -w_b w_d}{(w_a-w_d)(w_c-w_b)} -1 & \textrm{for Figure \ref{fig:crossings} (left)} \\[15pt]
\dfrac{w_a w_c -w_b w_d}{(w_a-w_d)(w_c-w_b)} +1 & \textrm{for Figure \ref{fig:crossings} (right)} 
\end{array}			
\right.
\end{equation*} over all crossings \cite{KKY1}.
\begin{figure}[hbt]
	\begin{center}
		\scalebox{0.4}{\includegraphics{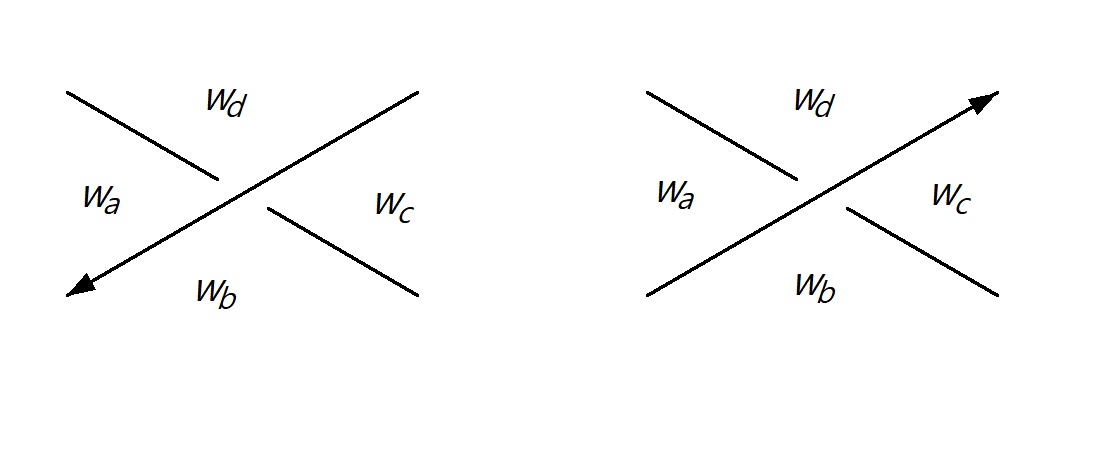}}
	\end{center}
	\caption{Region variables at a crossing}
	\label{fig:crossings}
\end{figure}	
The \emph{potential function} $W(w_1,\cdots,w_{n+2})$ which is defined by  Cho and Murakami  in \cite{Cho2, Cho-M},  is the sum of
	\begin{equation*}
\left\{
\begin{array}{ll}
-\textrm{Li}_2 \left(\dfrac{w_d}{w_a} \right) -\textrm{Li}_2 \left(\dfrac{w_d}{w_c} \right) + \textrm{Li}_2 \left(\dfrac{w_a}{w_b} \right) + \textrm{Li}_2 \left(\dfrac{w_c}{w_b}\right) &\\[10pt]
\mkern 100mu+ \textrm{Li}_2 \left(\dfrac{w_b w_d}{w_a w_c} \right)  - \dfrac{\pi^2}{6} + \textrm{log} \dfrac{w_a}{w_b} \ \textrm{log} \dfrac{w_c}{w_b}  & \mkern 10mu \textrm{for Figure \ref{fig:crossings} (left)} \\[15pt]

\textrm{Li}_2 \left(\dfrac{w_a}{w_b} \right) +\textrm{Li}_2 \left(\dfrac{w_a}{w_d} \right) - \textrm{Li}_2 \left(\dfrac{w_b}{w_c} \right) - \textrm{Li}_2 \left(\dfrac{w_d}{w_c}\right) &\\[10pt]
\mkern 100mu - \textrm{Li}_2 \left(\dfrac{w_a w_c}{w_b w_d} \right)  + \dfrac{\pi^2}{6} - \textrm{log} \dfrac{w_b}{w_c} \ \textrm{log} \dfrac{w_d}{w_c}  & \mkern 10mu\textrm{for Figure \ref{fig:crossings} (right)} \\[15pt]
\end{array}			
\right.
\end{equation*} over all crossings. 

Using the above two functions defined on the region variables induced from the arc coloring vectors, the cusp shape  and the complex volume $\textrm{Vol}_\mathbb C(\rho) \in \mathbb C/i\pi^2 \mathbb Z$
can be easily computed  as follows  for any parabolic representation $\rho$ of $K$ \cite{Cho2, Cho-M,KKY2}.
 (See also \cite{Zickert} and \cite{GTZ} for the earlier relevant works for complex volume.)
\begin{theorem}\label{cusp-volume} Let $\{a_{i,j}, b_{i,j}\}$ be an arc coloring of a $2$-bridge knot $K=C[n_1,\cdots,n_k]$ corresponding to a  parabolic representation $\rho: G(K) \rightarrow SL(2,\bc)$. Let $\{w_i\}_{i=1,\cdots,n+2}$ be any region variable induced from the arc coloring, where $n=\sum_{j=1}^kn_j$.
Then
\begin{enumerate}
\item [\rm (i)]  the cusp shape of $\rho$ is given by
	$C(w_1,\cdots,w_{n+2})$,
\item [\rm (ii)] the complex volueme of $\rho$ is given by
	$$i \,\textrm{Vol}_\mathbb C (\rho) \equiv W_0( w_1,\cdots,w_{n+2}) \quad (\emph{mod } \pi^2 \mathbb Z).$$ 
\end{enumerate}
Here the function $W_0$ is defined as follows.
	$$W_0(w_1,\cdots,w_{n+2}):=W(w_1,\cdots,w_{n+2}) - \sum_{k=1}^{n+2} \left( w_k \dfrac{\partial W}{\partial w_k} \right) \textrm{Log}\, w_k$$
\end{theorem}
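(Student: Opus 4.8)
The plan is to recognize Theorem~\ref{cusp-volume} as an application of the state-sum formulas of Cho--Murakami \cite{Cho2, Cho-M} for the complex volume and of Kim--Kim--Yoon \cite{KKY2} for the cusp shape, so that the real content is to verify that the region variables manufactured from our symplectic-quandle arc coloring are exactly the region variables entering those formulas. First I would check that the region coloring $\beta\colon\{r_1,\dots,r_{n+2}\}\to\bc^2$ of Figure~\ref{fig:region} is well defined. The rule there prescribes that the two region colors on the two sides of an arc differ by (a scalar multiple of) the coloring vector assigned to that arc, and the compatibility of these prescriptions around a single crossing is precisely the quandle relation $\gamma=\alpha\rhd\beta$ satisfied by our $\mathfrak{C}$-coloring (Section~\ref{BPrep-2brKnots}, \cite{IK}). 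Hence a global assignment $\beta$ exists and is unique up to the overall translation coming from the choice of an initial region; this ambiguity will be harmless below.

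Next, fixing a generic $p\in\bc^2$ and setting $w_j=\langle p,\beta(r_j)\rangle$, I would show that the quantities that actually appear in $C$ and in the potential $W$ are independent of $p$ and equal the shape parameters of $\rho$. For two regions separated by an arc with coloring vector $v$ one has $w-w'=\pm\langle p,v\rangle$, so the difference of adjacent region variables is the ``$w$-variable'' of the intervening arc. The cross-ratio
$$z=\frac{w_a w_c-w_b w_d}{(w_a-w_d)(w_c-w_b)}$$
occurring at each crossing is the cross-ratio of the four region colors around the crossing, and applying the Pl\"ucker relation $\langle x,y\rangle\langle s,t\rangle-\langle x,s\rangle\langle y,t\rangle+\langle x,t\rangle\langle y,s\rangle=0$ for the symplectic form on $\bc^2$ collapses every factor $\langle p,\cdot\rangle$ and rewrites $z$ as a ratio of determinants $\langle\,,\rangle$ of the arc coloring vectors, i.e. a ratio of $u$-variables. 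Thus $z$ is independent of $p$ and coincides with the shape parameter that Cho--Murakami attach to the corresponding ideal tetrahedron of the octahedral decomposition associated to $\rho$.

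With this identification in hand the theorem follows. Because the coloring comes from an honest boundary-parabolic representation, these shape parameters satisfy the gluing relations, equivalently the exponentiated saddle-point equations $\exp\!\bigl(w_k\,\partial W/\partial w_k\bigr)=1$; Cho--Murakami then give $i\,\mathrm{Vol}_{\bc}(\rho)\equiv W_0(w_1,\dots,w_{n+2})\pmod{\pi^2\bz}$, the correction term $\sum_k w_k(\partial W/\partial w_k)\,\mathrm{Log}\,w_k$ being exactly what renders $W_0$ independent of the dilogarithm and logarithm branch choices (and of $p$). The same region variables feed into the Kim--Kim--Yoon formula to produce the cusp shape $C(w_1,\dots,w_{n+2})$, which is $p$-independent by the cross-ratio computation above. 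The main obstacle is the middle step: carefully matching our signs, orientations, and over/under conventions at each crossing to those of \cite{Cho2, Cho-M, IK}, so that the Pl\"ucker reduction indeed yields their shape parameter rather than its inverse; once this dictionary is fixed, the $p$-independence together with the cited state-sum theorems closes the argument.
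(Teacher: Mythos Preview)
Your proposal is correct and in the same spirit as the paper: the paper does not give its own proof of this theorem at all, but simply states it as a direct consequence of the cited works \cite{Cho2, Cho-M, KKY2} (with \cite{Zickert, GTZ} as background), so there is nothing to compare beyond the citations. Your added verification that the region coloring $\beta$ is well defined and that the cross-ratios at each crossing are $p$-independent via the Pl\"ucker relation is a reasonable elaboration that the paper omits; just be aware that the exact matching of sign and orientation conventions you flag as ``the main obstacle'' is precisely the bookkeeping handled in \cite{IK, Cho2, KKY2}, so in practice one defers to those references rather than redoing it here.
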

\begin{example}
Let $K_1=C[2,3,0,3,2,-2,2,3]$ and $K_2=C[2,3]$. Then by Theorem 6.1, Proposition 6.2 and Remark 6.3 of \cite{ORS}, there is a proper branched fold map $f : (\mathbb S^3,K_1) \rightarrow (\mathbb S^3,K_2)$ which respects the bridge structures, which induces an epimorphism $f_*: G(K_1) \rightarrow G(K_2)$, and the degree of $f_*$ is equal to $3$.

 We have seen in Example \ref{2-3} that  the rep-polynomial $P_{\frac{3}{7}}(u)$ of $K_2$ is  as follows:
$$P_{\frac{3}{7}}(u)=u(u^6-u^4+2u^2-1)=u(u^3+u^2-1)(u^3-u^2+1).$$
 The rep-polynomial $P_{\frac{101}{217}}(u)$ of $K_1$ is  expressed as
 $$P_{\frac{101}{217}}(u)=u(u^3+u^2-1)(u^3-u^2+1)h(u)\hat{h}(u),$$
 where $h(u)$ is an irreducible monic polynomial of degree $105$: $$h(u)=u^{105}-2u^{104}-5u^{103}+14u^{102}+\cdots-4u^2+1.$$
 We can observe that $P_{\frac{3}{7}}(u)\,|\,P_{\frac{101}{217}}(u)$. We will see in the next section that it is generally true that if $K_1\geq K_2$ then  $P_{K_2}(u)$  divides either  $P_{K_1}(u)$ or $P'_{K_1}(u)$ (see Theorem \ref{KM}).

Now we compute the cusp shapes and the complex volumes of the  parabolic representations of $G(K_1)$ and $G(K_2)$ which correspond to the non-zero roots of $P_{\frac{3}{7}}(u)$ using Theorem \ref{cusp-volume}. By the calculation using Mathematica, we can  check  that  
 the cusp shapes and the complex volumes of $K_1$ are exactly $3$ times those of $K_2$ as expected, respectively. (See Table \ref{cusp shape} and Table \ref{cx volume}.)
 \begin{table}[hbt]
	\begin{center}
\begin{tabular}{|c|c|c|}	\hline
	non-zero roots of $P_{\frac{3}{7}}(u)$&$K_1=C[2,3,0,3,2,-2,2,3]$ & $K_2=C[2,3]$\\ \hline
	$u= 0.75487766$ &$ -27.05853199$&$ -9.01951066$\\ \hline
	$u=0.87743883 + 0.74486176i$&$ -7.47073400+8.93834119 i$&$ -2.49024466+2.97944706 i$\\ \hline
		$u= 0.87743883 - 0.74486176i$&$ -7.47073400-8.93834119 i$&$ -2.49024466-2.97944706 i $\\ \hline
	\end{tabular}
\end{center}\caption{cusp shapes of $K_1$ and $K_2$}\label{cusp shape}
\end{table}

 \begin{table}[hbt]
	\begin{center}
\begin{tabular}{|c|c|c|}	\hline
	non-zero roots of $P_{\frac{3}{7}}(u)$&$K_1=C[2,3,0,3,2,-2,2,3]$ & $K_2=C[2,3]$\\ \hline
	$u= 0.75487766$ &$0+3.34036365i$&$ 0+1.11345455i$\\ \hline
	$u=0.87743883 + 0.74486176i$&$-8.48436626+30.40603247i $&$-2.82812208-3.02412837i $\\ \hline
		$u= 0.87743883 - 0.74486176i$& $8.48436626-9.07238513i$ & $2.82812208-3.02412837i$\\ \hline
	\end{tabular}
\end{center}\caption{complex volumes of $K_1$ and $K_2$}\label{cx volume}
\end{table}
\end{example}
Note that $cs(K_1)=3\,cs(K_2)+4\pi^2$ when $u=0.8774388331 + 0.7448617666i$ and hence 
$$cs(K_1)=3\,cs(K_2)\quad (\textrm{mod } \pi^2 \mathbb Z).$$ (See Table \ref{cx volume}.)

\section{Epimorphisms between knot groups}  
There is a partial order on the set of prime knots as follows : We write $K_1 \geq K_2$ for two prime knots if there exists an epimorphism from $G(K_1)$ to $G(K_2)$. A knot is called $minimal$ if its knot group admits epimorphisms onto the knot groups of only the trivial knot and itself.

\subsection{ORS-expansion}
Ohtsuki, Riley and Sakuma have constructed in \cite{ORS} systemetically  epimorphisms between 2-bridge knot groups preserving peripheral structure when the two knots have some special continued fraction expansions, and then it can be shown that all epimorphisms between 2-bridge knot groups arise only from those  Ohtsuki-Riley-Sakuma construction 
by the result of Aimi-Lee-Sakai-Sakuma \cite{ALS}. (See \cite{Suzuki} and also \cite{Agol}
.) Therefore non-minimal 2-bridge knots have the following special Conway's normal forms. 
\begin{definition}
We say $K$ has an $ORS$-$expansion$ $of$ $type$ $n$ with respect to ${\bf a}=(a_1, a_2,\cdots,a_m)$ 
 if $K$ can be written as 
$$K=C[\epsilon_1{\bf a}, 2c_1, \epsilon_2 {\bf a^{-1}}, 2c_2, \epsilon_3 {\bf a},2c_3, \epsilon_4 {\bf a^{-1}},2c_4,\cdots, \epsilon_{n-1} {\bf a^{-1}},2c_{n-1}, \epsilon_{n} {\bf a^{(-1)^{n+1}}}]$$ 
where
$${\bf a^{-1}}=(a_m, a_{m-1},\cdots,a_1),\ \epsilon_i=\pm 1\ (\epsilon_1=1), \ c_i \in \mathbb Z.$$
\end{definition}
	\begin{remark}
	Let $K$ be an ORS-expansion of type $n$ with respect to ${\bf a}=(a_1, a_2,\cdots,a_m)$. Then 
	\begin{enumerate}
\item	$K$ is a knot if $n$ is odd and it is a link if $n$ is even. 
\item We can exclude the case $c_i=0,\ \epsilon_i \epsilon_{i+1}=-1$, and this expansion with respect to ${\bf a}$ is unique (see \cite{Suzuki} and \cite{GHS} for details).
\item If $c_i=0$ and $\epsilon_i \epsilon_{i+1}=1$, then we can reduce the length of the expansion by 2. Thus  the resulting expansion after doing all the possible reducing, is the reduced even expansion of $K$, if ${\bf a}$ is a reduced even expansion.  
\item $C[a_m, a_{m-1},\cdots,a_1]$ is equivalent to $C[a_1, a_2,\cdots,a_m]$ if $m$ is odd and it is equivalent to the mirror of $C[a_1, a_2,\cdots,a_m]$ if $m$ is even. It follows from the fact 
the upside-down of $C[a_1, a_2,\cdots,a_m]$ is $C[(-1)^{m+1}a_m, (-1)^{m+1}a_{m-1},\cdots,(-1)^{m+1}a_1].$
\end{enumerate}
\end{remark}

\begin{figure}[hbt]
\begin{center}
\scalebox{0.3}{\includegraphics{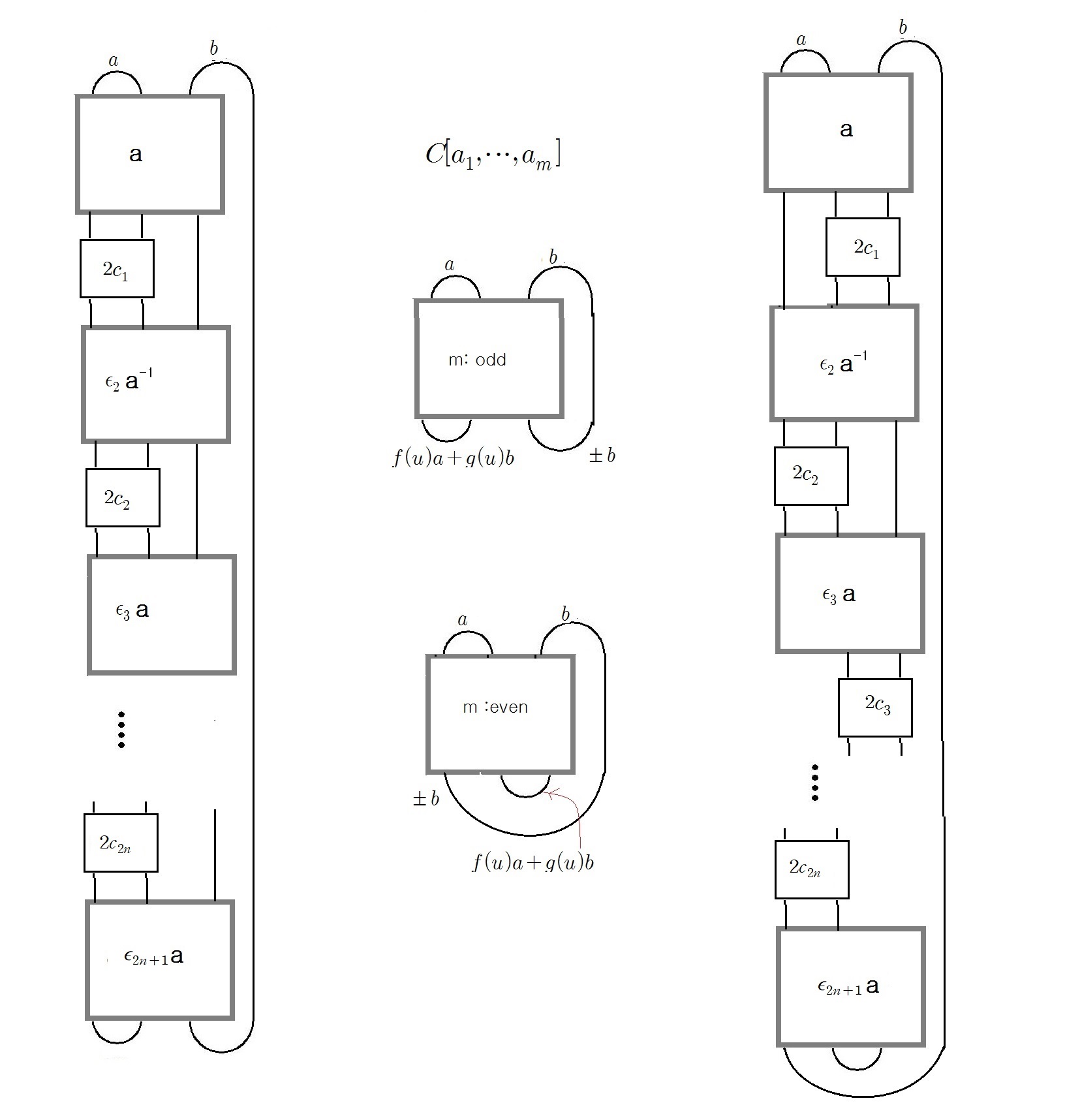}}
\end{center}
\caption{$C[\epsilon_1{\bf a}, 2c_1, \epsilon_2 {\bf a^{-1}}, 2c_2, \epsilon_3 {\bf a},2c_3,\cdots, 2c_{2n}, \epsilon_{2n+1} {\bf a}]$: knots}\label{prop-proof}
\end{figure}

\begin{theorem}\label{expansion}
Let $K$ be a 2-bridge link which has 
an ORS-expansion of $type$ $n$ with respect to ${\bf a}=(a_1, a_2,\cdots,a_m)$. 
Suppose that $(u_1,\cdots,u_m)$ is the $u_i$-sequence of $A=C[a_1, a_2,\cdots,a_m]$.
Then 
\begin{enumerate}
\item [\rm (i)] 
the rep-polynomial $P_K(u)$ of $K$ has  the rep-polynomial $P_A(u)$ of $A$ as a factor if $K$ is a knot, and either $P_K(u)$ or $P_K(iu)$ has $P_A(u)$ as a factor if $K$ is a link. 
\item [\rm (ii)] 
 the $u_i$-sequence of $K$ is as follows  when 
 it is considered in $\mathbb Z[u]/(\frac{1}{u}P_{A}(u))$.
$$(u_1,\cdots,u_m,0,\pm u_m,\cdots,\pm u_1,0,\pm u_1,\cdots,\pm u_m,0,\cdots, 0,\pm u_1,\cdots,\pm u_m)$$
where the sign is not determined.
\end{enumerate}
\end{theorem}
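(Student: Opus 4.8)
The plan is to reduce everything to the ORS-expansion structure together with the key lemmas already established, namely Lemma \ref{KeyL} and its Corollary \ref{KL-coro}, Lemma \ref{K-L2}, and the rotation/reversal identities (Proposition \ref{rotation} and Lemma \ref{reversed knot}). The central idea is that an ORS-expansion is built by concatenating copies of the block $\mathbf{a}$ (alternating with $\mathbf{a^{-1}}$) separated by even blocks $2c_i$, and the coloring on each copy of $\mathbf{a}$ is, up to sign and the substitution $u\mapsto iu$, the \emph{same} coloring as on $A=C[a_1,\dots,a_m]$ itself. So I would first fix a $\mathfrak{C}$-coloring arising from a root $r$ of $P_A(u)$ and track what the arc vectors do as we pass through each successive block.

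First I would analyze a single junction: the transition from one $\mathbf{a}$-block to the next across an even twisting region $2c_i$. The crucial observation is that at the end of the first $\mathbf{a}$-block the two outgoing arc vectors satisfy the defining relation $P_A(r)=0$, which by Proposition \ref{prop-uPolynomial} forces the ``closing'' configuration $\langle a_{m,f},a_{m-1,f}\rangle=0$ (or the analogous even-index equation). By Lemma \ref{K-L2} this in turn forces the outgoing pair to be a scalar ($\pm\epsilon$) multiple of the ingoing bridge pair, so the coloring genuinely \emph{closes up} and the determinant $u_{m+1}$ vanishes in $\mathbb Z[u]/(\tfrac1u P_A(u))$. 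This is exactly why a $0$ appears after each run $u_1,\dots,u_m$ in the claimed $u_i$-sequence. The reversal of block type from $\mathbf{a}$ to $\mathbf{a^{-1}}$ is handled by Lemma \ref{reversed knot}, which gives $\tilde P_i(u)=\pm P_i(u)$, explaining why the subsequent run reads $\pm u_m,\dots,\pm u_1$ (the sequence is read backwards for $\mathbf{a^{-1}}$ up to sign), and the even-block factor $2c_i$ only rescales by a Chebyshev polynomial $p_{c_i}$ that becomes a unit modulo $\tfrac1u P_A(u)$ once the block has closed. Iterating this across all $n$ copies yields the full periodic sign-pattern in part (ii).

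For part (i), once I know that the coloring determined by a root $r$ of $P_A(u)$ propagates consistently through the \emph{entire} ORS-diagram and closes up at the very end (because it closes at every intermediate junction, and the final block is again an $\mathbf{a}$- or $\mathbf{a^{-1}}$-block by the definition of type $n$), I conclude that every root of $P_A(u)$ is also a root of the rep-polynomial of $K$ — so $P_A(u)\mid P_K(u)$ by the correspondence of Proposition \ref{prop-uPolynomial}, in the knot case where the orientations match directly. In the link case the orientation of one strand may be reversed through the expansion, which by Proposition \ref{u-polys} replaces $u$ by $iu$; hence $P_A(u)$ divides $P_K(u)$ or $P_K(iu)$, as stated.

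The main obstacle I anticipate is bookkeeping the signs and the $\epsilon=i$ factors consistently: Lemma \ref{K-L2} produces an ambiguous $\pm\epsilon$ with $\epsilon\in\{1,i\}$ at every closing, and the choice depends on whether strand orientations agree or are reversed at that junction (governed by the $\epsilon_j=\pm1$ in the ORS-expansion and by the parity of the block length $m$, via Remark 7.3(4)). Keeping track of these across $n$ iterations — and verifying that the accumulated factors of $i$ are precisely what turns $P_K(u)$ into $P_K(iu)$ in the link case — is the delicate part. I would organize this by induction on the number of blocks, carrying as inductive hypothesis both the sign-pattern of the $u_i$-sequence so far and the current value of $\epsilon$, and invoking Proposition \ref{rotation} to relate the reversed readings cleanly rather than recomputing each block from scratch.
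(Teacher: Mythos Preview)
Your plan is essentially the paper's own argument: fix a root of $P_A(u)$, propagate the coloring through the ORS diagram block by block, observe that each $\mathbf a$- or $\mathbf a^{-1}$-block closes (via Lemma~\ref{K-L2} / Proposition~\ref{rotation} / Lemma~\ref{reversed knot}), and conclude divisibility. Two small corrections will make the execution cleaner. First, the $2c_i$-block is simpler than you say: once $u_{m+1}\equiv 0$ the two incoming arcs are proportional, so every arc vector in that block is $\pm$ the same vector (concretely $X(0)^{2c_i}=(-1)^{c_i}I$); there is no Chebyshev factor to invert. Second, the paper avoids your feared cascade of $\epsilon=i$ factors by normalizing orientations once at the outset --- it arranges that the orientation of $A$ matches that of the first $\mathbf a$-block in $K$ (reversing one component in the link case), after which all closings occur with $\epsilon=1$ and the only $i$ that ever appears is the single global $u\mapsto iu$ in the link statement.
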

\begin{proof}
We may assume that the orientation of $C[a_1, a_2,\cdots,a_m]$ is the same as the first part's orientation of $K$, by changing the orientation of one component of a link $K$ if necessary, because each link has two rep-polynomials up to its orientation and one is obtained from the other by converting $u$ to $iu$. 

Consider a $\mathfrak C$-coloring on $K$ which starts with two vectors $a, b$ such that $u=\langle a,b \rangle$ is a root of $P_A(u)$. 
Then the last two vectors of  $C[a_1, a_2,\cdots,a_m]$ are $f(u)a+g(u)b, \pm b$  for some polynomials $f(u)$ and $g(u)$, and all the vectors of $2c_1$-block are $\pm (f(u)a+g(u)b)$ by Lemma \ref{KeyL}. (See Figure \ref{prop-proof}.) So the first two vectors of $\epsilon_2 {\bf a^{-1}}$-blocks are $\pm (f(u)a+g(u)b), \pm b$.
We claim that the last two vectors of $\epsilon_2 {\bf a^{-1}}$-blocks are $\pm (a+h_2(u)b), \pm b$ for some $h_2(u)\in\mathbb Z[u]$. If $\epsilon_2=-1$, then these blocks are the horizontally reflected diagram of $\epsilon_1{\bf a}$-block with an reversed orientation and thus all the coloring vectors are also reflected up to sign and especially the last two vectors are $\pm a, \pm b$. If $\epsilon_2=1$, then these blocks are the horizontally half-rotated diagram of $\epsilon_1{\bf a}$-blocks with the reversed orientation 
 and thus by  Lemma \ref{reversed knot}  and Proposition \ref{rotation}
 there is such $h_2(u)\in\mathbb Z[u]$. See Figure \ref{rotation_reversed}.
\begin{figure}[hbt]
\begin{center}
\scalebox{0.3}{\includegraphics{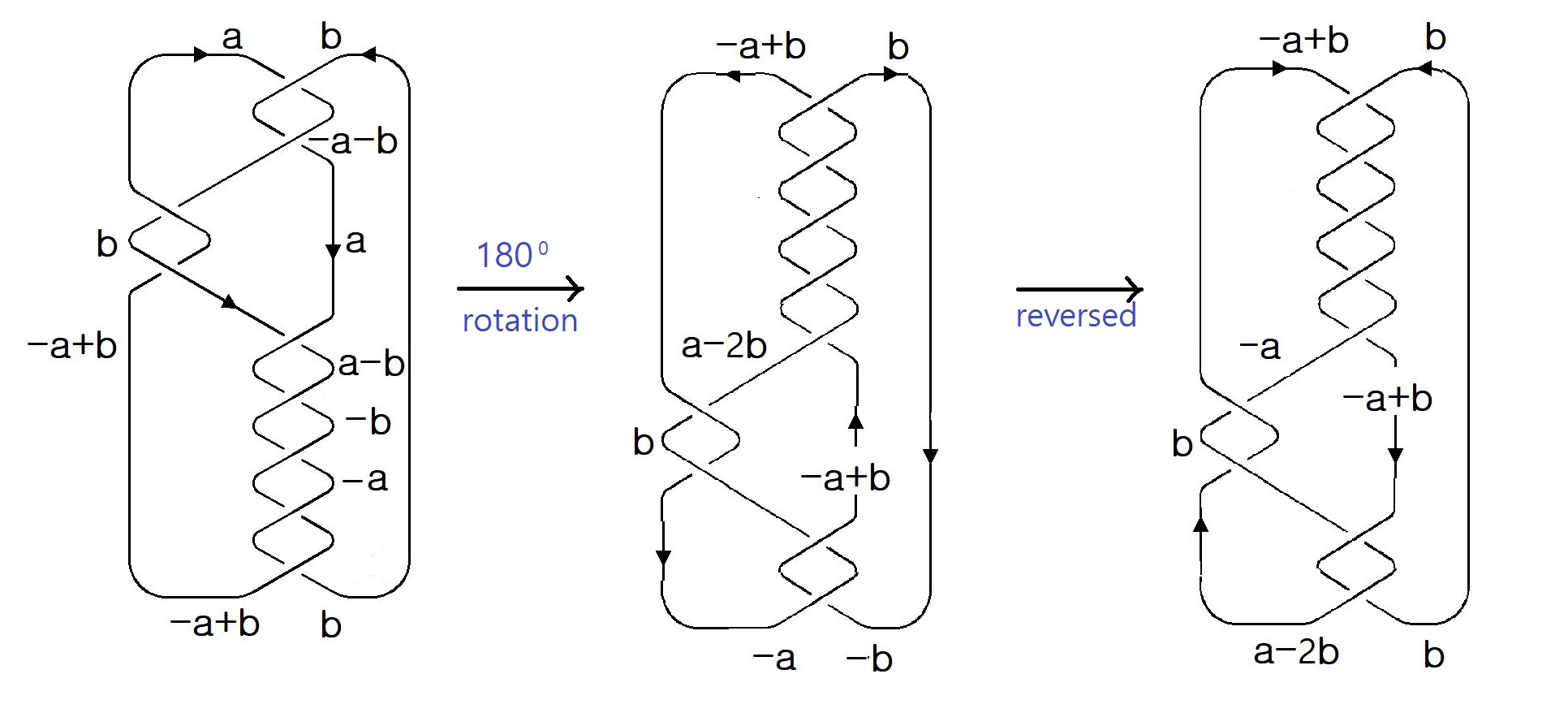}}
\end{center}
\caption{when $\epsilon_2=1$ ; $\langle a,b\rangle=1$}\label{rotation_reversed}
\end{figure}

By Lemma \ref{KeyL} again, all the vectors of $2c_2$-block are $\pm (a+h_2(u))b$ and thus the first two vectors of $\epsilon_3 {\bf a}$-block are $\pm (a+h_2(u))b, \pm b$. 
Since $\langle a+h_2(u)b, b\rangle=\langle a,b\rangle=u$, 
the last two vectors of $\epsilon_3 {\bf a}$-block  must be of the form $f_3(u)a+g_3(u)b, \pm b$ for some $f_3(u), g_3(u)\in \mathbb Z[u]$ satisfying 
$$
u_m(u)=\pm \langle f(u)a+g(u)b, b\rangle = \pm\langle f_3(u)a+g_3(u)b, b\rangle,
$$
(We see that $f_3(u)$ is necessarily equal to either $f(u)$ or $-f(u)$.)

By repeating this process, we can conclude that the last two vectors of the diagram 
$$C[\epsilon_1{\bf a}, 2c_1, \epsilon_2 {\bf a^{-1}}, 2c_2, \epsilon_3 {\bf a},2c_3, \epsilon_4 {\bf a^{-1}},2c_4,\cdots, \epsilon_{2n-1} {\bf a},2c_{2n-1}, \epsilon_{2n} {\bf a^{-1}}]$$  are $\pm (a+h_{2n}(u)b), \pm b$, and the last two vectors of the diagram 
$$C[\epsilon_1{\bf a}, 2c_1, \epsilon_2 {\bf a^{-1}}, 2c_2, \epsilon_3 {\bf a},2c_3, \epsilon_4 {\bf a^{-1}},2c_4,\cdots, \epsilon_{2n} {\bf a^{-1}},2c_{2n}, \epsilon_{2n+1} {\bf a}]$$  are $f_{2n+1}(u)a+g_{2n+1}(u)b, \pm b$ such that 
$$
\langle f(u)a+g(u)b, b\rangle = \pm\langle f_{2n+1}(u)a+g_{2n+1}(u)b, b\rangle
.$$
This implies that   $u$ is also a root of the rep-polynomial of $K$, which 
implies that $P_A(u)$ divides $P_K(u)$. This proves (i).

The statement (ii)  is obvious from Lemma \ref{KeyL}, Lemma \ref{reversed knot}, and Lemma  \ref{rotation}.
\end{proof}

\begin{example}
A link $C[3,2,3]$ and a knot $C[3,2,6]=C[3,2,3,0,3]$ are ORS-expansions of type $2$ and $3$ with respect to ${\bf a}=(3)$. Therefore the rep-polynomial of $C[3,2,6]$ is divided by the rep-polynomial of $C[3]$, $u(u^2-1)$, and so is the rep-polynomial of $C[3,2,3]$  if we choose a suitable orientation.
These are the first 2 diagrams of Figure \ref{PA-diagram}.  
\begin{figure}[hbt]
	\begin{center}
		\scalebox{0.4}{\includegraphics{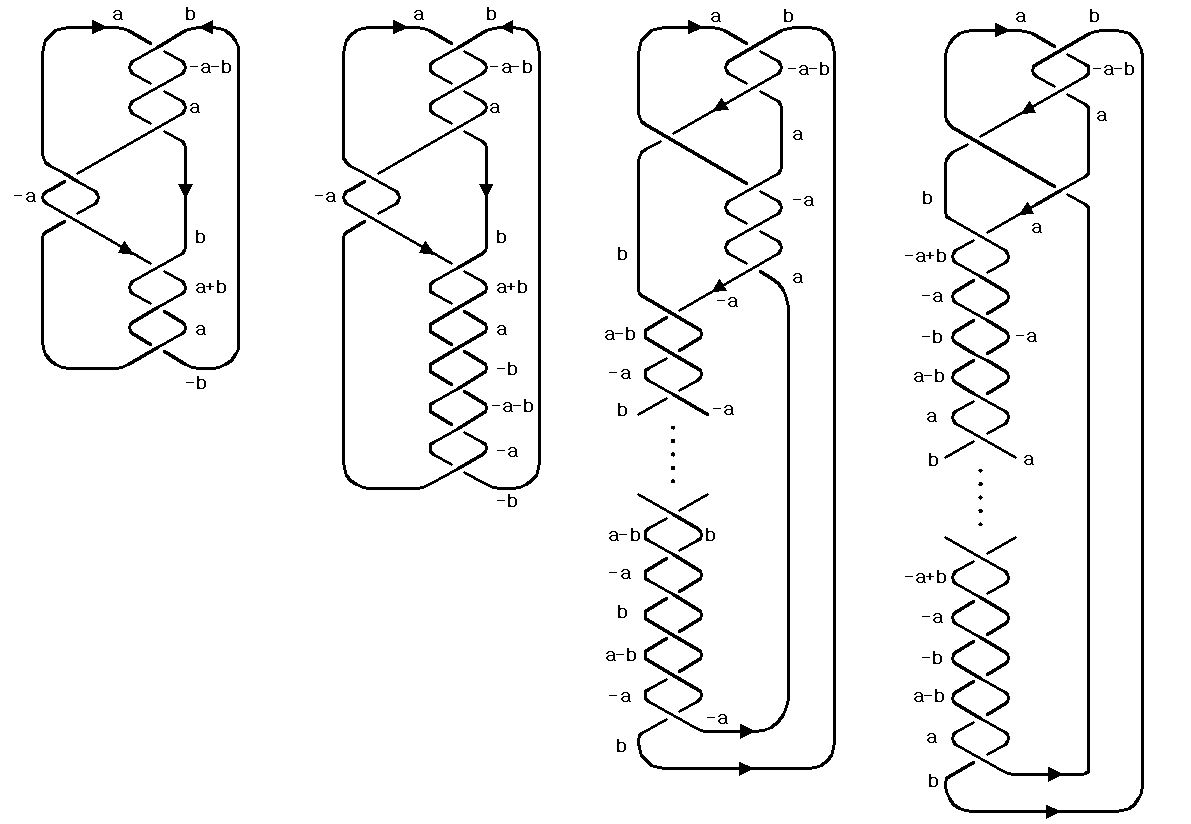}}
	\end{center}
	\caption{Knots or links $\geq 3_1$; $\langle a,b \rangle=1$}\label{PA-diagram}
\end{figure}
We can check that if the orientation of one component of a link $C[3,2,3]$ is reversed, then $iP_{C[3]}(iu)=u(u^2+1)$ divides the rep-polynomial $P_K(u)$ of $K=C[3,2,3]$, or equivalently, $P_{C[3]}(u)$ divides $P_K(iu)$.  (See Figure \ref{PA-Link-diagram}.)
Note that the two rep-polynomials of $K=C[3,2,3]$ is 
$$P_1(u)=u^2(u^2-1)^3(u^2-2)(u^6-3u^4+2u^2+2)(u^8-2u^6+2u^2+1)$$
and
$$P_2(u)=u^2(u^2+1)^3(u^2+2)(u^6+3u^4+2u^2-2)(u^8+2u^6-2u^2+1),$$
and  $u(u^2-1)$ is a factor of $P_1(u)=\pm P_2(iu)$.
\end{example}
\begin{figure}[hbt]
	\begin{center}
		\scalebox{0.5}{\includegraphics{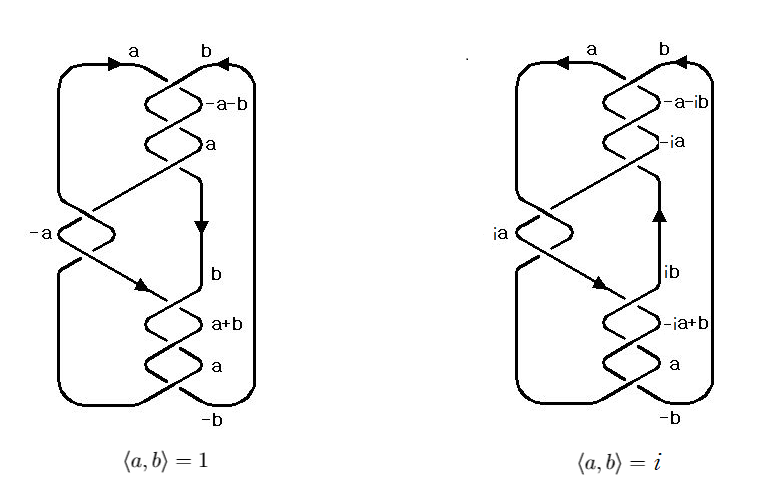}}
	\end{center}
	\caption{$C[3,2,3]$-link $\geq 3_1$}\label{PA-Link-diagram}
\end{figure}
Kitano and Morifuji proved in \cite{KM} that $K_1 \geq K_2$ if the Riley polynomial of $K_2$ divides that of $K_1$. 
Therefore the following Ohtsuki-Riley-Sakuma's result immediately follows from Theorem \ref{expansion}.
\begin{corollary}[Ohtsuki-Riley-Sakuma, \cite{ORS}]\label{ORS-theorem}
Let $K$ be a 2-bridge knot or link which has 
an ORS-expansion of $type$ $n$ with respect to ${\bf a}=(a_1, a_2,\cdots,a_m)$. Then $K\geq A=C[a_1, a_2,\cdots,a_m]$.
\end{corollary}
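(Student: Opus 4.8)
The plan is to read off the corollary from Theorem \ref{expansion}(i) combined with the Kitano--Morifuji criterion quoted just above, which guarantees an epimorphism $G(K)\twoheadrightarrow G(A)$ as soon as the Riley polynomial of $A$ divides that of $K$. Since Theorem \ref{expansion}(i) already delivers a divisibility of \emph{rep}-polynomials, the only real work is to convert this into a divisibility of \emph{Riley} polynomials; once that is in hand, $K\geq A$ follows immediately from the definition of the partial order.

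First I would record the input of Theorem \ref{expansion}(i): because $K$ has an ORS-expansion of type $n$ with respect to ${\bf a}=(a_1,\dots,a_m)$, the rep-polynomial $P_A(u)$ of $A=C[a_1,\dots,a_m]$ divides $P_K(u)$ when $K$ is a knot, and divides $P_K(u)$ or $P_K(iu)$ when $K$ is a link. I would then feed this into the dictionaries (\ref{knot-Riley}) and (\ref{link-Riley}) relating the rep-polynomial to the Riley polynomial, writing $\mathcal R_K$ and $\mathcal R_A$ for the Riley polynomials of $K$ and $A$. In the knot case ($n$ odd) one has $\frac{1}{u}P_K(u)=\pm\mathcal R_K(u^2)$ and $\frac{1}{u}P_A(u)=\pm\mathcal R_A(u^2)$; since $u^2\nmid P_K(u)$ for a knot by Lemma \ref{ufactor}, dividing the relation $P_A(u)\mid P_K(u)$ by the single common factor $u$ gives $\mathcal R_A(u^2)\mid\mathcal R_K(u^2)$ in $\mathbb Z[u]$, and the substitution $y=u^2$ (legitimate because the quotient of two even polynomials is even) upgrades this to $\mathcal R_A(y)\mid\mathcal R_K(y)$. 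Kitano--Morifuji then yields the epimorphism.

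For the link case I would argue identically using (\ref{link-Riley}), cancelling the appropriate power of $u$ from both sides. If the divisor sits inside $P_K(iu)$ rather than $P_K(u)$, I would first reverse the orientation of one component of $K$: by the relation $P_1(iu)=\pm P_2(u)$ between the two rep-polynomials of a link (Theorem \ref{2-rep-poly} and the discussion following it) this replaces $P_K(u)$ by $P_K(iu)$ up to a unit, so that $P_A(u)$ divides the rep-polynomial computed for that orientation, and the Riley divisibility holds for it.

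The step I expect to absorb most of the care is precisely this bookkeeping of the factors of $u$ and of the substitution $u\mapsto iu$. One must verify that the cancellation of $u$ (knots) or $u^2$ (links) is legitimate, i.e. that $P_A$ and $P_K$ carry exactly the multiplicities of $u$ at the origin asserted by Lemma \ref{ufactor} and Corollary \ref{degP(u)}, so that the quotient genuinely records a divisibility of the $\mathcal R$'s and not merely of the $P$'s; the mixed situation where $A$ is a knot but $K$ a link (so one cancels $u$ from $P_A$ but $u^2$ from $P_K$) needs a short check that the extra factor of $u$ lands in the quotient. A secondary point is that the Kitano--Morifuji criterion is phrased for knots; for the genuinely-link case ($n$ even) one invokes the analogous divisibility-implies-epimorphism statement for link groups announced in the introduction, whose hypothesis is exactly the rep-polynomial divisibility furnished by Theorem \ref{expansion}(i).
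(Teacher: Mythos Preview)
Your proposal is correct and follows the same route as the paper: combine Theorem~\ref{expansion}(i) with the Kitano--Morifuji divisibility criterion. The paper's own proof is a single sentence to this effect, leaving implicit the bookkeeping you spell out (cancelling the factor of $u$ or $u^2$ and passing from $P_K(u)$ to $\mathcal R_K(y)$ via (\ref{knot-Riley})--(\ref{link-Riley})); your more careful treatment of those conversions, and your flag that the link case requires an extension of the Kitano--Morifuji statement beyond knots, are points the paper simply glosses over.
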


\subsection{Epimorphisms and Riley polynomials}
In this subsection, we prove the converse statement of the result of Kitano and Morifuji.
To do this, we need to prove the following.  
\begin{theorem}\label{KM}Let  $K_1$ and $K_2$ be 2-bridge knots. Then the followings are equivalent.
\begin{enumerate}
		\item [\rm (i)] $K_1\geq K_2$. 
        \item [\rm (ii)] $P_{K_2}(u)$  divides either  $P_{K_1}(u)$ or $P'_{K_1}(u)$.
        \end{enumerate}
In the case that  $K_1\geq K_2$ and $K_1$ is a link ($K_2$ could be a knot or link),  $P_{K_2}(u)$  divides one of the four rep-polynomials of $K_1$, $P_{K_1}(u),  P_{K_1}(iu), P'_{K_1}(u), P'_{K_1}(iu).$ 
\end{theorem}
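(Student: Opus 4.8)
The plan is to combine three facts already in hand: the Ohtsuki--Riley--Sakuma / Aimi--Lee--Sakai--Sakuma classification, which says every epimorphism between $2$-bridge knot groups arises from an ORS-expansion; Theorem \ref{expansion}, which converts an ORS-expansion into a rep-polynomial divisibility; and the Kitano--Morifuji criterion \cite{KM}, which converts a Riley-polynomial divisibility into an epimorphism. The upside-down symmetry of Theorem \ref{2-rep-poly} will be the device that reconciles the two rep-polynomials $P_{K_2}$ and $P'_{K_2}$. For the direction (ii) $\Rightarrow$ (i), I would first treat the case $P_{K_2}(u)\mid P_{K_1}(u)$: writing $P_{K_i}(u)=u\,R_i(u)$ with $R_i(u)=\tfrac1u P_{K_i}(u)=\pm\mathcal R_i(u^2)$ an even polynomial by (\ref{knot-Riley}), the quotient $Q=R_1/R_2$ is again even, since $Q(u)=R_1(u)/R_2(u)=R_1(-u)/R_2(-u)=Q(-u)$; hence $\mathcal R_{K_2}(y)\mid\mathcal R_{K_1}(y)$ and \cite{KM} gives $K_1\ge K_2$. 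The case $P_{K_2}(u)\mid P'_{K_1}(u)$ is identical after replacing $K_1$ by its upside-down diagram, which represents the same knot and carries rep-polynomial $P'_{K_1}$ by Theorem \ref{2-rep-poly}(ii).

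For (i) $\Rightarrow$ (ii), I would invoke the classification of \cite{ORS,ALS} (see also \cite{Suzuki}) to force $K_1$ to admit an ORS-expansion with respect to some $\mathbf a=(a_1,\dots,a_m)$ whose inner diagram $A=C[a_1,\dots,a_m]$ equals $K_2$. Theorem \ref{expansion}(i) then yields $P_A(u)\mid P_{K_1}(u)$, and since $C[a_1,\dots,a_m]$ is one specific Conway diagram of the knot $K_2$, Theorem \ref{2-rep-poly} gives $P_A\in\{P_{K_2},P'_{K_2}\}$. If $P_A=P_{K_2}$ we are finished. If instead $P_A=P'_{K_2}$, I would pass to the upside-down of the entire ORS-expansion: by the reversal rule $C[(-1)^{k+1}n_k,\dots,(-1)^{k+1}n_1]$ this is again an ORS-expansion whose inner diagram is the upside-down of $A$, hence has rep-polynomial $P'_A=P_{K_2}$, while the ambient diagram now carries $P'_{K_1}$; Theorem \ref{expansion}(i) applied here gives $P_{K_2}(u)\mid P'_{K_1}(u)$. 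In either case (ii) holds.

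For the final statement, $K_1$ is a link and the same scheme runs with two modifications. First, the link $K_1$ now carries the four rep-polynomials $P_{K_1}(u),P_{K_1}(iu),P'_{K_1}(u),P'_{K_1}(iu)$, obtained by combining the upside-down operation with the substitution $u\mapsto iu$ coming from reversing one component (Theorem \ref{2-rep-poly}). Second, Theorem \ref{expansion}(i) for links only guarantees that $P_A$ divides $P_{K_1}(u)$ or $P_{K_1}(iu)$. Running the resulting four-way case analysis---$P_A$ equal to $P_{K_2}$ or $P'_{K_2}$, landing in the $u$- or $iu$-normalization, with the upside-down trick supplying the primed versions---exhibits $P_{K_2}(u)$ as a divisor of exactly one of the four listed polynomials.

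The step I expect to be the main obstacle is the bookkeeping in (i) $\Rightarrow$ (ii): confirming that taking the upside-down of the ambient ORS-expansion sends the inner sub-diagram $A$ precisely to its own upside-down, so that $P_A\mapsto P'_A$, rather than to some unrelated block. This hinges on the palindromic block structure $\epsilon_i\mathbf a^{\pm1}$ of the expansion together with the explicit reversal rule for upside-down diagrams, and I would verify directly---using the mirror-invariance $P_{\bar K}=P_K$ of Proposition \ref{u-polys} to absorb the $(-1)^{k+1}$ sign factors---that reversing and re-signing the block sequence turns the first block into the upside-down of $A$ up to the moves permissible in an ORS-expansion. A secondary point to settle is that the classification of epimorphisms and the Kitano--Morifuji criterion remain applicable verbatim when $K_1$ (or $K_2$) is a link rather than a knot, which is exactly what the final statement silently requires.
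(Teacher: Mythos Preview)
Your proposal is correct and follows the same strategy as the paper: (ii)$\Rightarrow$(i) via the Kitano--Morifuji criterion after passing from rep-polynomials to Riley polynomials, and (i)$\Rightarrow$(ii) via the ORS/ALS classification combined with Theorem~\ref{expansion}. The only difference is that the paper cites the classification in the form ``$K_1$ admits an ORS-expansion with respect to \emph{any} Conway form $\mathbf a$ of $K_2$,'' which lets one arrange $P_{C[\mathbf a]}=P_{K_2}$ at the outset and thereby bypass your upside-down case analysis---but your detour is a valid way to reach the same conclusion.
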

\begin{proof} 
Let $K_2=C[a_1,\cdots,a_m]$ and ${\bf a}=(a_1,\cdots,a_m)$.

 (ii) $\Rightarrow$ (i) :
Since if the rep-polynomial of $K_2=C[a_1,\cdots,a_m]$ is a factor of a rep-polynomial of $K_1=C[n_1,\cdots,n_k]$ then the Riley polynomial of $K_2=S(\alpha_2, \beta_2)$ is a factor of the Riley polynomial of $K_1=S(\alpha_1, \beta_1)$ with $\frac{\beta_1}{\alpha_1}=[n_1,\cdots,n_k]$ and $\frac{\beta_2}{\alpha_2}=[a_1,\cdots,a_m]$, 
  there exists an epimorphism from   $G(K_1)$ to $G(K_2)$ by the result of 
Kitano and Morifuji, Theorem 3.1  in \cite{KM}.

(i) $\Rightarrow$ (ii) :
If we assume that there exists an epimorphism from  $G(K_1)$ to $G(K_2)$, then $K_1$ has an ORS-expansion of type $2n+1$ with respect to any Conway's normal form $K_2=C[\bf a]$, that is, 
$$K_1=C[\epsilon_1{\bf a}, 2c_1, \epsilon_2 {\bf a^{-1}}, 2c_2, \epsilon_3 {\bf a},2c_3, \epsilon_4 {\bf a^{-1}},2c_4,\cdots, \epsilon_{2n} {\bf a^{-1}},2c_{2n}, \epsilon_{2n+1} {\bf a}]$$ 
where
$${\bf a^{-1}}=(a_m, a_{m-1},\cdots,a_1),\ \epsilon_i=\pm 1 (\epsilon_1=1),\ c_i \in \mathbb Z$$ 
by \cite{ALS} and 
\cite{ORS} (see also Theorem 3.1 of \cite{Suzuki}). 	Let $$ {\bf b}=(\epsilon_1{\bf a}, 2c_1, \epsilon_2 {\bf a^{-1}}, 2c_2, \epsilon_3 {\bf a},2c_3, \epsilon_4 {\bf a^{-1}},2c_4,\cdots, \epsilon_{2n} {\bf a^{-1}},2c_{2n}, \epsilon_{2n+1} {\bf a}).$$
Then  $P_{C[\bf a]}(u)$  divides $P_{C[\bf b]}(u)$ by  Theorem \ref{expansion}, which implies that  (ii) holds because $P_{C[\bf b]}(u)$ is either   $P_{K_1}(u)$ or $P'_{C[\bf b]}(u)$. 

The last statement is similarly proved.
 \end{proof}

\begin{corollary}\label{Epi-uPoly}Let  $K_1=S(\alpha,\beta)$ and $K_2=S(\alpha',\beta')$. Then $K_1\geq K_2$ if and only if the Riley polynomial of $S(\alpha',\beta')$ divides  the Riley polynomial of either  $S(\alpha,\beta)$ or $S(\alpha,\beta'')$, where $\beta\beta''\equiv\pm 1\, (\emph{mod}\ \alpha)$.
\end{corollary}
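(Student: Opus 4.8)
The plan is to derive this directly from Theorem~\ref{KM} by converting the divisibility statements about rep-polynomials into divisibility statements about the classical Riley polynomials. By Theorem~\ref{KM}, for the two $2$-bridge knots $K_1=S(\alpha,\beta)$ and $K_2=S(\alpha',\beta')$ we have $K_1\geq K_2$ if and only if $P_{K_2}(u)$ divides $P_{K_1}(u)$ or $P'_{K_1}(u)$. So the whole task is to rephrase each of these two divisibilities in terms of $\mathcal{R}$ with $y=u^2$, and then to match the result against the statement of the corollary.

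First I would use the knot relation (\ref{knot-Riley}), namely $\tfrac{1}{u}P_{S(\alpha,\beta)}(u)=\pm\mathcal{R}(u^2)$ together with $u^2\nmid P_{S(\alpha,\beta)}(u)$ from Lemma~\ref{ufactor}, so that both $P_{K_1}(u)$ and $P_{K_2}(u)$ are $u$ times an even polynomial in $u$. The key elementary step is then the equivalence $P_{K_2}(u)\mid P_{K_1}(u)$ if and only if the Riley polynomial of $K_2$ divides that of $K_1$. For the forward direction I would cancel the common factor $u$ to obtain $\mathcal{R}_{K_2}(u^2)\mid\mathcal{R}_{K_1}(u^2)$, observe that the quotient is a ratio of two even polynomials and is therefore itself even, hence a genuine polynomial in $y=u^2$; this produces the divisibility $\mathcal{R}_{K_2}(y)\mid\mathcal{R}_{K_1}(y)$. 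The converse is immediate by substituting $y=u^2$ and multiplying by $u$. This reproduces the first alternative of the corollary, divisibility into the Riley polynomial of $S(\alpha,\beta)$.

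Next I would identify $P'_{K_1}(u)$ geometrically. Writing $K_1=C[n_1,\dots,n_k]=S(\alpha,\beta)$, Theorem~\ref{2-rep-poly}(ii) tells us that $P'_{K_1}(u)$ is the rep-polynomial of the upside-down diagram $C[(-1)^{k+1}n_k,\dots,(-1)^{k+1}n_1]$, whose slope is $\tfrac{\beta''}{\alpha}$ with $\beta\beta''\equiv 1\ (\mathrm{mod}\ \alpha)$ by the continued-fraction identity recorded in Section~2. Hence $P'_{K_1}(u)=P_{S(\alpha,\beta'')}(u)$, and the same cancellation argument gives $P_{K_2}(u)\mid P'_{K_1}(u)$ if and only if the Riley polynomial of $K_2$ divides that of $S(\alpha,\beta'')$. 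To account for the sign ambiguity $\beta\beta''\equiv\pm 1$, I would note that replacing $\beta''$ by $-\beta''$ replaces $S(\alpha,\beta'')$ by its mirror, and by the mirror-invariance of rep-polynomials in Proposition~\ref{u-polys} the corresponding Riley polynomials agree up to sign; divisibility is insensitive to this, so either residue $\beta\beta''\equiv\pm 1$ may be used. Combining the two alternatives with Theorem~\ref{KM} yields the stated equivalence.

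I expect the only genuinely delicate point to be the even-quotient step of the cancellation argument, together with keeping the global sign factors $\pm$ in (\ref{knot-Riley}) and in Proposition~\ref{u-polys} consistent; once those are handled, everything else is a direct bookkeeping translation between the $u$-variable and the Riley variable $y=u^2$, with the real content already carried by Theorem~\ref{KM} and the ORS/ALS machinery underlying it.
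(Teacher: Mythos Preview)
Your proposal is correct and follows essentially the same route as the paper: deduce the corollary from Theorem~\ref{KM} by identifying $P'_{K_1}$ with the rep-polynomial of the upside-down diagram, whose Schubert fraction is $\tfrac{\beta''}{\alpha}$ with $\beta\beta''\equiv\pm1\ (\mathrm{mod}\ \alpha)$, and then translate rep-polynomial divisibility into Riley-polynomial divisibility via (\ref{knot-Riley}). The paper's own proof is a one-line appeal to the continued-fraction identity for the reversed sequence and leaves the $u$-to-$y$ conversion implicit, whereas you spell out the even-quotient argument; both are the same idea.
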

\begin{proof} 
This follows from that if $\frac{\beta}{\alpha}=[n_1,n_2,\cdots,n_k]$ and $\frac{\beta''}{\alpha''}=[n_k,n_{k-1},\cdots,n_1]$, then  $\alpha=\alpha''$ and $\beta\beta''\equiv (-1)^{k-1}$ $(\text{mod}\ \alpha)$.
\end{proof}

\begin{example} We have seen in Example \ref{C[3]-knot} and  Example \ref{C[2,2,5]-knot} that  the rep-polynomial of  $K_1=C[2,2,5]$ and the trefoil $K_2=C[3]$ are 
$$P_{K_1}(u)=u(u-1)(u+1)h(u)h(-u)$$ 
and
$$P_{K_2}=u(u-1)(u+1)$$ respectively, 
where $$h(u)=u^{12}-2u^{10}+u^9+4u^8-u^7-3u^6+3u^5+3u^4-u^3-u^2+2u+1.$$  Therefore $K_1\geq 3_1$ and $K_1$ has an ORS-expansion of type $3$ with respect to ${\bf a}=(3)$, and also has an ORS-expansion of type $3$ with respect to ${\bf a}=(2,-2)$. Actually,  
\begin{equation*}
\begin{split}
K_1&=C[2,2,5]=C[3,-2,6]=C[3,-2,3,0,3]\\
&=C[2,-4,2,-2,2,-2]=C[2,-2,0,-2,2,-2,2,-2].
\end{split}
\end{equation*}
The upside-down diagram of $C[2,2,5]$ is
$$C[5,2,2]=C[6,-2,3]=C[3,0,3,-2,3]\\
=C[2,-2,2,-2,2,0,2,-2],$$
and its rep-polynomial is
$u(u-1)(u+1)\tilde{h}(u)\tilde{h}(-u)$, 
where $$\tilde{h}(u)=u^{12}+2u^{11}-6u^{10}-13u^9+10u^8+27u^7-u^6-19u^5-7u^4+3u^3+5u^2+4u+1.$$
Note that $h(u)\neq \tilde{h}(u)$, but they give the same trace field by Corollary \ref{trace-field}.
\end{example}

\begin{remark}\label{KM-C}
It is an immediate consequence of Theorem \ref{KM} that  
a 2-bridge knot $K$ is minimal if  $K$ has an irreducible Riley polynomial. 
But the converse statement of this is not always true. For example, the double twist knot  $J(4,4)$ is minimal (see Proposition 3.1 of \cite{NST}), but its rep-polynomial is 
$$P_{K}(u)=u(u^3+2u+1)(u^3+2u-1)(u^4+u^3+2u^2+2u+1)(u^4-u^3+2u^2-2u+1), $$ which implies that the Riley polynomial of $J(4,4)$ has two irreducible factors. 
\end{remark}

\appendix
\section{Torus knots and links}\label{EX}  

\subsection{Torus knots}\label{T-Knot}
To compute the rep-polynomial of the torus knot $K=T(2,2k+1)$, we start with two vectors $a_0,b_0$ with  $\langle a_0,b_0\rangle=u$. Then the vectors at the $i$-th step is calculated as follows. (See Figure \ref{TorusLinks}.)
\begin{equation*}
(a_{i},b_{i})=(a_0,b_0)X(u)^{i}=(a_0,b_0)\begin{pmatrix}
-p_{i-1}(-u) & -p_{i}(-u)\\
p_{i}(-u) & p_{i+1}(-u) 
 \end{pmatrix}
\end{equation*}

From this we get the last vectors 
$$a_{2k+1}=-p_{2k}(-u)a_1+p_{2k+1}(-u) b_1$$ and
$$b_{2k+1}=-p_{2k+1}(-u)a_1+p_{2k+2}(-u) b_1.$$
Therefore 
$$P_K(u)=\pm \langle a_0,a_{2k+1}\rangle=\pm up_{2k+1}(-u)=\pm\langle b_0,b_{2k+1}\rangle$$
and thus the solutions of $p_{2k+1}(u)=0$ give non-abelian parabolic representations of $K=T(2,2k+1)$.
By Lemma \ref{cheby-prop}, this equation can be rewritten using $f_n(u):=p_{n+1}(u)-p_n(u)$ as follows.
\begin{equation*}
\begin{split}
p_{2k+1}(u)&=p_{k+1}^2(u)-p_{k}^2(u)\\
&=(p_{k+1}(u)+p_{k}(u))(p_{k+1}(u)-p_{k}(u))\\
&=(-1)^k f_k(u)f_k(-u)=f_k(u)\hat{f}_k(u)\\
&=(-1)^k f_k(2-u^2)=\hat{f}_k(u^2-2).
\end{split}
\end{equation*}
So there is a 1-1 correspondence between the roots of $f_k(u)$ and the  non-abelian parabolic representations of $K=T(2,2k+1)$, and the Riley polynomial is $\pm f_k(2-y)$.

\begin{figure}[hbt]
	\begin{center}
		\scalebox{0.3}{\includegraphics{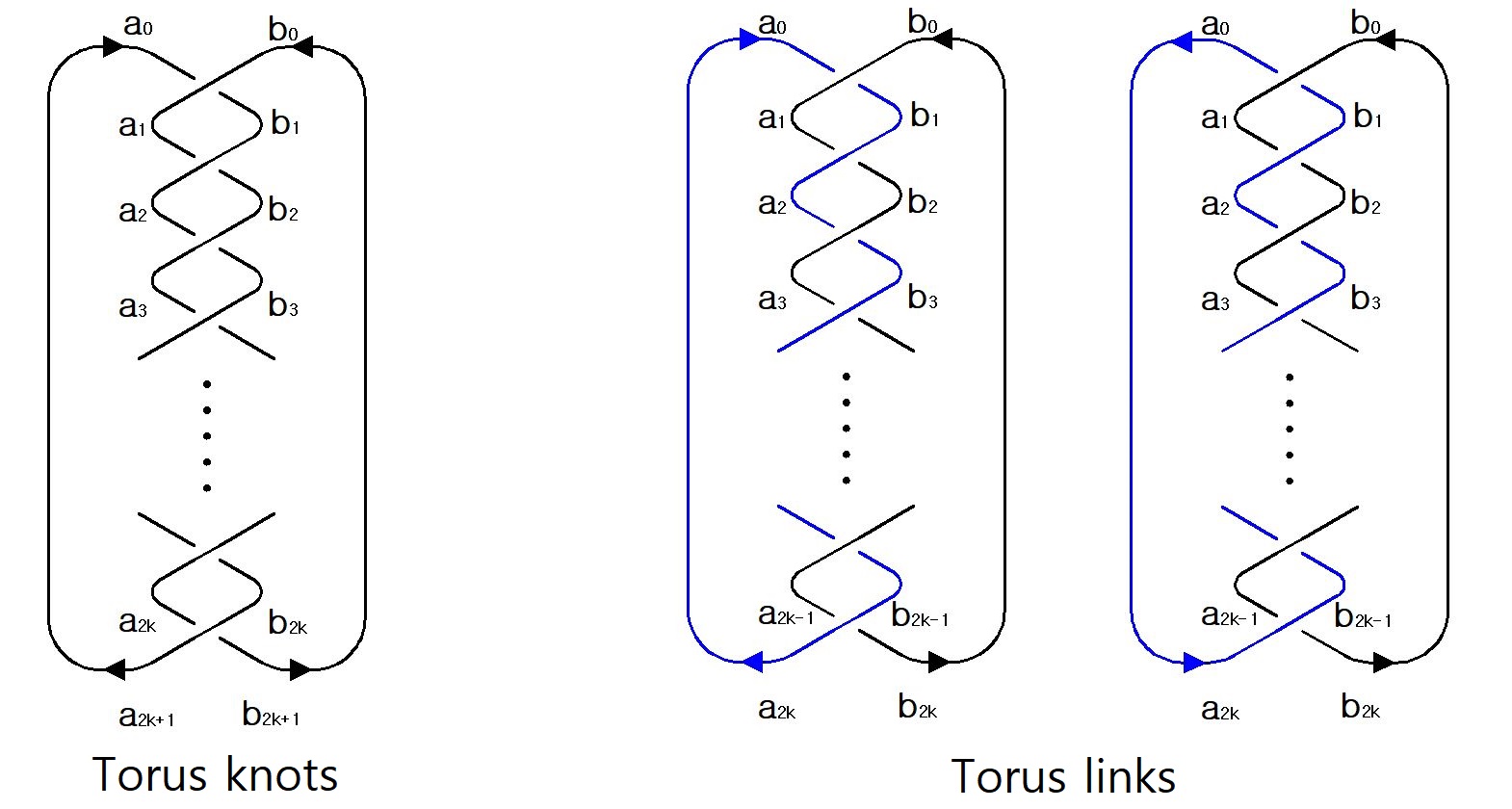}}
	\end{center}
	\caption{Torus knots and Torus links }\label{TorusLinks}
\end{figure}

\subsection{Torus links : $K=T(2,2k)$}
\begin{itemize}
\item Case 1 : The left diagram of Figure \ref{TorusLinks}.

From
\begin{equation*}
(a_{2k},b_{2k})=(a_0,b_0)X(u)^{2k}=(a_0,b_0)\begin{pmatrix}
-p_{2k-1}(-u) & -p_{2k}(-u)\\
p_{2k}(-u) & p_{2k+1}(-u) 
 \end{pmatrix},
\end{equation*}
 we get the last two vectors 
$$a_{2k}=-p_{2k-1}(-u)a_0+p_{2k}(-u) b_0$$ and
$$b_{2k}=-p_{2k}(-u)a_0+p_{2k+1}(-u) b_0.$$
Therefore 
$$P_K(u)=\pm\langle a_0,a_{2k}\rangle=\pm up_{2k}(-u)=\pm\langle b_0,b_{2k}\rangle$$
Note that $u$ is a factor of $p_{2k}(-u)$ and each root of $$\frac{1}{u^2}P_K(u)=\frac{1}{u}p_{2k}(u)=0$$ gives a non-abelian parabolic representation of $K=T(2,2k)$.
Since $p_2(u)=u$, there is no non-abelian parabolic representation for a link $K=T(2,2)$.

\item Case 2 : The right diagram of Figure \ref{TorusLinks}.

If we let $t=-(2+u^2)$, then 
\begin{equation*}
\begin{split}
(a_{2k},b_{2k})&=(a_0,b_0)
(X(u)X(-u))^{k}\\
&=(a_0,b_0)\begin{pmatrix}
-(p_{k-1}(t)+ p_{k}(t))& -up_{k}(t)\\
-up_{k}(t) & (p_{k}(t)+ p_{k+1}(t))
 \end{pmatrix},\\
\end{split}
\end{equation*}
and thus we get the last vectors 
$$a_{2k}=-(p_{k-1}(t)+ p_{k}(t))a_0-up_{k}(t)b_0$$ and
$$b_{2k}= -up_{k}(t)a_0+(p_{k}(t)+ p_{k+1}(t))b_0.$$
Therefore 
$$P_K(u)=\pm\langle a_0,a_{2k}\rangle=\pm u^2p_{k}(t)=\pm u^2p_{k}(-2-u^2)=\pm \langle b_0,b_{2k}\rangle$$
Note that  $p_{k}(-2)=(-1)^{k+1}k\neq 0$ by (ii) of  Lemma \ref{cheby-prop} and this implies that $u$ is not a factor of $p_{k}(-2-u^2)$  and thus all the solutions of $\frac{P_K(u)}{u^2}=0$ give non-abelian parabolic representations of $K=T(2,2k)$.
Since $p_1(-2-u^2)=1$, we can check again that  there is no non-abelian parabolic representation for a link $K=T(2,2)$.
\end{itemize}
\begin{remark}
\begin{enumerate}
\item [(i)] From the above result we see that every torus link has non-abelian parabolic representations if it is not the Hopf link. For example, $K=T(2,4)$ has 2 non-abelian parabolic representations up to conjugate, which correspond to the roots of the equation $u^2 =\pm 2$. Here $u^2-2=\frac{1}{u}p_{4}(u)$ is from Case 1 and $u^2+2= -p_{2}(-2-u^2)$  is from Case 2.
\item [(ii)]
Since  $p_{mn}(u)=p_m(v_n(u))p_n(u), \forall m,n \in \mathbb Z$ by (vii) of Lemma \ref{cheby-prop}, we get 
 $$p_{2k}(u)=p_k(v_2(u))p_2(u)=up_k(v_2(u))$$ and this implies the following: If we denote the two rep-polynomial of $K$ by $P_1(u)=up_{2k}(u)$ and $P_2(u)=u^2p_k(-2-u^2)$, then 
 $$P_2(iu)=-u^2p_k(-2+u^2)=-u^2p_k(v_2(u))=-up_{2k}(u)=-P_1(u)$$
and 
$$P_1(iu)=iup_{2k}(iu)=iu(iu)p_k(v_2(iu))=-u^2p_k(-u^2-2)=-P_2(u).$$  
\end{enumerate}
\end{remark}

\begin{remark}
For each oriented Conway diagram $C[n_1, \cdots, n_k]$ of a  2-bridge link $K$, we can compute easily the rep-polynomial of $K$ 
applying the same procedure to each block as done in the torus link. That is, we get the last two vectors of the $i$-th block, $\epsilon _i a_{i,f}, \epsilon'_i b_{i,f}$, 
multiplying the matrix 
$\begin{pmatrix}
-p_{n_i-1}(-u) & -p_{n_i}(-u)\\
p_{n_i}(-u) & p_{n_i+1}(-u) 
 \end{pmatrix}$ 
to the first two vectors of the $i$-th block, $\epsilon _i a_{i,0}, \epsilon' _i b_{i,0}$. Here $\epsilon _i$ and $\epsilon '_i$ are either $1$ or $i$ depending upon the orientation of the arcs.
\end{remark}

\section{$J(2n_1,2n_2,\cdots,2n_m)$-knots and links}\label{EvenExpansion}  
Each 2-bridge link $K$ has an even expansion $J(2n_1,2n_2,\cdots,2n_m)$, where  $m$ is even if $K$ is a knot and $m$ is odd if $K$ is a link. In this section, we compute the rep-polynomial of such diagrams. 
Using the even expansions, we don't need to consider $\epsilon _i, \epsilon' _i$ for each block and thus we have an explicit formula for the matrix to be multiplied at each block only depending upon whether $i$ is even or odd, that is, whether the block is on the left line or on the  right line.
\begin{figure}[hbt]
	\begin{center}
		\scalebox{0.4}{\includegraphics{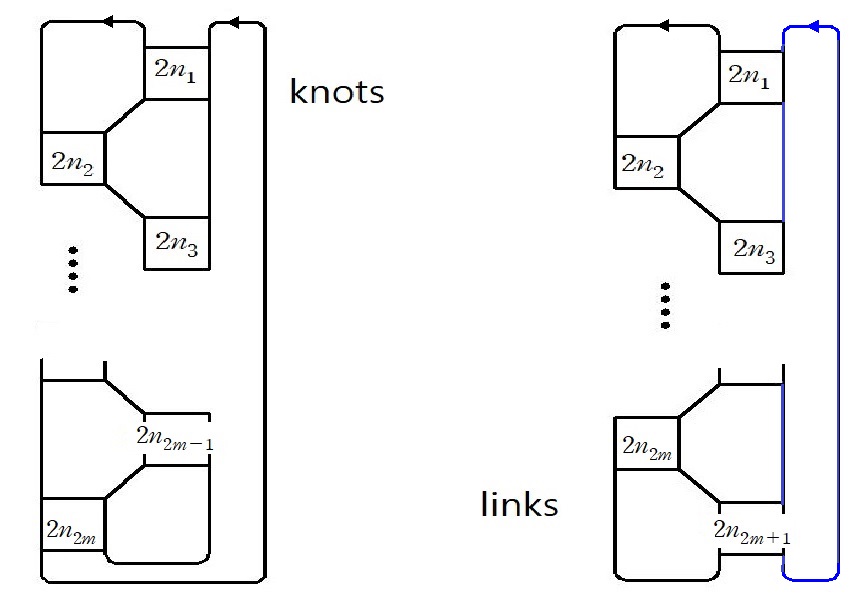}}
	\end{center}
	\caption{even expansion diagram}\label{Even-Exp}
\end{figure}

From our convention of indexing in the diagram, 
$$a_{2,0}=a_{1,0},\ b_{2,0}=a_{1,f}$$ and  for $k\geq 1$ 
$$ a_{2k+1,0}=b_{2k,f},\  b_{2k+1,0}=b_{2k-1,f},\ a_{2k+2,0}=a_{2k,f},\  b_{2k+2,0}=a_{2k+1,f}.$$
If we give an orientation on this diagram, then all the  blocks on the right lines have the same orientation types and the same is true for all the  blocks on the left lines. For example, if the orientation of $b_{1,0}$ is given as in Figure \ref{Even-Exp}, then the orientation of each block will be as in Figure \ref{Odd-Even}.
\begin{figure}[hbt]
	\begin{center}
		\scalebox{0.5}{\includegraphics{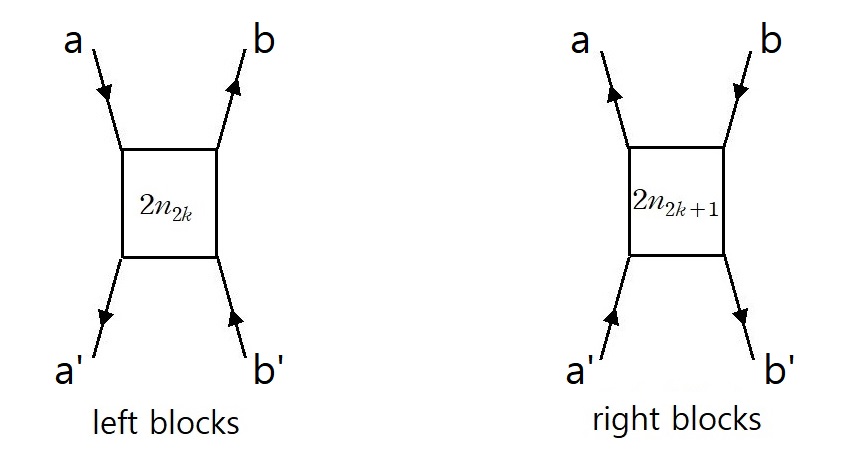}}
	\end{center}
	\caption{}\label{Odd-Even}
\end{figure}

Therefore we can compute arc vectors in each block as follows.
\begin{enumerate}
\item If $a=a_{2k,0},\ b=b_{2k,0}$ and $a'=a_{2k,f},\ b'=b_{2k,f}$, then we get
\begin{equation*}
	\begin{split}
		(a',b')=(a,b)(X(-u)X(u))^n&=(a,b)\begin{pmatrix}
			-(p_{n-1}(t)+ p_{n}(t))& up_{n}(t)\\
			up_{n}(t) & (p_{n}(t)+ p_{n+1}(t))
		\end{pmatrix}\\
         &=(a,b)\begin{pmatrix}
			(-1)^nf_{n-1}(-t))& up_{n}(t)\\
			up_{n}(t) & (-1)^nf_{n}(-t))
		\end{pmatrix}\\
		\end{split}
\end{equation*}
where $u=u_{2k},\ t=t_{2k}=-2-u_{2k}^2,\ n=n_{2k}$.
\item If $a=a_{2k+1,0},\ b=b_{2k+1,0}$ and $a'=a_{2k+1,f},\ b'=b_{2k+1,f}$ and $k>0$, then we get 
\begin{equation*}
	\begin{split}
		(a',b')=(a,b)(X(u)X(-u))^n&=(a,b)\begin{pmatrix}
			-(p_{n-1}(t)+ p_{n}(t))& -up_{n}(t)\\
			-up_{n}(t) & (p_{n}(t)+ p_{n+1}(t))
		\end{pmatrix}\\
         &=(a,b)\begin{pmatrix}
			(-1)^nf_{n-1}(-t))& -up_{n}(t)\\
			-up_{n}(t) & (-1)^nf_{n}(-t))
		\end{pmatrix}\\
		\end{split}
\end{equation*}
where $u=u_{2k+1},\ t=t_{2k+1}=-2-u_{2k+1}^2,\ n=n_{2k+1}$.
\end{enumerate}
By the above process, we obtain the last two vectors $a_{m,f}, b_{m,f}$ and then we get the rep-polynomial $P_K(u)$ of $K$ as follows. If $K$ is a knot, which is the case when $m$ is even, 
$P_K(u)=\pm\langle a_{m,f}, b_{1,0} \rangle$, and $P_K(u)=\pm\langle b_{m,f}, b_{1,0} \rangle$ if $K$ is a link.

\end{document}